\documentclass{amsart}
\usepackage{amsmath,amssymb,amsthm,amscd,latexsym}
\usepackage[dvipdfmx]{graphicx}
\usepackage[all]{xy}
\usepackage{wrapfig}
\usepackage{color}
\usepackage{enumerate}
\usepackage{cases}
\usepackage[top=30mm,bottom=30mm,left=30mm,right=30mm]{geometry}
\usepackage{graphics}
\usepackage{tikz}
\usetikzlibrary{patterns}
\usepackage{pxpgfmark}
\usepackage{multirow}
\usepackage{comment}
\usetikzlibrary{intersections, calc,decorations.markings}

\numberwithin{equation}{section}
\theoremstyle{definition}
\newtheorem{example}{Example}[section]
\newtheorem{definition}[example]{Definition}

\newtheorem{remark}[example]{Remark}

\theoremstyle{plain}
\newtheorem{lemma}[example]{Lemma}
\newtheorem{theorem}[example]{Theorem}
\newtheorem{proposition}[example]{Proposition}
\newtheorem{corollary}[example]{Corollary}

\renewcommand{\mod}{{\rm mod}}
\renewcommand{\dim}{{\rm dim}}

\DeclareMathOperator{\udim}{\underline{\rm dim}}
\DeclareMathOperator{\End}{{\rm End}}
\DeclareMathOperator{\Hom}{{\rm Hom}}

\DeclareMathOperator{\rigid}{{\rm rigid}}
\DeclareMathOperator{\irigid}{{\rm irigid}}

\DeclareMathOperator{\clv}{{\rm cl-var}}
\DeclareMathOperator{\clus}{{\rm cluster}}
\DeclareMathOperator{\Int}{{\rm Int}}

\DeclareMathOperator{\bA}{\mathbb{A}}
\DeclareMathOperator{\bQ}{\mathbb{Q}}

\DeclareMathOperator{\bT}{\mathbb{T}}
\DeclareMathOperator{\bZ}{\mathbb{Z}}

\DeclareMathOperator{\cA}{\mathcal{A}}
\DeclareMathOperator{\cC}{\mathcal{C}}
\DeclareMathOperator{\cD}{\mathcal{D}}
\DeclareMathOperator{\cF}{\mathcal{F}}
\DeclareMathOperator{\cS}{\mathcal{S}}

\DeclareMathOperator{\sC}{\mathsf{C}}

\DeclareMathOperator{\sM}{\mathsf{M}}
\DeclareMathOperator{\sX}{\mathsf{X}}
\DeclareMathOperator{\cl}{\mathsf{c}}

\title{Denominator vectors and dimension vectors from triangulated surfaces}
\author{Toshiya Yurikusa}
\address{Laboratoire de Math\'{e}matiques de Versailles, UVSQ, CNRS, Universit\'{e} Paris-Saclay, 78035 Versailles, France and Mathematical Institute, Tohoku University, Sendai 980-8578, Japan}
\email{toshiya.yurikusa.d8@tohoku.ac.jp}
\subjclass[2020]{13F60, 16G20}
\keywords{Cluster algebra, marked surface, denominator, dimension vector}

\begin{document}
\maketitle

\begin{abstract}
In a categorification of skew-symmetric cluster algebras, each cluster variable corresponds with an indecomposable module over the associated Jacobian algebra. Buan, Marsh and Reiten studied  when the denominator vector of each cluster variable in an acyclic cluster algebra coincides with the dimension vector of the corresponding module. In this paper, we give analogues of their results for cluster algebras from triangulated surfaces by comparing two kinds of intersection numbers of tagged arcs.
\end{abstract}

\section{Introduction}

Cluster algebras \cite{FZ02} are commutative algebras with generators called cluster variables. The certain tuples of cluster variables are called clusters. In a cluster algebra with principal coefficients, by Laurent phenomenon, any cluster variable $x$ is expressed by a Laurent polynomial of the initial cluster variables $(x_1,\ldots,x_n)$ and coefficients $(y_1,\ldots,y_n)$
\[
x=\frac{F(x_1, \ldots,x_n,y_1,\ldots,y_n)}{x_1^{d_1(x)} \cdots x_n^{d_n(x)}},
\]
where $d_i(x) \in \bZ$ and $F(x_1, \ldots, x_n,y_1,\ldots,y_n) \in \bZ[x_1,\ldots, x_n,y_1,\ldots,y_n]$ is not divisible by any $x_i$ \cite{FZ02,FZ07}. We call $d(x):=(d_i(x))_{1 \le i \le n}$ the denominator vector of $x$. For the maximal degree $f_i(x)$ of $y_i$ in the $F$-polynomial $F(1,\ldots,1,y_1,\ldots,y_n)$, we call $f(x):=(f_i(x))_{1 \le i \le n}$ the $f$-vector of $x$. It was conjectured that $d(x)=f(x)$ for any non-initial cluster variable $x$ in \cite[Conjecture 7.17]{FZ07}.

Let $Q$ be a quiver without oriented cycles of length at most two and $K$ an algebraically closed field. The quiver $Q$ defines a cluster algebra $\cA(Q)$ \cite{FZ02} and a Jacobian algebra $J(Q)=J(Q,W)$ over $K$ for a non-degenerate potential $W$ of $Q$ \cite{DWZ08}. Then non-initial cluster variables $x$ of $\cA(Q)$ correspond with certain indecomposable $J(Q)$-modules $\sM(x)$ (see Subsection \ref{subsec:categorification}). Note that the dimension vector $\udim\sM(x)$ is independent of the choice of $W$ as stated below. It was proved in \cite{CK06} (see also \cite{CC06,H06}) that if $Q$ is acyclic, then it satisfies the property
\begin{equation*}\label{eq:d=dim}
\tag{d=dim} \text{$d(x)=\udim\sM(x)$ for any non-initial cluster variable $x$ of $\cA(Q)$}.
\end{equation*}
However, it does not hold in general though it is known that $d(x)\le\udim\sM(x)$ \cite[Corollary 5.5]{DWZ10}. Buan, Marsh, and Reiten \cite{BMR09} gave a characterization of quivers satisfying \eqref{eq:d=dim} for mutation equivalence classes of acyclic quivers (see also \cite[Theorem 1.1]{GLS16}).

\begin{theorem}[{\cite[Theorems 1.5 and 1.6]{BMR09}}]\label{thm:BMR1}
Let $Q$ be a quiver obtained from an acyclic quiver by a sequence of mutations. If $Q$ satisfies \eqref{eq:d=dim}, then $\End_{J(Q)}(P)\simeq K$ for any indecomposable projective $J(Q)$-module $P$. In addition, if $J(Q)$ is a tame algebra, then the inverse holds.
\end{theorem}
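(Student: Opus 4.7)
The plan is to prove the two implications of the theorem separately. Both arguments turn on the analysis of the non-initial cluster variable $x_i^{\star}$ produced by mutating the initial cluster at a vertex $i$: via the categorification in Subsection~\ref{subsec:categorification}, this cluster variable corresponds to the mutation $T_i^{\star}$ of the initial cluster-tilting object of $\cC_Q$, and one has $\sM(x_i^{\star})=\Hom_{\cC_Q}(T,T_i^{\star})$ as a $J(Q)$-module.

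For the forward direction I would argue by contrapositive: assume $\End_{J(Q)}(P_i)\not\simeq K$ for some $i$. Then $e_iJ(Q)e_i$ contains a nonzero element of positive length, which via the identification $J(Q)\simeq\End_{\cC_Q}(T)^{\mathrm{op}}$ amounts to a non-scalar endomorphism of $T_i$ in $\cC_Q$. Applying $\Hom_{\cC_Q}(T_i,-)$ to the exchange triangle $T_i^{\star}\to B\to T_i\to \Sigma T_i^{\star}$ and tracking this extra endomorphism should force
\[
(\udim\sM(x_i^{\star}))_i\;=\;\dim_K\Hom_{\cC_Q}(T_i,T_i^{\star})\;>\;1.
\]
On the other hand, the exchange relation
\[
x_i^{\star}\,x_i\;=\;y_i\prod_{j}x_j^{[b_{ji}]_+}\,+\,\prod_{j}x_j^{[-b_{ji}]_+}
\]
has numerator monomials sharing no common $x_k$-factor (for $k=i$ because $b_{ii}=0$; for $k\ne i$ because $[b_{ki}]_+$ and $[-b_{ki}]_+$ cannot both be positive), so $d(x_i^{\star})=e_i$ and $d_i(x_i^{\star})=1<(\udim\sM(x_i^{\star}))_i$, contradicting \eqref{eq:d=dim}.

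For the converse, assume $J(Q)$ is tame and every $\End_{J(Q)}(P_i)\simeq K$. The plan is to combine the Caldero--Chapoton character formula for an arbitrary non-initial cluster variable $x$ with the structural rigidity that the hypotheses together provide. The absence of cycles through any vertex of $J(Q)$ restricts the shape of indecomposable modules, while tameness confines them to one-parameter families; these inputs should suffice to verify that no unwanted cancellation occurs in the CC-expansion of $x$, so that the $x_i$-denominator exponent equals $(\udim\sM(x))_i$ for every $i$. One could then proceed inductively along mutation sequences starting from an acyclic quiver, where \cite{CK06} supplies the base case, checking that \eqref{eq:d=dim} is preserved by each elementary mutation under the hypotheses.

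The main obstacle is this converse. The forward direction reduces to a short explicit calculation, but the converse really needs tameness to rule out cancellations in the CC-formula that could in principle drop the denominator below the dimension vector. I expect the delicate step to be either a case analysis using the classification of tame cluster-tilted algebras, or an inductive mutation argument supported by a non-trivial lemma controlling how $d(\cdot)$ and $\udim\sM(\cdot)$ both transform under elementary mutation.
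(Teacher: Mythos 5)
This statement is quoted from \cite[Theorems 1.5 and 1.6]{BMR09}; the paper offers no proof of it (it only explains, via \eqref{eq:End}, why the cluster-category formulation of BMR is equivalent to the module-theoretic one), so your proposal can only be judged on its own merits, and its forward direction contains a decisive error. Your witness is the once-mutated variable $x_i^{\star}$, and you want $(\udim\sM(x_i^{\star}))_i>1$ whenever $\End_{J(Q)}(P_i)\not\simeq K$. That inequality is false: the exchange relation gives $F$-polynomial $1+y_i$, so $f(x_i^{\star})=e_i$, and by Theorem \ref{thm:bijection-sC}(2) (equivalently, because $\Hom_{\cC}(\Sigma^{-1}\Gamma,\Gamma_i^{\ast})\simeq S_i$ is the simple at $i$) one has $\udim\sM(x_i^{\star})=e_i$ as well. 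Since you correctly computed $d(x_i^{\star})=e_i$, the first mutation always satisfies $d=\udim\sM$ and can never witness a failure of \eqref{eq:d=dim}, no matter how large $\End_{J(Q)}(P_i)$ is. A structural sanity check confirms something had to go wrong: your argument uses only formal exchange-triangle manipulations valid in any such category and never invokes the hypothesis that $Q$ is mutation-equivalent to an acyclic quiver, yet the implication fails without that hypothesis --- for the Markov quiver (once-punctured torus) \eqref{eq:d=dim} holds while $\dim\End_{J(Q_T,W)}(P)=4$, as recalled in the Introduction. The object that actually detects $\dim\End_{\cC}(\Gamma_i)$ is not the mutation $\Gamma_i^{\ast}$ but the shift $\Sigma^{-1}\Gamma_i$, since $\dim\Hom_{\cC}(\Sigma^{-1}\Gamma_i,\Sigma^{-1}\Gamma_i)=\dim\End_{\cC}(\Gamma_i)$; this is exactly the mechanism in the paper's surface analogue (Theorem \ref{thm:TFAE final}), where $\Sigma^{-1}\Gamma_i$ corresponds to the tagged rotation $\rho^{-1}(\gamma)$, and the genuine content is then the comparison of that entry of the dimension vector with the corresponding denominator entry --- the step where acyclicity (or, in the paper, the intersection-number combinatorics of Theorem \ref{thm:CIV}) must enter and which your sketch omits.

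The converse direction is not a proof but a program: phrases such as ``should suffice to verify that no unwanted cancellation occurs'' and ``I expect the delicate step to be\dots'' leave precisely the hard part open, namely how tameness is used. In BMR's work this is where the representation theory of tame cluster-tilted algebras (tubes and the behaviour of modules with self-cycles) does real work, and nothing in your outline engages with it; an induction ``along mutation sequences'' also needs a lemma controlling how both $d(\cdot)$ and $\udim\sM(\cdot)$ transform under a single mutation, which you acknowledge but do not supply. Note finally that the route this paper takes for its own analogue is entirely different and does not pass through the Caldero--Chapoton formula at all: it reduces \eqref{eq:d=dim} to the statement $\Int(\gamma,\rho^{-1}(\gamma))=1$ for all $\gamma\in T$ and then uses $\sX_T(\rho(\gamma))\simeq\Sigma\sX_T(\gamma)$ to translate this into $\End_{\cC}(\Gamma_{\gamma})\simeq K$.
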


Note that \cite[Theorems 1.5 and 1.6]{BMR09} were given in terms of the associated cluster category, but they are equivalent to Theorem \ref{thm:BMR1} by \cite[Remark 3.4]{BMR09} (see \eqref{eq:End}). They also gave the following result as a consequence of Theorem \ref{thm:BMR1}.

\begin{theorem}[{\cite[Corollary 1.7]{BMR09}}]\label{thm:BMR2}
Let $Q$ be an acyclic quiver. Then every quiver obtained from $Q$ by a sequence of mutations satisfies \eqref{eq:d=dim} if and only if $Q$ is a Dynkin quiver or has exactly two vertices.
\end{theorem}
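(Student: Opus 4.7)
The plan is to deduce both directions from Theorem~\ref{thm:BMR1}.

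For the ``if'' direction I treat the two cases separately. If $|Q_0|=2$, then mutating at either vertex simply reverses all arrows and produces no $2$-cycles, so every $Q'$ in the mutation class is acyclic. Hence $J(Q')=KQ'$ is hereditary, and \eqref{eq:d=dim} follows from the Caldero--Keller theorem already cited. If $Q$ is Dynkin, then every $Q'$ yields a Dynkin-type cluster-tilted algebra, which is representation-finite and hence tame. By the second (tame) implication of Theorem~\ref{thm:BMR1}, it suffices to verify that $\End_{J(Q')}(P_i)\simeq K$ for each indecomposable projective $P_i$. This reduces to the known structural fact that in a Dynkin cluster-tilted algebra every oriented cycle is a chordless $3$-cycle appearing in the canonical potential, whose Jacobian relations annihilate any non-scalar cyclic element of $e_iJ(Q')e_i$.

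For the ``only if'' direction I use the unconditional forward implication of Theorem~\ref{thm:BMR1}: if $Q'$ satisfies \eqref{eq:d=dim}, then $\End_{J(Q')}(P_i)\simeq K$ for every vertex $i$. Hence it suffices to exhibit, for each acyclic non-Dynkin quiver $Q$ with $|Q_0|\ge 3$, a single mutation $Q'$ and a vertex $i$ with $e_iJ(Q')e_i\ne Ke_i$. I would first reduce, via sink/source mutations (which preserve acyclicity), to a short list of minimal non-Dynkin obstructions supported on three vertices, and then for each obstruction produce an explicit mutation $Q'$ containing an oriented $3$-cycle through a vertex $i$ whose cyclic element is non-zero in $J(Q')$.

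The hard part is precisely the non-vanishing of this cyclic element modulo the Jacobian ideal. My approach is to invoke the invariance of $\dim_K e_iJ(Q')e_i$ under right-equivalence of non-degenerate potentials from \cite{DWZ08} to reduce the check to a single convenient potential---typically the sum of the $3$-cycles with unit coefficients---after which the non-vanishing becomes a direct combinatorial check on each minimal configuration. A subtle organizational point is that mutation and passage to full subquivers do not commute in general, so lifting the conclusion from the three-vertex obstruction back to the ambient $Q$ requires a separate verification that the relevant cyclic element of $J(Q')$ is not killed by the larger potential induced from $Q$; this is the step I expect to be the main obstacle.
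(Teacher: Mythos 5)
You are comparing against a proof the paper does not contain: Theorem~\ref{thm:BMR2} is quoted verbatim from \cite[Corollary 1.7]{BMR09} (the paper only proves its surface analogue, Corollary~\ref{cor:analogue2}, by entirely different, geometric means), so your proposal has to stand on its own --- and as written it has two genuine gaps. The two-vertex case is fine, but in the Dynkin half of the ``if'' direction the structural fact you invoke is false outside type $A$: the mutation class of $D_4$ already contains the chordless oriented $4$-cycle (start from the orientation $1\to c$, $c\to 2$, $3\to c$ and mutate at $c$, then at $1$), and types $D_n$ and $E$ contain chordless oriented cycles of length greater than three. Hence ``every oriented cycle is a chordless $3$-cycle in the canonical potential'' cannot be the reason that $\End_{J(Q')}(P_i)\simeq K$; that conclusion is true for cluster-tilted algebras of Dynkin type, but it needs the actual description of their relations (shortest-path relations, as in the Buan--Marsh--Reiten/Buan--Vatne/Vatne classification) or the known finite-type denominator theorem, neither of which your argument supplies.

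The ``only if'' direction is where the proposal essentially stops. Sink/source mutations do not change the underlying graph, so they cannot reduce an acyclic non-Dynkin quiver to a ``minimal obstruction supported on three vertices'': a quiver of shape $\widetilde{D}_n$ or $\widetilde{E}_m$, or an acyclically oriented cycle $\widetilde{A}_n$, has no non-Dynkin full subquiver on three vertices at all (the minimal non-Dynkin graphs are the extended Dynkin graphs, with up to nine vertices), and a wild quiver may have its only non-Dynkin full subquiver on two vertices. Reaching a three-vertex ``bad'' configuration therefore requires genuine mutations, after which one must still show that the non-scalar cycle survives in $e_iJ(Q')e_i$ of the ambient quiver --- exactly the step you yourself flag as the main obstacle, and the appeal to right-equivalence invariance only helps once one knows the two non-degenerate potentials are right-equivalent, which is not automatic (for quivers in acyclic mutation classes one should simply work with the cluster-tilted algebra itself). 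The argument of \cite{BMR09} bypasses this combinatorics by working in the cluster category of the hereditary algebra $KQ$: for a connected acyclic non-Dynkin quiver with at least three vertices one exhibits an indecomposable rigid object $X$ with $\End_{\cC}(X)\not\simeq K$ (in the tame case, for instance, the image of a quasi-simple rigid regular module in a tube of rank two, whose extra endomorphisms come from $\Hom_{KQ}(X,\tau^{2}X)=\End_{KQ}(X)\neq 0$), completes it to a cluster-tilting object, and then applies the forward half of Theorem~\ref{thm:BMR1} together with \eqref{eq:End}. Until your proposal either carries out such a construction or resolves the subquiver-lifting issue, the ``only if'' direction remains unproved.
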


On the other hand, in the case that $J(Q)$ is finite dimensional, Fu and Keller \cite{FK10} proved that the equality $f(x)=\udim\sM(x)$ holds and it gave a counterexample of \cite[Conjecture 7.17]{FZ07} (based on \cite{BMR09}). This equality holds in general (Theorem \ref{thm:bijection-sC}), thus the vector $\udim\sM(x)$ and the property \eqref{eq:d=dim} are independent of the choice of $W$ since $f$-vectors are so.

In this paper, we consider Theorems \ref{thm:BMR1} and \ref{thm:BMR2} for quivers defined from triangulated surfaces. Cluster algebras associated with marked surfaces were introduced in \cite{FoST08} based on \cite{FG06}. To a tagged triangulation $T$ of a marked surface $\cS$, they associated a quiver $Q_T$ and studied the cluster algebra $\cA(Q_T)$. In particular, tagged arcs (resp., tagged triangulations, tagged arcs of $T$) of $\cS$ correspond bijectively with cluster variables (resp., clusters, the initial cluster variables) of $\cA(Q_T)$ except that $\cS$ is a closed surface with exactly one puncture. Moreover, flips of tagged triangulations are compatible with mutations of clusters (see Section \ref{sec:CA from TS}).

Moreover, there are two kinds of intersection numbers of tagged arcs: One was introduced in \cite{FoST08} to give denominator vectors in $\cA(Q_T)$; Another one was introduced in \cite{QZ17} to give dimension of homomorphisms in the associated cluster category. Then the latter also gives $f$-vectors in $\cA(Q_T)$ (Theorem \ref{thm:bijection-x}). Therefore, we mainly study these intersection numbers. As a consequence, we characterize quivers satisfying \eqref{eq:d=dim} in geometric terms.

\begin{theorem}\label{thm:d=dim}
Let $\cS$ be a marked surface and $T$ a tagged triangulation of $\cS$. Then $Q_T$ satisfies \eqref{eq:d=dim} if and only if either $\cS$ is a torus with exactly one puncture or $T$ has neither loops nor tagged arcs connecting punctures.
\end{theorem}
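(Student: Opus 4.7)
The plan is to translate \eqref{eq:d=dim} into a pointwise comparison of the two kinds of intersection numbers on tagged arcs, and then carry out the resulting geometric case analysis. By the categorification results recalled in the introduction, $f(x)=\udim\sM(x)$ for every non-initial cluster variable $x$, so \eqref{eq:d=dim} is equivalent to $d(x)=f(x)$ for all such $x$. Using the bijection between tagged arcs of $\cS$ and cluster variables of $\cA(Q_T)$, write $\gamma$ for the tagged arc corresponding to $x$ and $T_1,\ldots,T_n$ for the tagged arcs of $T$. The results recalled above identify $d_i(x)$ with the FoST-intersection number of $T_i$ and $\gamma$, and $f_i(x)$ with the QZ-intersection number of $T_i$ and $\gamma$ (via Theorem \ref{thm:bijection-x}). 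Consequently, the theorem reduces to the purely combinatorial statement that these two intersection numbers agree for every $i$ and every non-initial tagged arc $\gamma$, if and only if either $\cS$ is the once-punctured torus, or $T$ has neither loops nor tagged arcs connecting punctures.

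From here the argument is local, since the two intersection numbers coincide in the interior of $\cS$ and differ only through contributions at endpoints sitting at a common puncture and through the multiplicity conventions attached to self-folded triangles around loops. For the sufficiency direction I would analyze, puncture by puncture, the possible configurations of $\gamma$ near a given $T_i$: assuming no $T_i$ is a loop and no $T_i$ connects two punctures, the restrictions on $T$ leave only a short list of cases at each puncture, and a direct check in each case shows that the FoST and QZ local contributions agree. For the necessity direction I would assume that some $T_i$ is a loop around a puncture $p$ or joins two distinct punctures $p,q$, and construct an explicit notched tagged arc $\gamma$ ending at $p$ whose FoST and QZ intersection numbers with $T_i$ differ; producing such a $\gamma$ on any $\cS$ that is not the exceptional surface yields a cluster variable violating $d(x)=f(x)$.

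The main obstacle is the exceptional treatment of the once-punctured torus. On this surface every tagged arc is automatically a loop at the unique puncture, so the naive necessity construction above would seem to apply, yet the theorem asserts \eqref{eq:d=dim} still holds. The correct explanation must be that the two endpoints of every arc live at the same puncture and the corresponding local contributions, computed simultaneously, cancel identically in both the FoST and QZ rules. Verifying this rigorously will require a direct computation on a small initial tagged triangulation of the once-punctured torus followed by a flip-invariance argument propagating the equality to every non-initial tagged arc. Additional care is needed near self-folded triangles, where the denominator formula carries implicit identifications between a loop and its enclosed arc that must be matched against the QZ counting rules; this is where the bulk of the geometric bookkeeping, and the subtlety of the exceptional case, is concentrated.
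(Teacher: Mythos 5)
Your opening reduction is the same as the paper's: using $f(x)=\udim\sM(x)$ and Theorem \ref{thm:bijection-x}, the property \eqref{eq:d=dim} becomes the statement that the FST- and QZ-intersection vectors of $T$ agree on all non-initial tagged arcs. But already here there is an imprecision that turns out to be the crux of the theorem: cluster variables of $\cA(Q_T)$ correspond only to arcs in $\bA_{\cS}^T$ (the flip-reachable ones), not to all of $\bA_{\cS}$, so the correct target is the ``weak'' condition (wCIV), not the condition for all arcs (CIV). For a closed surface with one puncture this distinction is everything: by Theorem \ref{thm:transitivity of flips} all arcs in $\bA_{\cS}^T$ carry the same tags as $T$, so the tag-discrepancy terms $C''_{\gamma,\delta}$ and $D''_{\gamma,\delta}$ of Proposition \ref{prop:difference between intersection numbers} vanish identically on reachable arcs. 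Your proposed explanation of the once-punctured torus --- that the FST and QZ local contributions at the puncture ``cancel identically'' --- is false: for example $(\gamma\,|\,\rho(\gamma))\neq\Int(\gamma,\rho(\gamma))$ there; the arcs exhibiting a discrepancy simply are not cluster variables. What remains to be shown for the torus is that the loop term $B_{\gamma,\delta}$ (contractible triangles) vanishes, which is a genuinely nontrivial fact the paper imports from \cite{RS17} (Lemma \ref{lem:once-punctured cases}(1)); a ``direct computation plus flip-invariance'' sketch does not supply this.

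The necessity direction has two further gaps. First, you only treat loops around a puncture and arcs joining two distinct punctures, and propose a notched arc at a puncture as the counterexample. This misses loops based at boundary marked points: the hypothesis ``$T$ has no loops'' excludes these as well, and for them the discrepancy has nothing to do with tags --- it comes from the term $B_{\gamma,\delta}$, produced in the paper by an explicit Dehn-twist construction (Lemma \ref{lem:loop B}); Example \ref{ex:acyclic}, an unpunctured annulus, shows \eqref{eq:d=dim} failing with no puncture anywhere, where your construction has nothing to attach to. Second, for a closed surface with exactly one puncture and genus at least two (which is not the exceptional surface and where every arc is a loop connecting punctures), your tag-based counterexample is again unavailable, because notched arcs are not flip-reachable and hence not cluster variables; the failure of \eqref{eq:d=dim} there must again be forced through $B_{\gamma,\delta}$, which requires the explicit higher-genus construction of Lemma \ref{lem:once-punctured cases}(2). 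So while the algebraic reduction matches the paper, the geometric core --- the decomposition $(\gamma|\delta)=\Int(\gamma,\delta)+B+C''+D''$ with its three separate sources of discrepancy, and the restriction to reachable arcs --- is missing, and the cases driven by $B$ are not covered by your argument.
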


\begin{theorem}\label{thm:d=dim cS}
Let $\cS$ be a marked surface.
\begin{enumerate}
\item There is at least one tagged triangulation $T$ of $\cS$ such that $Q_T$ satisfies \eqref{eq:d=dim} if and only if $\cS$ is one of the following:
\begin{itemize}
 \item a marked surface of genus zero with non-empty boundary;
 \item a marked surface of positive genus with non-empty boundary and at least three marked points, at least two of which are on $\partial S$;
 \item a torus with exactly one puncture.
\end{itemize}
\item The quivers $Q_T$ satisfy \eqref{eq:d=dim} for all tagged triangulations $T$ of $\cS$ if and only if $\cS$ is one of the following:
\begin{itemize}
 \item a polygon with at most one puncture;
 \item an annulus with exactly two marked points;
 \item a torus with exactly one puncture.
\end{itemize}
\end{enumerate}
\end{theorem}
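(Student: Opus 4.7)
The plan is to derive Theorem~\ref{thm:d=dim cS} from Theorem~\ref{thm:d=dim} by a topological case analysis on $\cS$. By Theorem~\ref{thm:d=dim}, $Q_T$ satisfies \eqref{eq:d=dim} exactly when $\cS$ is the once-punctured torus or $T$ has neither a loop nor a tagged arc connecting two punctures; call such $T$ \emph{good}. For Part~(2), I use the standard fact that every tagged arc of $\cS$ lies in some tagged triangulation (a consequence of the connectedness of the flip graph of tagged triangulations in Fomin--Shapiro--Thurston), so ``every tagged triangulation of $\cS$ is good'' is equivalent to ``$\cS$ admits no loop tagged arc and no puncture-connecting tagged arc.''

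For Part~(1), I prove the ``if'' direction by constructing a good tagged triangulation for each listed surface; the once-punctured torus is handled by the first clause of Theorem~\ref{thm:d=dim}. For genus zero with non-empty boundary, I pick a boundary marked point $v$, draw a diagonal from $v$ to every other boundary marked point and a plain pending arc from $v$ to every puncture, and recursively triangulate the remaining unpunctured polygons and once-punctured monogons; no arc has coincident endpoints and no arc connects two punctures. For positive genus satisfying the hypothesis, I inductively cut along a non-separating arc joining two distinct boundary marked points (which exists by the hypothesis of at least two boundary marked points), reducing genus by one per cut, and then apply the genus-zero construction. For the ``only if'' direction, I verify the excluded surfaces: a closed surface other than the once-punctured torus has all marked points as punctures, so every arc is a loop at a puncture or a puncture-connecting arc; for positive-genus surfaces with boundary failing the hypothesis, a degree count using the formula $6g + 3b + 3p + |M \cap \partial\cS| - 6$ for the size of a triangulation forces loops or puncture-connecting arcs.

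For Part~(2), by the reformulation I classify surfaces admitting no loop tagged arc and no puncture-connecting tagged arc. For polygons with at most one puncture, a loop at a boundary vertex is either contractible in the disk (cutting off a monogon) or encloses the puncture (cutting off a once-punctured monogon), and a loop at the puncture is contractible and cuts off a monogon; all are excluded. In the annulus with exactly two marked points, each boundary circle is a single boundary segment (itself a loop at its marked point), so every non-contractible simple loop at a marked point is isotopic to such a boundary segment and excluded. Puncture-connecting arcs do not exist for lack of two punctures. Conversely, for each non-listed surface I exhibit a loop or puncture-connecting tagged arc: polygons with $\geq 2$ punctures give a puncture-to-puncture arc; annuli with $\geq 3$ marked points and pairs of pants admit a loop at a boundary vertex wrapping a boundary component whose marked points do not include the base vertex, and this loop is not isotopic to any boundary segment at the base; positive-genus surfaces with boundary (other than the once-punctured torus case) admit a loop around a handle; and closed surfaces other than the once-punctured torus have every arc as a loop or puncture-connecting arc. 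The main obstacle is the case analysis of the ``only if'' directions: I must verify in borderline cases (particularly the annulus with two marked points for Part~(2) and small-marking positive-genus surfaces for Part~(1)) that the relevant candidate loops are or are not excluded by the Fomin--Shapiro--Thurston conditions, which requires careful analysis of isotopy classes of simple closed curves in the annulus and of non-separating arcs in handle surfaces.
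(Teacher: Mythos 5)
Your overall reduction is the same as the paper's: via Theorem \ref{thm:d=dim} (equivalently Corollary \ref{cor:wCIV d=dim} plus Theorems \ref{thm:CIV} and \ref{thm:wCIV}), the statement becomes a purely topological classification of surfaces admitting, respectively requiring, tagged triangulations with no loops and no arcs joining punctures, which is exactly Theorem \ref{thm:CIV cS}; your treatment of part (2), including the use of the fact that every tagged arc extends to a tagged triangulation, matches the paper's. The gaps are in your part (1). First, the ``only if'' direction for positive genus with boundary: you dismiss the failing cases ($|M\cap\partial S|=1$, and $|M|=2$ with both points on the boundary) with ``a degree count using the formula $6g+3b+3p+|M\cap\partial S|-6$ forces loops or puncture-connecting arcs.'' A count of the number of arcs does not force this. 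For instance, when there is one boundary marked point $m$ and $p\ge 1$ punctures, a putative good triangulation would consist entirely of arcs joining $m$ to punctures, and no numerical count of arcs or endpoint valences rules this out; the actual obstruction in the paper (Cases 2 and 3 of the proof of Theorem \ref{thm:CIV cS}(1)) is structural: one analyzes which puzzle pieces can contain the boundary segment and how they chain together, forcing $S$ to be a sphere (resp.\ forcing a loop in a triangle adjacent to a boundary segment), a contradiction. You need an argument of this kind; the asserted count is not a proof.

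Second, your ``if'' constructions gloss over exactly the points where loops can sneak in. In the genus-zero fan construction, the leftover regions you call ``once-punctured monogons'' cannot be triangulated without their boundary side being a loop; you must instead enclose each puncture in a digon based at boundary points containing a conjugate pair (as in the paper's Proposition \ref{prop:adding}(2)), or route further arcs from boundary vertices to the punctures, and you must check the remaining polygon diagonals never join two copies of the same vertex. In the positive-genus induction, an arc joining marked points on two \emph{different} boundary components does not reduce genus when you cut (it merges the components); genus drops only for a suitable non-separating arc with both endpoints on the \emph{same} boundary component, so the hypothesis $|M\cap\partial S|\ge 2$ alone does not immediately provide the cut you claim. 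Moreover, after cutting, an arc of the genus-zero triangulation of the cut surface joining the two copies of the same marked point descends to a loop on $\cS$; the paper's explicit pictures in Case 4 are chosen precisely to avoid this, and your sketch does not address it. These are fixable, but as written the ``if'' direction for positive genus and the ``only if'' direction for the excluded positive-genus surfaces are genuinely incomplete.
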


An analogue of Theorem \ref{thm:BMR2} immediately follows from Theorem \ref{thm:d=dim cS}(2) (see Example \ref{ex:quivers from sS}), where we say that a quiver is Dynkin type if it is obtained from a Dynkin quiver by a sequence of mutations.

\begin{corollary}\label{cor:analogue2}
Let $T$ be a tagged triangulation. Then every quiver obtained from $Q_T$ by a sequence of mutations satisfies \eqref{eq:d=dim} if and only if $Q_T$ is either Dynkin type, a Kronecker quiver, or a Markov quiver.
\end{corollary}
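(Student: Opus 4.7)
The plan is to deduce Corollary \ref{cor:analogue2} directly from Theorem \ref{thm:d=dim cS}(2) by way of the standard dictionary between flips of tagged triangulations and mutations of their quivers. Concretely, the flip of $T$ at an arc $\gamma\in T$ corresponds to the mutation of $Q_T$ at the vertex labelled by $\gamma$, and iterating yields that the mutation class of $Q_T$ is precisely $\{Q_{T'} \mid T'\text{ is a tagged triangulation of }\cS\}$. Consequently ``every quiver obtained from $Q_T$ by a sequence of mutations satisfies \eqref{eq:d=dim}'' is equivalent to the hypothesis of Theorem \ref{thm:d=dim cS}(2), namely that $\cS$ is a polygon with at most one puncture, an annulus with exactly two marked points, or a once-punctured torus.

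The remaining task is to identify, for each of these three surface families, which quivers actually arise as $Q_T$. I would argue as follows. An unpunctured $(n+3)$-gon produces a quiver mutation-equivalent to a type $A_n$ Dynkin quiver, and a once-punctured $(n+1)$-gon produces one of type $D_n$; in both subcases $Q_T$ is of Dynkin type. An annulus with exactly two marked points must have one marked point on each boundary component, every tagged triangulation contains exactly two arcs, and a direct inspection shows $Q_T$ is a Kronecker quiver. Finally, every tagged triangulation of the once-punctured torus consists of three arcs pairwise meeting at the puncture, and $Q_T$ is readily checked to be a Markov quiver. This explicit matching is precisely the content of the forthcoming Example \ref{ex:quivers from sS}.

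For the converse direction I would verify that each of the three quiver classes listed really does arise from one of the surfaces above: types $A_n$ and $D_n$ from polygons, the Kronecker quiver from the two-marked-point annulus, and the Markov quiver from the once-punctured torus. One conceptual point deserves mention: \emph{Dynkin type} nominally includes the exceptional types $E_6,E_7,E_8$, but no quiver of these types arises from a triangulated surface, so the ``if'' direction effectively concerns only types $A_n$ and $D_n$. I do not expect any genuine obstacle beyond this geometric bookkeeping, since the deep content is already packaged in Theorem \ref{thm:d=dim cS}(2); what remains is merely to translate its list of surfaces into the corresponding list of quiver mutation classes.
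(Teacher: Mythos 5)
Your proposal is correct and follows essentially the paper's own route: translate mutations of $Q_T$ into flips via the flip--mutation compatibility, invoke Theorem \ref{thm:d=dim cS}(2), and then identify the quivers arising from the three surface families (type $A$/$D$ from polygons, Kronecker from the two-marked-point annulus, Markov from the once-punctured torus), which is exactly the content of Example \ref{ex:quivers from sS}. Your explicit remark that exceptional Dynkin types $E_6,E_7,E_8$ do not arise from surfaces is a point the paper leaves implicit, but it is the same argument.
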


Moreover, we also give an analogue of Theorem \ref{thm:BMR1} except for closed surfaces with exactly one puncture by using Theorem \ref{thm:d=dim}.

\begin{theorem}[Theorem \ref{thm:TFAE final}]\label{thm:analogue1}
Let $T$ be a tagged triangulation of a marked surface which is not a closed surface with exactly one puncture. Then $Q_T$ satisfies \eqref{eq:d=dim} if and only if $\End_{J(Q_T)}(P)\simeq K$ for any indecomposable projective $J(Q_T)$-module $P$.
\end{theorem}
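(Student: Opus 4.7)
The plan is to leverage Theorem~\ref{thm:d=dim}, which is already established, and reduce the statement to a local computation of $\End_{J(Q_T)}(P_\alpha)$ using Labardini--Fragoso's explicit potential $W_T$ on $Q_T$. Because $\cS$ is not a closed surface with exactly one puncture, the torus-with-one-puncture exception of Theorem~\ref{thm:d=dim} is excluded, so $Q_T$ satisfies \eqref{eq:d=dim} if and only if $T$ contains neither a loop nor a tagged arc connecting two (possibly equal) punctures. Thus it suffices to prove the combinatorial equivalence
\begin{equation*}
(\ast)\qquad\text{$T$ has no loops and no puncture-to-puncture arcs}\ \iff\ \End_{J(Q_T)}(P_\alpha)\simeq K\ \text{for every }\alpha\in T,
\end{equation*}
where $P_\alpha=J(Q_T)e_\alpha$ is the indecomposable projective at vertex $\alpha$. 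Since $\End_{J(Q_T)}(P_\alpha)\simeq e_\alpha J(Q_T)e_\alpha$ is the space of cyclic paths at $\alpha$ modulo the Jacobian ideal generated by the cyclic derivatives $\partial_a W_T$, and the arrows and potential cycles incident to $\alpha$ are completely determined by the triangles (ordinary or self-folded) meeting $\alpha$, $(\ast)$ becomes a finite local problem at each endpoint of $\alpha$.

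For the ``$\Leftarrow$'' direction I would argue contrapositively. If $\alpha$ is a loop at a puncture $p$, the self-folded triangle with loop $\alpha$ together with the ambient triangle generates a nontrivial cyclic path at the vertex $\alpha$; if $\alpha$ is a tagged arc joining two (possibly equal) punctures, Labardini--Fragoso's puncture cycles at each endpoint compose through $\alpha$ into a candidate element of $e_\alpha J(Q_T)e_\alpha$. In each subcase I would exhibit the candidate path explicitly and check non-vanishing by inspecting $\partial_a W_T$ for every arrow $a$ at $\alpha$, producing an endomorphism of $P_\alpha$ outside $K\cdot\mathrm{id}$. For the ``$\Rightarrow$'' direction, under the hypothesis each endpoint of $\alpha$ lies either on $\partial\cS$ or at a puncture that is met by $\alpha$ only once; in each resulting local configuration I would enumerate the cyclic paths at $\alpha$ in $Q_T$ and show that each is a consequence of $\partial W_T$, forcing $e_\alpha J(Q_T)e_\alpha=K\cdot e_\alpha$.

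The main obstacle is the case analysis for the ``$\Rightarrow$'' direction: at a puncture with several incident arcs (possibly notched) the tagging contributes additional arrows and cycles to $Q_T$, and one must verify that every such cycle at $\alpha$ is killed by $\partial W_T$. The crucial geometric input is that $\alpha$ has at most one puncture endpoint, so combining the puncture-cycle relations with the standard relations at each ordinary interior vertex gives enough vanishing to trivialise $e_\alpha J(Q_T)e_\alpha$. The exceptional behaviour on a closed surface with exactly one puncture---where every arc inevitably has both endpoints at the unique puncture---is precisely what forces the hypothesis on $\cS$ in this theorem.
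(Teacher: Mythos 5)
Your first reduction (via Theorem \ref{thm:d=dim}, excluding the once-punctured torus by the hypothesis on $\cS$) agrees with the paper, but everything after that is where the actual content lies, and your proposal does not supply it. The paper never computes inside the Jacobian algebra at all: it proves the equivalence $(\ast)$ categorically, using $\End_{J}(P_\gamma)\simeq\End_{\cC}(\Gamma_\gamma)$ (\eqref{eq:End}), the identification of $f$-vectors with QZ-intersection vectors (Theorems \ref{thm:bijection-sC}(2) and \ref{thm:bijection-x}(3)), and the tagged-rotation formula $\sX_T(\rho(\gamma))=\Sigma\sX_T(\gamma)$ of Theorem \ref{thm:rotation=shift} (available because $J$ is finite dimensional, Theorem \ref{thm:fin dim}); this yields $\dim\End_J(P_\gamma)=\Int(\gamma,\rho^{-1}(\gamma))$, and Table \ref{table:Int(g,rho g)} converts that number into the ``no loops, no puncture-to-puncture arcs'' condition. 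Your route replaces this by a direct computation of $e_\alpha J(Q_T,W_T)e_\alpha$ from Labardini--Fragoso's potential, but you only describe the computation; the case analysis you yourself identify as the main obstacle is exactly the missing proof.

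Two concrete problems make the sketch, as stated, not repairable by routine effort. First, the claim that determining $e_\alpha J e_\alpha$ is a ``finite local problem'' governed by the triangles meeting $\alpha$ is unjustified: cyclic paths at $\alpha$ may wind globally around the surface, and their (non)vanishing depends on the whole two-sided Jacobian ideal, not on the cyclic derivatives at arrows incident to $\alpha$. The example in Section \ref{sec:example} shows this vividly: for the loop $3$ (at a boundary point) one has $\dim e_3 J(T)e_3=4$, realized by cycles of length up to twelve passing through every other vertex. For the same reason, ``checking non-vanishing by inspecting $\partial_a W_T$ for every arrow $a$ at $\alpha$'' does not establish that your candidate cycle is nonzero in $J$; you would need a normal form/basis for $e_\alpha J e_\alpha$ or a module on which the cycle acts nontrivially. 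Second, the statement concerns $J(Q_T)=J(Q_T,W)$ for a non-degenerate potential $W$, and the paper's argument works for an arbitrary such $W$; fixing the specific potential $W_T$ proves at best the statement for that potential, and transferring it to all non-degenerate potentials requires the uniqueness up to right-equivalence from the classification results, which you do not invoke. (A smaller slip: tagged triangulations contain no self-folded triangles---loops enclosing a once-punctured monogon are replaced by conjugate pairs---so your loop case should be modelled on loops such as the one in Example \ref{ex:acyclic}, not on a self-folded triangle.)
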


Finally, if $\cS$ is a torus with exactly one puncture, then $Q_T$ satisfies \eqref{eq:d=dim} by Theorem \ref{thm:d=dim}. However, Labardini-Fragoso \cite{Lab09} constructed a non-degenerate potential $W$ of $Q_T$ such that 
\begin{itemize}
\item $\dim\End_{J(Q_T,W)}(P)=4$ for any indecomposable projective $J(Q_T,W)$-module $P$ \cite[Proposition 4.2]{Lad12};
\item $J(Q_T,W)$ is tame \cite{GLS16} (see also \cite[Theorem 3.5]{Lab16survey}).
\end{itemize} 
Therefore, Theorem \ref{thm:BMR1} (or Theorem \ref{thm:analogue1}) does not hold in general.

This paper is organized as follows. In Section \ref{sec:cS}, we only work on marked surfaces. We study two kinds of intersection numbers and give geometric versions of Theorems \ref{thm:d=dim} and \ref{thm:d=dim cS} (Theorems \ref{thm:CIV}, \ref{thm:wCIV} and \ref{thm:CIV cS}). In Section \ref{sec:categorification}, we recall cluster algebras and their categorification. For the case arising from marked surfaces, they have many properties as in Section \ref{sec:CA from TS}. Using the properties and the results in Section \ref{sec:cS}, we give Theorems \ref{thm:d=dim}, \ref{thm:d=dim cS} in Subsection \ref{subsec:CA TS} and Theorem \ref{thm:analogue1} in Subsection \ref{subsec:categorification TS}. Moreover, we also characterize the condition of Corollary \ref{cor:analogue2} in terms of the associated cluster category (Proposition \ref{prop:characterization}). In Section \ref{sec:example}, we give an example which is not covered by \cite{BMR09}.

\section{On marked surfaces}\label{sec:cS}

In this section, we work on marked surfaces \cite{FoST08,FT18}.

\subsection{Intersection numbers of tagged arcs}\label{subsec:int}

Let $S$ be a connected compact oriented Riemann surface with (possibly empty) boundary $\partial S$ and $M$ a non-empty finite set of marked points on $S$ with at least one marked point on each connected component of $\partial S$. We call the pair $\cS=(S,M)$ a \emph{marked surface}. A marked point in the interior of $S$ is called a \emph{puncture}. For technical reasons, we assume that $\cS$ is not a monogon with at most one puncture, a digon without punctures, a triangle without punctures, and a sphere with at most three punctures (see \cite{FoST08} for the details). Throughout this paper, we consider a curve on $S$ up to isotopy relative to $M$.

A \emph{tagged arc} of $\cS$ is a curve on $S$ whose endpoints are in $M$ and each end is tagged in one of two ways, \emph{plain} or \emph{notched}, such that the following conditions are satisfied:
\begin{itemize}
 \item it does not intersect itself except at its endpoints;
 \item it is disjoint from $M$ and $\partial S$ except at its endpoints;
 \item it does not cut out a monogon with at most one puncture or a digon without punctures;
 \item its ends incident to $\partial S$ are tagged plain;
 \item both ends of a loop are tagged in the same way,
\end{itemize}
where a \emph{loop} is a tagged arc with two identical endpoints. In the figures, we represent tags as follows:
\[
\begin{tikzpicture}[baseline=-1mm]
 \coordinate(0)at(0,0) node[left]{plain};
 \coordinate(1)at(1,0); \fill(1)circle(0.07);
 \draw(0)to(1);
\end{tikzpicture}
\hspace{7mm}
\begin{tikzpicture}[baseline=-1mm]
 \coordinate(0)at(0,0) node[left]{notched};
 \coordinate(1)at(1,0); \fill(1)circle(0.07);
 \draw(0)to node[pos=0.8]{\rotatebox{90}{\footnotesize $\bowtie$}}(1);
\end{tikzpicture}\ .
\]
A \emph{pair of conjugate arcs} is the following pair of tagged arcs whose underlying curves coincide:
\[
\begin{tikzpicture}
 \coordinate(0)at(0,0);
 \coordinate(1)at(0,-1.2);
 \draw(0)to[out=-60,in=-120,relative](1);
 \draw(0)to[out=60,in=120,relative] node[pos=0.2]{\rotatebox{40}{\footnotesize $\bowtie$}}(1);
 \fill(0)circle(0.07); \fill(1)circle(0.07);
\end{tikzpicture}
\hspace{20mm}
\begin{tikzpicture}
 \coordinate(0)at(0,0);
 \coordinate(1)at(0,-1.2);
 \draw(0)to[out=-60,in=-120,relative] node[pos=0.8]{\rotatebox{40}{\footnotesize $\bowtie$}}(1);
 \draw(0)to[out=60,in=120,relative] node[pos=0.2]{\rotatebox{40}{\footnotesize $\bowtie$}} node[pos=0.8]{\rotatebox{-40}{\footnotesize $\bowtie$}}(1);
 \fill(0)circle(0.07); \fill(1)circle(0.07);
\end{tikzpicture}\ .
\]
We denote by $\bA_{\cS}$ the set of tagged arcs of $\cS$. We also recall a rotation of tagged arcs \cite{BQ15} (see Table \ref{table:Int(g,rho g)}).

\begin{definition}\label{def:rotation}
The \emph{tagged rotation} of $\gamma\in\bA_{\cS}$ is the tagged arc $\rho(\gamma)$ defined as follows:
\begin{itemize}
 \item If $\gamma$ has an endpoint $o$ on a component $C$ of $\partial S$, then $\rho(\gamma)$ is obtained from $\gamma$ by moving $o$ to the next marked point on $C$ in the counterclockwise direction;
 \item If $\gamma$ has an endpoint at a puncture $p$, then $\rho(\gamma)$ is obtained from $\gamma$ by changing its tags at $p$.
\end{itemize}
\end{definition}

Throughout this paper, when we consider intersections of curves, we assume that they intersect transversally in a minimum number of points in $S \setminus M$. Our main subjects are two kinds of intersection numbers of tagged arcs defined in \cite[Definition 8.4]{FoST08} and \cite[Definition 3.3]{QZ17}.

\begin{definition}\label{def:intersection numbers}
Let $\gamma, \delta \in \bA_{\cS}$.
\begin{enumerate}
\item The \emph{FST-intersection number $(\gamma | \delta)$ of $\gamma$ and $\delta$} is defined by $A_{\gamma,\delta}+B_{\gamma,\delta}+C_{\gamma,\delta}+D_{\gamma,\delta}$, where
\begin{itemize}
 \item $A_{\gamma,\delta}$ is the number of intersection points of $\gamma$ and $\delta$ in $S \setminus M$;
 \item $B_{\gamma,\delta}=0$ unless $\gamma$ is a loop at $o \in M$, in which case $B_{\gamma,\delta}$ is the negative of the number of contractible triangles $\triangle$ defined as follows: Suppose that $\delta$ consecutively intersects with $\gamma$ at points $a$ and $b$. Then $\triangle$ consists of the segment of $\delta$ from $a$ to $b$, the segment of $\gamma$ from $o$ to $a$, and the segment of $\gamma$ from $o$ to $b$ (see Figure \ref{fig:B});
 \item $C_{\gamma,\delta}=0$ unless the underlying curves of $\gamma$ and $\delta$ coincide, in which case $C_{\gamma,\delta}=-1$;
 \item $D_{\gamma,\delta}$ is the number of ends of $\delta$ incident to an endpoint $e$ of $\gamma$ whose tags are different from one of $\gamma$ at $e$.
\end{itemize}
\item The \emph{QZ-intersection number $\Int(\gamma,\delta)$ of $\gamma$ and $\delta$} is defined by $A_{\gamma,\delta}+C_{\gamma,\delta}'+D_{\gamma,\delta}'$, where
\begin{itemize}
 \item $C_{\gamma,\delta}'=0$ unless $\gamma$ and $\delta$ form a pair of conjugate arcs, in which case $C_{\gamma,\delta}'=-1$;
 \item $D_{\gamma,\delta}'$ is the number of pairs of an end of $\gamma$ and an end of $\delta$ such that they are incident to a common puncture and their tags are different.
\end{itemize}
\end{enumerate}
\end{definition}

\begin{figure}[ht]
\begin{tikzpicture}[baseline=0mm]
 \coordinate(0)at(0,-1); \node at(0.3,-1){$o$}; \node at(-0.6,-0.5){$a$}; \node at(0.6,-0.5){$b$};
 \node at(0.7,0.5){$\gamma$}; \node at(1.2,-0.3){$\delta$}; \node at(0,-0.6){$\triangle$};
 \draw(0)to[out=-60,in=-90,relative](0,1); \draw(0)to[out=60,in=90,relative](0,1);
 \draw(-1,-0.3)--(1,-0.3);
 \fill(0)circle(0.07); \draw[dotted](0,0.4)circle(0.35);
\end{tikzpicture}
\caption{The triangle $\triangle$ contributes $-1$ to $B$ if it is contractible}
\label{fig:B}
\end{figure}
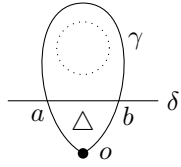

Note that FST-intersection numbers are not symmetric in general, while QZ-intersection numbers are symmetric.

Let $\gamma, \delta \in \bA_{\cS}$. By the definitions, $C_{\gamma,\delta}$, $D_{\gamma,\delta}$, $C_{\gamma,\delta}'$, and $D_{\gamma,\delta}'$ are zero if $\gamma$ and $\delta$ have no common endpoints. Moreover, $C_{\gamma,\delta}$ and $C_{\gamma,\delta}'$ are zero if the underlying curves of $\gamma$ and $\delta$ do not coincide. Thus Table \ref{table:nonzeroBC} lists all cases where any of them are not zero up to changing all tags at each puncture. In particular, $(\gamma | \gamma)=C_{\gamma,\gamma}=-1 \neq 0=C_{\gamma,\gamma}'=\Int(\gamma,\gamma)$. Moreover, if $\gamma$ and $\delta$ form a pair of conjugate arcs, then $(\gamma | \delta)=\Int(\gamma,\delta)=0$.

\renewcommand{\arraystretch}{1.8}
{\begin{table}[ht]
\begin{tabular}{c|c|c|c}
   &
\begin{tikzpicture}[baseline=0mm]
 \coordinate(0)at(0,-0.6); \coordinate(1)at(0,0.6);
 \draw(0)to[out=-60,in=-120,relative] node[fill=white,inner sep=2]{$\delta$}(1);
 \draw(0)to[out=60,in=120,relative] node[fill=white,inner sep=2]{$\gamma$}(1);
 \fill(0)circle(0.07); \fill(1)circle(0.07);
 \node at(0,-0.8) {};
\end{tikzpicture}
   &
\begin{tikzpicture}[baseline=0mm]
 \coordinate(0)at(0,-0.6); \coordinate(1)at(0,0.6);
 \draw(0)to[out=-60,in=-120,relative] node[pos=0.85]{\rotatebox{35}{$\bowtie$}} node[fill=white,inner sep=2]{$\delta$}(1);
 \draw(0)to[out=60,in=120,relative] node[fill=white,inner sep=2]{$\gamma$}(1);
 \fill(0)circle(0.07); \fill(1)circle(0.07);
\end{tikzpicture}
   &
\begin{tikzpicture}[baseline=0mm]
 \coordinate(0)at(0,-0.6); \coordinate(1)at(0,0.6);
 \draw(0)to[out=-60,in=-120,relative] node[pos=0.15]{\rotatebox{-35}{$\bowtie$}} node[pos=0.85]{\rotatebox{35}{$\bowtie$}} node[fill=white,inner sep=2]{$\delta$}(1);
 \draw(0)to[out=60,in=120,relative] node[fill=white,inner sep=2]{$\gamma$}(1);
 \fill(0)circle(0.07); \fill(1)circle(0.07);
\end{tikzpicture}
\\\hline
 $C_{\gamma,\delta}$ & -1 & -1 & -1
\\\hline
 $C'_{\gamma,\delta}$ & 0 & -1 & 0
\\\hline
 $C''_{\gamma,\delta}$ & -1 & 0 & -1
\end{tabular}\vspace{3mm}\\
\begin{tabular}{c|c|c|c|c|c|c}
   &
\begin{tikzpicture}[baseline=0mm]
 \coordinate(0)at(0,-0.6); \coordinate(1)at(0,0.6);
 \draw(0)to[out=-60,in=-120,relative] node[pos=0.85]{\rotatebox{35}{$\bowtie$}} node[fill=white,inner sep=2]{$\delta$}(1);
 \draw(0)to[out=60,in=120,relative] node[fill=white,inner sep=2]{$\gamma$}(1);
 \fill(0)circle(0.07); \fill(1)circle(0.07);
\end{tikzpicture}
   &
\begin{tikzpicture}[baseline=0mm]
 \coordinate(0)at(0,-0.6); \coordinate(1)at(0,0.6);
 \draw(0)to[out=-60,in=-120,relative] node[pos=0.15]{\rotatebox{-35}{$\bowtie$}} node[pos=0.85]{\rotatebox{35}{$\bowtie$}} node[fill=white,inner sep=2]{$\delta$}(1);
 \draw(0)to[out=60,in=120,relative] node[fill=white,inner sep=2]{$\gamma$}(1);
 \fill(0)circle(0.07); \fill(1)circle(0.07);
\end{tikzpicture}
   &
\begin{tikzpicture}[baseline=0mm]
 \coordinate(0)at(0,0);
 \draw(0)--node[pos=0.2]{$\bowtie$} node[left,pos=0.6]{$\delta$}(0,1);
 \draw(0)--node[left,pos=0.6]{$\gamma$}(0,-1);
 \fill(0)circle(0.07);
\end{tikzpicture}
   &
\begin{tikzpicture}[baseline=0mm]
 \coordinate(0)at(0,0);
 \draw(0)--node[pos=0.2]{$\bowtie$} node[left,pos=0.6]{$\delta$}(0,1);
 \draw(0)to[out=-80,in=-90,relative]node[left]{$\gamma$}(0,-1); \draw(0)to[out=80,in=90,relative](0,-1);
 \fill(0)circle(0.07); \draw[dotted](0,-0.5)circle(0.2);
 \node at(0,-1.1) {};
\end{tikzpicture}
   &
\begin{tikzpicture}[baseline=0mm]
 \coordinate(0)at(0,0);
 \draw(0)--node[left,pos=0.6]{$\gamma$} (0,-1);
 \draw(0)to[out=-80,in=-90,relative]node[pos=0.2]{\rotatebox{-40}{$\bowtie$}}(0,1); \draw(0)to[out=80,in=90,relative]node[pos=0.2]{\rotatebox{40}{$\bowtie$}}node[left]{$\delta$}(0,1);
 \fill(0)circle(0.07); \draw[dotted](0,0.55)circle(0.2);
\end{tikzpicture}
   &
\begin{tikzpicture}[baseline=0mm]
 \coordinate(0)at(0,0);
 \draw(0)to[out=-80,in=-90,relative] node[left]{$\gamma$} (0,-1); \draw(0)to[out=80,in=90,relative] (0,-1);
 \draw(0)to[out=-80,in=-90,relative] node[pos=0.2]{\rotatebox{-40}{$\bowtie$}} (0,1); \draw(0)to[out=80,in=90,relative] node[pos=0.2]{\rotatebox{40}{$\bowtie$}} node[left]{$\delta$}(0,1);
 \fill(0)circle(0.07); \draw[dotted](0,-0.5)circle(0.2); \draw[dotted](0,0.55)circle(0.2);
\end{tikzpicture}
\\\hline
 $D_{\gamma,\delta}$ & 1 & 2 & 1 & 1 & 2 & 2
\\\hline
 $D'_{\gamma,\delta}$ & 1 & 2 & 1 & 2 & 2 & 4
\\\hline
 $D''_{\gamma,\delta}$ & 0 & 0 & 0 & -1 & 0 & -2
\end{tabular}\vspace{3mm}
\caption{Cases that any of $C_{\gamma,\delta}$, $D_{\gamma,\delta}$, $C'_{\gamma,\delta}$, and $D'_{\gamma,\delta}$ are not zero}
\label{table:nonzeroBC}
\end{table}}

By the definitions, we can give a difference between the intersection numbers.

\begin{proposition}\label{prop:difference between intersection numbers}
Let $\gamma, \delta \in \bA_{\cS}$. Then we have
\[
(\gamma | \delta) = \Int(\gamma,\delta)+B_{\gamma,\delta}+C''_{\gamma,\delta}+D''_{\gamma,\delta},
\]
where
\begin{itemize}
 \item $B_{\gamma,\delta}$ is defined in Definition \ref{def:intersection numbers}(1);
 \item $C''_{\gamma,\delta}=0$ unless the underlying curves of $\gamma$ and $\delta$ coincide and they do not form a pair of conjugate arcs, in which case $C''_{\gamma,\delta}=-1$;
 \item $D''_{\gamma,\delta}=0$ unless $\gamma$ is a loop at a puncture $p$, in which case $D''_{\gamma,\delta}$ is the negative of the number of ends of $\delta$ incident to $p$ whose tags are different from ones of $\gamma$.
\end{itemize}
\end{proposition}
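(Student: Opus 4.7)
The plan is to reduce the identity to two independent equalities and verify each by a short case analysis at common endpoints.

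After substituting the definitions of $(\gamma\mid\delta)$ and $\Int(\gamma,\delta)$ and cancelling the common $A$- and $B$-terms (the latter appears explicitly on the right-hand side of the claimed equation), the identity becomes
\[
C_{\gamma,\delta}+D_{\gamma,\delta}=C'_{\gamma,\delta}+D'_{\gamma,\delta}+C''_{\gamma,\delta}+D''_{\gamma,\delta},
\]
which I would split as $C_{\gamma,\delta}=C'_{\gamma,\delta}+C''_{\gamma,\delta}$ and $D_{\gamma,\delta}=D'_{\gamma,\delta}+D''_{\gamma,\delta}$ and treat separately. The first equality is essentially by design: all three $C$-terms vanish unless the underlying curves of $\gamma$ and $\delta$ coincide, and in that case $C_{\gamma,\delta}=-1$, while the defining condition for $C'_{\gamma,\delta}=-1$ (namely, $\{\gamma,\delta\}$ is a pair of conjugate arcs) and for $C''_{\gamma,\delta}=-1$ (namely, $\{\gamma,\delta\}$ is not such a pair) are complementary; thus exactly one of $C'_{\gamma,\delta}$, $C''_{\gamma,\delta}$ equals $-1$ and the other vanishes.

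For the second equality I would argue puncture by puncture. All three of $D_{\gamma,\delta}, D'_{\gamma,\delta}, D''_{\gamma,\delta}$ vanish off the set of shared endpoints of $\gamma$ and $\delta$, so it suffices to fix a common puncture $p$ and verify a local identity. Write $t_\gamma$ for the tag of $\gamma$ at $p$ (well-defined since, even for a loop, both ends carry the same tag) and let $m$ be the number of ends of $\delta$ at $p$ whose tag differs from $t_\gamma$. If $\gamma$ is not a loop at $p$, it contributes a single end at $p$, so $p$ contributes $m$ to $D_{\gamma,\delta}$, $m$ to $D'_{\gamma,\delta}$, and $0$ to $D''_{\gamma,\delta}$. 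If $\gamma$ is a loop at $p$, then $D_{\gamma,\delta}$ still contributes $m$ (it counts ends of $\delta$ at the single endpoint $e=p$ of $\gamma$), while $D'_{\gamma,\delta}$ contributes $2m$ (pairing each of the two ends of $\gamma$ with each of the $m$ ends of $\delta$) and $D''_{\gamma,\delta}$ contributes $-m$, which together sum to $m$. Summing over all common endpoints yields $D_{\gamma,\delta}=D'_{\gamma,\delta}+D''_{\gamma,\delta}$.

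The main obstacle is the bookkeeping in the loop case: I must keep track that $D_{\gamma,\delta}$ regards a loop at $p$ as possessing the single endpoint $p$, so that an end of $\delta$ is counted once, whereas $D'_{\gamma,\delta}$ distinguishes the two ends of $\gamma$ in its pairing and thus acquires a factor of $2$ which $D''_{\gamma,\delta}$ is precisely engineered to cancel. The six representative subcases displayed in Table \ref{table:nonzeroBC} provide a direct sanity check on the local identity.
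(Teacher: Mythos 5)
Your proposal is correct and follows essentially the same route as the paper: after cancelling the $A$- and $B$-terms, the identity reduces to $C_{\gamma,\delta}=C'_{\gamma,\delta}+C''_{\gamma,\delta}$ and $D_{\gamma,\delta}=D'_{\gamma,\delta}+D''_{\gamma,\delta}$, which the paper reads off from Table \ref{table:nonzeroBC} and you re-derive by the corresponding direct case analysis (complementarity of the conjugate-pair condition for the $C$-terms, and the loop versus non-loop count at a shared puncture for the $D$-terms). Your explicit handling of the factor $2$ in the loop case matches the table entries, so no gap remains.
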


\begin{proof}
Table \ref{table:nonzeroBC} gives the equalities $C''_{\gamma,\delta}=C_{\gamma,\delta}-C'_{\gamma,\delta}$ and $D''_{\gamma,\delta}=D_{\gamma,\delta}-D'_{\gamma,\delta}$. Otherwise, all of them are zero as the above observation. Therefore, we get the desired equality from Definition \ref{def:intersection numbers}.
\end{proof}

We observe some properties of the values $B_{\gamma,\delta}$, $C''_{\gamma,\delta}$, and $D''_{\gamma,\delta}$.

\begin{lemma}\label{lem:observations}
Let $\gamma, \delta \in \bA_{\cS}$.
\begin{enumerate}
\item The values $B_{\gamma,\delta}$, $C''_{\gamma,\delta}$, and $D''_{\gamma,\delta}$ are non-positive and $(\gamma|\delta)\le\Int(\gamma,\delta)$.
\item If $\gamma$ is not a loop, then $B_{\gamma,\delta}=D''_{\gamma,\delta}=0$.
\item There is a $\gamma'\in\bA_{\cS}\setminus\{\gamma\}$ such that $C''_{\gamma,\gamma'}\neq 0$ if and only if $\gamma$ connects punctures.
\end{enumerate}
\end{lemma}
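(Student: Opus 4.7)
The plan is to deduce all three parts directly from Definition \ref{def:intersection numbers}, Proposition \ref{prop:difference between intersection numbers}, and the case-by-case information in Table \ref{table:nonzeroBC}.

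Parts (1) and (2) are essentially a matter of reading the definitions. For (1), $B_{\gamma,\delta}$ and $D''_{\gamma,\delta}$ are defined as negatives of counts of triangles or ends, while $C''_{\gamma,\delta}\in\{0,-1\}$ by construction; the inequality $(\gamma|\delta)\le\Int(\gamma,\delta)$ is then immediate from the formula supplied by Proposition \ref{prop:difference between intersection numbers}. For (2), $B_{\gamma,\delta}$ vanishes unless $\gamma$ is a loop, and $D''_{\gamma,\delta}$ vanishes unless $\gamma$ is a loop at a puncture, so the hypothesis kills both at once.

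The substantive claim is (3), which I would settle by a short case analysis on the endpoints of $\gamma$. From the definition of $C''$, a tagged arc $\gamma'\ne\gamma$ with $C''_{\gamma,\gamma'}\ne 0$ is precisely one whose underlying curve coincides with that of $\gamma$ and which does not form a pair of conjugate arcs with $\gamma$; by inspecting the first row of Table \ref{table:nonzeroBC}, this forces $\gamma'$ to differ from $\gamma$ in the tags at \emph{both} endpoints. Since ends incident to $\partial S$ are required to be plain, such a double retagging is available exactly when both endpoints of $\gamma$ are punctures, i.e.\ when $\gamma$ connects punctures; and in that case the simultaneous tag-flip at both endpoints does produce an admissible $\gamma'\ne\gamma$ with $C''_{\gamma,\gamma'}=-1$.

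The only point I expect to handle with a little care is the loop-at-a-puncture case in (3): the rule that both ends of a loop are tagged identically restricts the admissible taggings, but it still leaves the two loops ``plain--plain'' and ``notched--notched'' at that puncture, which differ at both ends and therefore provide the required $\gamma'$.
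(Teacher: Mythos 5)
Your proposal is correct and follows essentially the same route as the paper, whose proof simply declares the assertions immediate from the definitions (together with Table \ref{table:nonzeroBC}); you have merely made explicit the unpacking of $B$, $C''$, $D''$ and the case analysis for (3). In particular, your handling of the loop-at-a-puncture case, where the tag-flipped loop is never conjugate to the original because both ends of a loop carry the same tag, matches the intended reading of ``connects punctures'' (both endpoints are punctures, possibly equal) and is exactly the point the paper leaves implicit.
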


\begin{proof}
The assertions immediate follow from the definitions.
\end{proof}

\begin{lemma}\label{lem:loop B}
If $\gamma$ is a loop of $\cS$ at $m\in M\cap\partial S$, then there is a $\delta\in\bA_{\cS}$ such that $B_{\gamma,\delta}\neq 0$.
\end{lemma}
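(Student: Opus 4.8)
\emph{Plan.} The plan is to prove this geometric statement by a local analysis near $m$ producing an intersection pattern of exactly the type in Figure~\ref{fig:B}. The point to keep in mind is that such a pattern \emph{cannot be undone by isotopy}: if a tagged arc $\delta$ meets $\gamma$ at two points $a,b$ that are consecutive along $\delta$, with $a$ on one strand of $\gamma$ at $m$ and $b$ on the other, and if the triangle $\triangle$ cut out by $\delta|_{a\to b}$ together with the two segments of $\gamma$ from $m$ is contractible and contains no puncture, then the only candidate bigon cancelling $a$ and $b$ is $\triangle$ itself; but $\triangle$ has the marked point $m\in\partial S$ on its boundary, and no admissible isotopy moves an arc across a marked point or across $\partial S$. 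Hence $a,b$ persist in minimal position and $\triangle$ contributes $-1$ to $B_{\gamma,\delta}$.

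So it suffices to exhibit one $\delta\in\bA_{\cS}$ realizing this picture. I would take $\delta$ to be an arc which, together with a piece of $\partial S$ through $m$, bounds a small region $\Delta_0$ whose only marked point is $m$ and which contains no puncture. Concretely: if the boundary component of $m$ carries at least three marked points, take $\delta$ joining the two neighbours of $m$ and cutting off the triangle they span together with $m$; if it carries exactly two marked points, take an appropriate loop at the other one; if $m$ is alone on its component but $\cS$ has some marked point $q$ elsewhere (a puncture or a point on another boundary component), take a loop at $q$ enclosing that component. In every case $\delta$ is a genuine tagged arc: the region $\Delta_0$ it cuts off is a disk bounded by three arcs, or an annulus with one marked point on each boundary circle — neither of which is a forbidden cut-out — and the complementary region contains the material enclosed by $\gamma$, which is nonempty because $\gamma$ is a valid loop, so it is not a forbidden cut-out either.

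It then remains to count intersections. Both strands of $\gamma$ emanate from $m\in\Delta_0$, yet $\gamma$ has to encircle the material lying outside $\Delta_0$, so each strand must leave $\Delta_0$ by crossing $\delta$. Putting everything in minimal position, the two nearest such crossings $a,b$ — one on each strand — are consecutive along $\delta$, and the triangle they determine together with the short $\gamma$-segments back to $m$ lies inside $\Delta_0$, hence is contractible and puncture-free. By the first paragraph, $B_{\gamma,\delta}\le -1\ne 0$, which is the claim.

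The case I expect to be the main obstacle is the degenerate one in which $m$ is the only marked point of $\cS$ on its boundary component \emph{and} $\cS$ has no punctures: then $\cS$ has positive genus, every arc of $\cS$ is a loop at $m$, and none of the choices of $\delta$ above is available. There one has to build a suitable loop $\delta$ at $m$ directly from the topology — using that the region enclosed by $\gamma$, not being a monogon with at most one puncture, has positive genus or an additional boundary component — so that the two strands of $\delta$ at $m$ interleave those of $\gamma$ while the rest of $\delta$ stays close enough to $m$ to produce a contractible triangle. Checking the validity of $\delta$ and the minimality of the intersection in this situation is the only genuinely delicate part.
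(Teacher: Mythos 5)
Your local argument is sound precisely when the cut-off region $\Delta_0$ is a disk: there the two initial segments of $\gamma$ and the $\delta$-segment between their first crossings bound a subdisk of $\Delta_0$, any chord of $\gamma$ inside it would create a marked-point-free bigon contradicting minimal position, so the crossings are consecutive and the triangle is contractible. This settles the cases where the boundary component $C$ containing $m$ carries at least two marked points. The gap lies in your third case, where $m$ is alone on $C$ and you take $\delta$ to be a loop at a marked point $q$ elsewhere enclosing $C$: then $\Delta_0$ is an annulus, and the inference ``the triangle lies inside $\Delta_0$, hence is contractible'' breaks down, since a closed curve in the closure of an annulus may be parallel to its core. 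Concretely, let $\cS$ be an annulus with one puncture $p$, with $m$ the unique marked point on one boundary component $C$ and $q$ the unique marked point on the other component $C'$; let $\gamma$ be the loop at $m$ encircling $C'$ but not $p$, and let $\delta$ be the obvious loop at $q$ enclosing $C$ as you prescribe (so the region between $\delta$ and $C$ contains only $m$ and no puncture; such $\delta$ is the boundary of a neighbourhood of $C'$, an arc to $p$, and $p$). In minimal position $\gamma$ and $\delta$ meet in exactly two points, both on the same strand of $\gamma$; the unique pair of consecutive crossings bounds, together with the two $\gamma$-segments back to $m$, a curve consisting of almost all of $\gamma$ plus the piece of $\delta$ running around $p$. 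This curve separates $C$ from $C'\cup\{p\}$, hence is boundary-parallel and not contractible, so $B_{\gamma,\delta}=0$. Your first paragraph does not rescue this: the two ``first'' crossings, one from each end of $\gamma$, may sit on a single strand, and one of the two $\gamma$-segments back to $m$ is then the long way around, so nothing forces a small triangle at $m$.

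The second gap is the one you flag yourself: the case where $m$ is the only marked point of $\cS$ is left without a construction, and it does not reduce to the other cases. The paper's proof splits differently and avoids both problems: if $\gamma$ is homotopic to $C$, then $C$ has another marked point and one takes the parallel loop based there (your disk picture); if $\gamma$ is not homotopic to $C$ --- which covers both your unfinished case and the counterexample above --- one takes $\delta$ to be the image of $\gamma$ under the Dehn twist along a simple closed curve parallel to $C$ (Figure \ref{fig:construction Bneq0}), and this twisted copy of $\gamma$ crosses $\gamma$ so as to cut out a contractible triangle at $m$. To repair your proof you would need an analogous construction for every loop whose basepoint is the only marked point on its boundary component, not merely in the case $M=\{m\}$.
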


\begin{proof}
Let $C$ be a connected component of $\partial S$ containing $m$. If $\gamma$ is homotopic to $C$, then there are other marked points on $C$. We take a loop $\delta$ at such a marked point which is homotopic to $C$. Then we have $B_{\gamma,\delta}\neq 0$ (see the left diagram of Figure \ref{fig:construction Bneq0}).

Suppose that $\gamma$ is not homotopic to $C$. We take $\delta$ as a tagged arc obtained from $\gamma$ by applying the Dehn twist along a simple closed curve which is homotopic to $C$ (see e.g. \cite{Ma16} for the definition of Dehn twists). Then we get $B_{\gamma,\delta}\neq 0$ (see the right diagrams of Figure \ref{fig:construction Bneq0}).
\end{proof}

\begin{figure}[ht]
\begin{tikzpicture}[baseline=0mm]
 \coordinate(l)at(-0.3,0); \coordinate(r)at(0.3,0); \node[left]at(l){$m$};
 \draw[pattern=north east lines](0,0)circle(0.3);
 \draw(l)..controls(-0.5,0.7)and(0.7,0.8)..(0.7,0);
 \draw(l)..controls(-0.5,-0.7)and(0.7,-0.8)..(0.7,0)node[fill=white,inner sep=2]{$\gamma$};
 \draw(r)..controls(0.7,0.8)and(-1,1)..(-1,0);
 \draw(r)..controls(0.7,-0.8)and(-1,-1)..(-1,0)node[fill=white,inner sep=2]{$\delta$};
 \fill(l)circle(0.07); \fill(r)circle(0.07);
\end{tikzpicture}
\hspace{10mm}
\begin{tikzpicture}[baseline=0mm]
 \coordinate(l)at(-0.3,0); \node[left]at(l){$m$};
 \draw[pattern=north east lines](0,0)circle(0.3); \draw[dotted](-1.5,0)circle(0.3);
 \draw(l)..controls(-1.5,1)and(-2,0.5)..(-2,0);
 \draw(l)..controls(-1.5,-1)and(-2,-0.5)..(-2,0)node[fill=white,inner sep=2]{$\gamma$};
 \fill(l)circle(0.07);
 \draw(-0.2,0)circle(0.7); \node[fill=white,inner sep=2]at(-0.2,0.7){$\ell$};
\end{tikzpicture}
$\xrightarrow{\text{Dehn twist along $\ell$}}$
\begin{tikzpicture}[baseline=0mm]
 \coordinate(l)at(-0.3,0); \node[left]at(l){$m$};
 \draw[pattern=north east lines](0,0)circle(0.3); \draw[dotted](-1.5,0)circle(0.3);
 \draw(l)..controls(-1.5,1)and(-2,0.5)..(-2,0);
 \draw(l)..controls(-1.5,-1)and(-2,-0.5)..(-2,0)node[fill=white,inner sep=2]{$\gamma$};
 \draw(l)..controls(-0.4,0.6)and(0.5,0.6)..(0.5,0);
 \draw(0.5,0)..controls(0.5,-0.8)and(-0.9,-0.8)..(-0.9,0);
 \draw(-0.9,0)..controls(-0.9,1)and(-2.3,1)..(-2.3,0);
 \draw(l)..controls(-0.5,0.7)and(0.6,0.7)..(0.6,0);
 \draw(0.6,0)..controls(0.6,-1.1)and(-0.7,-0.7)..(-0.9,-0.6);
 \draw(-0.9,-0.6)..controls(-1.1,-0.8)and(-2.3,-1.1)..(-2.3,0);
 \fill(l)circle(0.07); \node at(0.8,0){$\delta$};
\end{tikzpicture}
\caption{Constructions of $\delta\in\bA_{\cS}$ satisfying $B_{\gamma,\delta}\neq 0$ for a loop $\gamma$ at $m \in M\cap\partial S$}
\label{fig:construction Bneq0}
\end{figure}
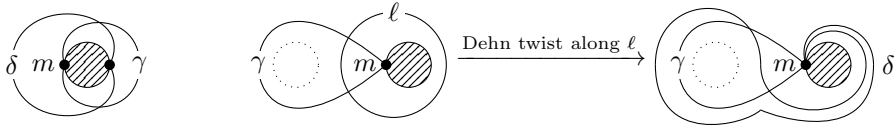

\begin{lemma}\label{lem:once-punctured cases}
Suppose that $\cS$ is a closed surface with exactly one puncture. Let $\gamma\in\bA_{\cS}$.
\begin{enumerate}
\item If the genus of $S$ is one, then we have $B_{\gamma,\delta}=0$ for any $\delta \in \bA_{\cS}$.
\item If the genus of $S$ is two or more, then there is a $\delta\in\bA_{\cS}$ such that $B_{\gamma,\delta}\neq 0$.
\end{enumerate}
\end{lemma}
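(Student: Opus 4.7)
My plan is to argue each part via a universal cover argument for part (1) and a Dehn twist construction for part (2).

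For part (1), I would identify $\cS$ with $\mathbb{R}^2/\mathbb{Z}^2 \setminus \{[(0,0)]\}$, with puncture at the origin. Every tagged arc $\gamma \in \bA_\cS$ admits a Euclidean geodesic representative, namely a straight-line segment in the universal cover $\mathbb{R}^2$ along its primitive homology vector $(m_\gamma, n_\gamma)$, and such representatives realize the minimum number of transverse intersections. Crucially, at each lift $\tilde p$ of $p$, the two lifts of $\gamma$ at $\tilde p$ point in exactly opposite directions $\pm(m_\gamma, n_\gamma)$. Suppose for contradiction that a contractible triangle $\triangle$ existed for $\gamma$ and some $\delta$ with primitive vector $(m_\delta, n_\delta)$. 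Lifting the boundary of $\triangle$ starting at a lift $\tilde p_0$ of $p$ and traversing the three sides, the total displacement equals $(t_a + t_b)(m_\gamma, n_\gamma) + r(m_\delta, n_\delta)$, where $t_a, t_b \in (0,1)$ parametrize the two $\gamma$-segments (each measured from the vertex lift at which they emanate) and $r \ne 0$ is the signed length of the $\delta$-side. Contractibility forces this displacement to vanish; but $t_a + t_b > 0$, and $(m_\gamma, n_\gamma), (m_\delta, n_\delta)$ are linearly independent (otherwise $\delta = \gamma$, contradicting transverse intersection). This is a contradiction, so $B_{\gamma, \delta} = 0$.

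For part (2), I would adapt the Dehn twist construction used in the proof of Lemma \ref{lem:loop B}. Since $g \ge 2$, the surface $S$ contains a simple closed curve $\ell \subset S \setminus \{p\}$ bounding, on one side, a subsurface $\Sigma_\ell$ of positive genus not containing $p$. One can additionally arrange $|\gamma \cap \ell| \ge 2$ transversely by choosing $\ell$ such that $\gamma$ passes through $\Sigma_\ell$. Take $\delta := T_\ell(\gamma)$, the Dehn twist of $\gamma$ along $\ell$. By an analysis analogous to the right-hand diagram of Figure \ref{fig:construction Bneq0}, the twist creates new transverse intersections between $\gamma$ and $\delta$ near $\ell$ which, combined with short initial segments of $\gamma$ and $\delta$ near $p$, bound a triangular disk in $\cS$ with vertex at $p$. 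Contractibility of the resulting triangle uses the positive genus of $\Sigma_\ell$: the Dehn-twist detour is supported on a subsurface disjoint from $p$, so the enclosed region near $p$ projects to a disk rather than a homotopically nontrivial region.

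The main obstacle is the rigorous verification in part (2): checking that the Dehn-twist construction indeed produces a contractible triangle requires a careful local-to-global analysis of how the twist modifies the intersection pattern and of the homotopy class of the triangle's boundary. The construction fails for the once-punctured torus ($g=1$) precisely because no simple closed curve disjoint from $p$ bounds a positive-genus subsurface of $S \setminus \{p\}$ — every such curve is either null-homotopic (bounding a disk or a punctured disk) or essential and non-separating in the torus — which is consistent with, and indeed parallels, the obstruction identified in part (1).
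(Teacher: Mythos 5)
Your part (1) is fine and is a genuinely different route: the paper simply quotes \cite{RS17} for the plain-tagged case and notes that $B_{\gamma,\delta}$ ignores tags, whereas you give a direct argument with straight-line representatives on the flat once-punctured torus. Two small points to tighten: the coefficient of $(m_\gamma,n_\gamma)$ in the lifted displacement need not be $t_a+t_b$ (the two $\gamma$-sides may be traversed from opposite ends of $\gamma$, giving a difference of parameters), but this is harmless since linear independence already forces the $\delta$-coefficient $r\neq 0$ to vanish, a contradiction; and you should say explicitly that straight-line representatives through the puncture realize minimal position, since $B$ is computed on minimal-position representatives.

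Part (2), however, has a genuine gap, and it is exactly the step you flag as ``the main obstacle.'' The assertion that $\delta=T_\ell(\gamma)$ produces a contractible triangle is not just unverified -- for the construction as stated it is false in general. Write $\gamma\cap \overline{S\setminus\Sigma_\ell}=u\cup v$ (the two segments from $p$ to $\ell$) and $\gamma\cap\Sigma_\ell=m$. The intersections of $T_\ell(\gamma)$ with $\gamma$ occur on the two twisted strands inside an annular neighbourhood of $\ell$, and the triangles attached to consecutive such intersections have boundary homotopic either to $u\cdot(\text{arc of }\ell)\cdot \bar v$ or to $\gamma$ itself. The latter is essential, and the former is null-homotopic only for special $\gamma$: since $\ell$ is essential and separating, $u\cdot(\text{arc of }\ell)\cdot\bar v$ is null-homotopic in $S$ iff it is null-homotopic in the $p$-side of $\ell$, which fails as soon as $u$ or $v$ winds around a handle on that side (already for $g=2$ with both sides of genus one one can choose such a $\gamma$ with $i(\gamma,\ell)=2$). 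So for a fixed ``generic'' $\ell$ one can have $B_{\gamma,T_\ell(\gamma)}=0$; to succeed one must choose the auxiliary arc adapted to the given $\gamma$, and that is precisely the nontrivial content the paper supplies: in the $4g$-gon model it exhibits explicit arcs $\delta_1,\dots,\delta_7$ containing a long strand parallel to the loop around the puncture, covering the angular sector between the two ends of $\gamma$ at $p$, so that the triangle is a visibly embedded disk at the puncture (the true analogue of the boundary-parallel twist in Lemma \ref{lem:loop B}), together with a case analysis on where those ends sit. Your sketch replaces this by a twist along a separating curve far from $p$, and the phrase ``the enclosed region near $p$ projects to a disk'' has no argument behind it; the positive genus of $\Sigma_\ell$ does not control the homotopy class of the triangle, which lives on the other side of $\ell$. (The existence of $\ell$ with $i(\gamma,\ell)\ge 2$ also needs a word -- e.g.\ take the boundary of a genus-one neighbourhood of a dual handle pair meeting $\gamma$ -- but that is minor compared with the contractibility issue.)
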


\begin{proof}
The claim (1) was proved in \cite[Subsection 4.3]{RS17} in the case that all ends of $\gamma$ and $\delta$ are tagged plain. Since $B_{\gamma,\delta}$ is independent of their tags, it holds in general cases.

For (2), we can take a desired $\delta$ concretely. In fact, we consider the case that the genus of $S$ is two. Then $\cS$ is described by the octahedron $P_8$ as follows:
\[
 \cS=
\begin{tikzpicture}[baseline=0mm,scale=1.8]
 \coordinate(u)at(0,0.5);\coordinate(d)at(0,-0.5);
 \coordinate(cl)at(-0.3,0.1);\coordinate(cr)at(0.3,0.1);
 \coordinate(l)at(-0.5,0);\coordinate(r)at(0.5,0);
 \coordinate(lu)at(-1,0.7);\coordinate(ld)at(-1,-0.7);
 \coordinate(ll)at(-1.3,0.1);\coordinate(lr)at(-1.1,0);
 \coordinate(ru)at(1,0.7);\coordinate(rd)at(1,-0.7);
 \coordinate(rr)at(1.3,0.1);\coordinate(rl)at(1.1,0);
 \coordinate(p)at(0,-0.2);
 \draw(u) ..controls(-0.2,0.5)and(-0.5,0.7)..(lu); \draw(lu) ..controls(-2,0.6)and(-2,-0.6)..(ld); \draw(d) ..controls(-0.2,-0.5)and(-0.5,-0.7)..(ld);
 \draw(u) ..controls(0.2,0.5)and(0.5,0.7)..(ru); \draw(ru) ..controls(2,0.6)and(2,-0.6)..(rd); \draw(d) ..controls(0.2,-0.5)and(0.5,-0.7)..(rd);
 \draw(ll) ..controls(-1,-0.1)and(-0.6,-0.1)..(cl);
 \draw(rr) ..controls(1,-0.1)and(0.6,-0.1)..(cr);
 \draw(lr) ..controls(-0.9,0.1)and(-0.7,0.1)..(l);
 \draw(rl) ..controls(0.9,0.1)and(0.7,0.1)..(r);
 \draw(p) circle (1.5mm);
 \draw[blue] (p) to [out=-150,in=-90]node[fill=white,inner sep=2,pos=0.6]{$4$} (-1.5,0.1); \draw[blue] (p) to [out=90,in=90] (-1.5,0.1);
 \draw[blue] (p) to [out=-30,in=-90]node[fill=white,inner sep=2,pos=0.6]{$1$} (1.5,0.1); \draw[blue] (p) to [out=90,in=90] (1.5,0.1);
 \draw[blue] (p) to [out=-120,in=10] (-0.5,-0.63); \draw[blue] (p) to [out=150,in=0] (-0.5,0)node[below]{$3$};
 \draw[blue,dotted] (-0.5,-0.63) to [out=-170,in=180] (-0.5,0);
 \draw[blue] (p) to [out=-60,in=170] (0.5,-0.63); \draw[blue] (p) to [out=30,in=180] (0.5,0)node[below]{$2$};
 \draw[blue,dotted] (0.5,-0.63) to [out=-10,in=0] (0.5,0);
 \fill (p) circle (0.5mm);
 \node at(0,0.1) {$c_1$}; \node at(0.2,0) {$c_2$}; \node at(0.28,-0.2) {$c_3$}; \node at(0.25,-0.4) {$c_4$};
 \node at(0,-0.45) {$c_5$}; \node at(-0.2,0) {$c_8$}; \node at(-0.28,-0.2) {$c_7$}; \node at(-0.25,-0.4) {$c_6$};
\end{tikzpicture}
 \hspace{7mm}
 P_8=
\begin{tikzpicture}[baseline=0mm,scale=1.3]
 \coordinate(0)at(0,0);
 \coordinate(u)at(90:1); \coordinate(d)at(-90:1); \coordinate(r)at(0:1); \coordinate(l)at(180:1);
 \coordinate(ru)at(45:1); \coordinate(rd)at(-45:1); \coordinate(lu)at(135:1); \coordinate(ld)at(-135:1);
 \coordinate(2)at(180:1.5);
 \draw(u)--node{\small\rotatebox{-22.5}{$>$}}node[above right=-1.5]{$1$}(ru)--node{\small\rotatebox{-67.5}{$>$}}node[above right=-1.5]{$2$}(r)--node{\small\rotatebox{67.5}{$>$}}node[below right=-1.5]{$1$}(rd)--node{\small\rotatebox{22.5}{$>$}}node[below right=-1.5]{$2$}(d)--node{\small\rotatebox{-22.5}{$<$}}node[below left=-1.5]{$3$}(ld)--node{\small\rotatebox{-67.5}{$<$}}node[below left=-1.5]{$4$}(l)--node{\small\rotatebox{67.5}{$<$}}node[above left=-1.5]{$3$}(lu)--node{\small\rotatebox{22.5}{$<$}}node[above left=-1.5]{$4$}(u);
 \draw(u)+(-22.5:0.2)arc(-22.5:-157.5:0.2); \draw(ru)+(-67.5:0.2)arc(-67.5:-202.5:0.2); \draw(r)+(-112.5:0.2)arc(-112.5:-247.5:0.2); \draw(rd)+(-157.5:0.2)arc(-157.5:-292.5:0.2);
 \draw(d)+(-202.5:0.2)arc(-202.5:-337.5:0.2); \draw(lu)+(22.5:0.2)arc(22.5:-112.5:0.2); \draw(l)+(67.5:0.2)arc(67.5:-67.5:0.2); \draw(ld)+(112.5:0.2)arc(112.5:-22.5:0.2);
 \node at(90:1.25) {$p_1$}; \node at(45:1.25) {$p_4$}; \node at(0:1.25) {$p_3$}; \node at(-45:1.25) {$p_2$}; \node at(-90:1.25) {$p_5$}; \node at(-135:1.25) {$p_8$}; \node at(-180:1.25) {$p_7$}; \node at(135:1.25) {$p_6$};
 \node at(90:0.6) {$c_1$}; \node at(45:0.6) {$c_4$}; \node at(0:0.6) {$c_3$}; \node at(-45:0.6) {$c_2$}; \node at(-90:0.6) {$c_5$}; \node at(-135:0.6) {$c_8$}; \node at(-180:0.6) {$c_7$}; \node at(135:0.6) {$c_6$};
 \fill(u) circle (0.07); \fill (ru) circle (0.07); \fill(r)circle(0.07); \fill(rd) circle (0.07); \fill(d)circle(0.07); \fill (ld) circle (0.07); \fill(l) circle (0.07); \fill (lu) circle (0.07);
\end{tikzpicture}
\]
where $c_1,\ldots,c_8$ are segments of a simple closed curve which is contracted to a puncture, and the edges of $P_8$ with the same numbers are identified along the directions. In $P_8$, there are the connected segments $\gamma'$ and $\gamma''$ of $\gamma$ containing endpoints of $\gamma$, where they may be identified. We assume that the endpoints of $\gamma'$ and $\gamma''$ are $p_i$ and $p_j$ for $i \le j$, respectively. Then $\gamma'$ and $\gamma''$ intersect with $c_i$ and $c_j$, and let $q_i$ and $q_j$ be their intersection points, respectively. If $\delta$ contains the curve $c_i c_{i+1} \cdots c_j$ or $c_j  \cdots c_n c_1 \cdots c_i$ in $\cS$, then $\delta$ consecutively intersects with $\gamma$ at $q_i$ and $q_j$. Then the segment of $\delta$ from $q_i$ to $q_j$, the segment of $\gamma$ from $p$ to $q_i$, and the segment of $\gamma$ from $p$ to $q_j$ form a contractible triangle, thus we have $B_{\gamma,\delta} \neq 0$. That is, we only need to give such $\delta$. We consider four tagged arcs
\[
\begin{tikzpicture}[baseline=0mm,scale=1.3]
 \coordinate(0)at(0,0); \node[blue] at(0) {$\delta_1$};
 \coordinate(u)at(90:1); \coordinate(d)at(-90:1); \coordinate(r)at(0:1); \coordinate(l)at(180:1);
 \coordinate(ru)at(45:1); \coordinate(rd)at(-45:1); \coordinate(lu)at(135:1); \coordinate(ld)at(-135:1);
 \coordinate(2)at(180:1.5);
 \draw(u)--node{\small\rotatebox{-22.5}{$>$}}node[above right=-1.5]{$1$}(ru)--node{\small\rotatebox{-67.5}{$>$}}node[above right=-1.5]{$2$}(r)--node{\small\rotatebox{67.5}{$>$}}node[below right=-1.5]{$1$}(rd)--node{\small\rotatebox{22.5}{$>$}}node[below right=-1.5]{$2$}(d)--node{\small\rotatebox{-22.5}{$<$}}node[below left=-1.5]{$3$}(ld)--node{\small\rotatebox{-67.5}{$<$}}node[below left=-1.5]{$4$}(l)--node{\small\rotatebox{67.5}{$<$}}node[above left=-1.5]{$3$}(lu)--node{\small\rotatebox{22.5}{$<$}}node[above left=-1.5]{$4$}(u);
 \draw[blue] (ld)+(-22.5:0.2)--(u); \draw[blue] (ld)+(-22.5:0.25)--(u);
 \draw[blue] (u)+(-22.5:0.2)--($(d)!0.5!(rd)$); \draw[blue] (u)+(-22.5:0.25)..controls(45:0.5)..($(ru)!0.5!(r)$);
 \draw[blue] (ru)+(-67.5:0.2)arc(-67.5:-202.5:0.2); \draw[blue] (r)+(-112.5:0.2)arc(-112.5:-247.5:0.2); \draw[blue] (rd)+(-157.5:0.2)arc(-157.5:-292.5:0.2);
 \draw[blue] (ru)+(-67.5:0.25)arc(-67.5:-202.5:0.25); \draw[blue] (r)+(-112.5:0.25)arc(-112.5:-247.5:0.25); \draw[blue] (rd)+(-157.5:0.25)arc(-157.5:-292.5:0.25);
 \draw[blue] (d)+(-202.5:0.2)arc(-202.5:-337.5:0.2); \draw[blue] (lu)+(22.5:0.2)arc(22.5:-112.5:0.2); \draw[blue] (l)+(67.5:0.2)arc(67.5:-67.5:0.2);
 \draw[blue] (d)+(-202.5:0.25)arc(-202.5:-337.5:0.25); \draw[blue] (lu)+(22.5:0.25)arc(22.5:-112.5:0.25); \draw[blue] (l)+(67.5:0.25)arc(67.5:-67.5:0.25);
 \fill(u) circle (0.07); \fill(ru) circle (0.07); \fill(r) circle (0.07); \fill(rd) circle (0.07); \fill(d) circle (0.07); \fill(ld) circle (0.07); \fill(l) circle (0.07); \fill(lu) circle (0.07);
\end{tikzpicture},
   \hspace{3mm}%
\begin{tikzpicture}[baseline=0mm,scale=1.3]
 \coordinate(0)at(0,0); \node[blue] at(0) {$\delta_2$};
 \coordinate(u)at(90:1); \coordinate(d)at(-90:1); \coordinate(r)at(0:1); \coordinate(l)at(180:1);
 \coordinate(ru)at(45:1); \coordinate(rd)at(-45:1); \coordinate(lu)at(135:1); \coordinate(ld)at(-135:1);
 \coordinate(2)at(180:1.5);
 \draw(u)--node{\small\rotatebox{-22.5}{$>$}}node[above right=-1.5]{$1$}(ru)--node{\small\rotatebox{-67.5}{$>$}}node[above right=-1.5]{$2$}(r)--node{\small\rotatebox{67.5}{$>$}}node[below right=-1.5]{$1$}(rd)--node{\small\rotatebox{22.5}{$>$}}node[below right=-1.5]{$2$}(d)--node{\small\rotatebox{-22.5}{$<$}}node[below left=-1.5]{$3$}(ld)--node{\small\rotatebox{-67.5}{$<$}}node[below left=-1.5]{$4$}(l)--node{\small\rotatebox{67.5}{$<$}}node[above left=-1.5]{$3$}(lu)--node{\small\rotatebox{22.5}{$<$}}node[above left=-1.5]{$4$}(u);
 \draw[blue] (rd)+(-157.5:0.2)--(u); \draw[blue] (rd)+(-157.5:0.25)--(u);
 \draw[blue] (u)+(-157.5:0.2)--($(d)!0.5!(ld)$); \draw[blue] (u)+(-157.5:0.25)..controls(135:0.5)..($(lu)!0.5!(l)$);
 \draw[blue] (ru)+(-67.5:0.2)arc(-67.5:-202.5:0.2); \draw[blue] (r)+(-112.5:0.2)arc(-112.5:-247.5:0.2);
 \draw[blue] (ru)+(-67.5:0.25)arc(-67.5:-202.5:0.25); \draw[blue] (r)+(-112.5:0.25)arc(-112.5:-247.5:0.25);
 \draw[blue] (d)+(-202.5:0.2)arc(-202.5:-337.5:0.2); \draw[blue] (lu)+(22.5:0.2)arc(22.5:-112.5:0.2); \draw[blue] (l)+(67.5:0.2)arc(67.5:-67.5:0.2); \draw[blue] (ld)+(112.5:0.2)arc(112.5:-22.5:0.2);
 \draw[blue] (d)+(-202.5:0.25)arc(-202.5:-337.5:0.25); \draw[blue] (lu)+(22.5:0.25)arc(22.5:-112.5:0.25); \draw[blue] (l)+(67.5:0.25)arc(67.5:-67.5:0.25); \draw[blue] (ld)+(112.5:0.25)arc(112.5:-22.5:0.25);
 \fill(u) circle (0.07); \fill (ru) circle (0.07); \fill(r)circle(0.07); \fill(rd) circle (0.07); \fill(d)circle(0.07); \fill(ld) circle (0.07); \fill(l) circle (0.07); \fill (lu) circle (0.07);
\end{tikzpicture},
   \hspace{3mm}%
\begin{tikzpicture}[baseline=0mm,scale=1.3]
 \coordinate(0)at(0,0); \node[blue] at(0) {$\delta_5$};
 \coordinate(u)at(90:1); \coordinate(d)at(-90:1); \coordinate(r)at(0:1); \coordinate(l)at(180:1);
 \coordinate(ru)at(45:1); \coordinate(rd)at(-45:1); \coordinate(lu)at(135:1); \coordinate(ld)at(-135:1);
 \coordinate(2)at(180:1.5);
 \draw(u)--node{\small\rotatebox{-22.5}{$>$}}node[above right=-1.5]{$1$}(ru)--node{\small\rotatebox{-67.5}{$>$}}node[above right=-1.5]{$2$}(r)--node{\small\rotatebox{67.5}{$>$}}node[below right=-1.5]{$1$}(rd)--node{\small\rotatebox{22.5}{$>$}}node[below right=-1.5]{$2$}(d)--node{\small\rotatebox{-22.5}{$<$}}node[below left=-1.5]{$3$}(ld)--node{\small\rotatebox{-67.5}{$<$}}node[below left=-1.5]{$4$}(l)--node{\small\rotatebox{67.5}{$<$}}node[above left=-1.5]{$3$}(lu)--node{\small\rotatebox{22.5}{$<$}}node[above left=-1.5]{$4$}(u);
 \draw[blue] (ru)+(157.5:0.2)--(d); \draw[blue] (ru)+(157.5:0.25)--(d);
 \draw[blue] (d)+(157.5:0.2)--($(u)!0.5!(lu)$); \draw[blue] (d)+(157.5:0.25)..controls(-135:0.5)..($(ld)!0.5!(l)$);
 \draw[blue] (r)+(-112.5:0.2)arc(-112.5:-247.5:0.2); \draw[blue] (rd)+(-157.5:0.2)arc(-157.5:-292.5:0.2);
 \draw[blue] (r)+(-112.5:0.25)arc(-112.5:-247.5:0.25); \draw[blue] (rd)+(-157.5:0.25)arc(-157.5:-292.5:0.25);
 \draw[blue] (u)+(-22.5:0.2)arc(-22.5:-157.5:0.2); \draw[blue] (lu)+(22.5:0.2)arc(22.5:-112.5:0.2); \draw[blue] (l)+(67.5:0.2)arc(67.5:-67.5:0.2); \draw[blue] (ld)+(-22.5:0.2)arc(-22.5:112.5:0.2);
 \draw[blue] (u)+(-22.5:0.25)arc(-22.5:-157.5:0.25); \draw[blue] (lu)+(22.5:0.25)arc(22.5:-112.5:0.25); \draw[blue] (l)+(67.5:0.25)arc(67.5:-67.5:0.25); \draw[blue] (ld)+(-22.5:0.25)arc(-22.5:112.5:0.25);
 \fill(u) circle (0.07); \fill(ru) circle (0.07); \fill(r) circle (0.07); \fill(rd) circle (0.07); \fill(d) circle (0.07); \fill(ld) circle (0.07); \fill(l) circle (0.07); \fill(lu) circle (0.07);
\end{tikzpicture},
   \hspace{3mm}%
\begin{tikzpicture}[baseline=0mm,scale=1.3]
 \coordinate(0)at(0,0); \node[blue] at(0) {$\delta_6$};
 \coordinate(u)at(90:1); \coordinate(d)at(-90:1); \coordinate(r)at(0:1); \coordinate(l)at(180:1);
 \coordinate(ru)at(45:1); \coordinate(rd)at(-45:1); \coordinate(lu)at(135:1); \coordinate(ld)at(-135:1);
 \coordinate(2)at(180:1.5);
 \draw(u)--node{\small\rotatebox{-22.5}{$>$}}node[above right=-1.5]{$1$}(ru)--node{\small\rotatebox{-67.5}{$>$}}node[above right=-1.5]{$2$}(r)--node{\small\rotatebox{67.5}{$>$}}node[below right=-1.5]{$1$}(rd)--node{\small\rotatebox{22.5}{$>$}}node[below right=-1.5]{$2$}(d)--node{\small\rotatebox{-22.5}{$<$}}node[below left=-1.5]{$3$}(ld)--node{\small\rotatebox{-67.5}{$<$}}node[below left=-1.5]{$4$}(l)--node{\small\rotatebox{67.5}{$<$}}node[above left=-1.5]{$3$}(lu)--node{\small\rotatebox{22.5}{$<$}}node[above left=-1.5]{$4$}(u);
 \draw[blue] (lu)+(22.5:0.2)--(d); \draw[blue] (lu)+(22.5:0.25)--(d);
 \draw[blue] (d)+(22.5:0.2)--($(u)!0.5!(ru)$); \draw[blue] (d)+(22.5:0.25)..controls(-45:0.5)..($(rd)!0.5!(r)$);
 \draw[blue] (r)+(-112.5:0.2)arc(-112.5:-247.5:0.2); \draw[blue] (rd)+(-157.5:0.2)arc(-157.5:-292.5:0.2);
 \draw[blue] (r)+(-112.5:0.25)arc(-112.5:-247.5:0.25); \draw[blue] (rd)+(-157.5:0.25)arc(-157.5:-292.5:0.25);
 \draw[blue] (u)+(-22.5:0.2)arc(-22.5:-157.5:0.2); \draw[blue] (ru)+(-67.5:0.2)arc(-67.5:-202.5:0.2); \draw[blue] (l)+(67.5:0.2)arc(67.5:-67.5:0.2); \draw[blue] (ld)+(-22.5:0.2)arc(-22.5:112.5:0.2);
 \draw[blue] (u)+(-22.5:0.25)arc(-22.5:-157.5:0.25); \draw[blue] (ru)+(-67.5:0.25)arc(-67.5:-202.5:0.25); \draw[blue] (l)+(67.5:0.25)arc(67.5:-67.5:0.25); \draw[blue] (ld)+(-22.5:0.25)arc(-22.5:112.5:0.25);
 \fill(u) circle (0.07); \fill(ru) circle (0.07); \fill(r) circle (0.07); \fill(rd) circle (0.07); \fill(d) circle (0.07); \fill(ld) circle (0.07); \fill(l) circle (0.07); \fill(lu) circle (0.07);
\end{tikzpicture}.
\]
Then $\delta_k$ contains $c_{k+1} \cdots c_{k+6}$ for $k \in \{1,2,5,6\}$, where $c_{8+h}=c_h$. Therefore, there is $k$ such that $B_{\gamma,\delta_k} \neq 0$ except for $(i,j)=(1,5)$. In which case, either $B_{\gamma,\delta_3} \neq 0$ or $B_{\gamma,\delta_7} \neq 0$ for two tagged arcs
\[
\begin{tikzpicture}[baseline=0mm,scale=1.3]
 \coordinate(0)at(0,0); \node[blue] at(0) {$\delta_3$};
 \coordinate(u)at(90:1); \coordinate(d)at(-90:1); \coordinate(r)at(0:1); \coordinate(l)at(180:1);
 \coordinate(ru)at(45:1); \coordinate(rd)at(-45:1); \coordinate(lu)at(135:1); \coordinate(ld)at(-135:1);
 \coordinate(2)at(180:1.5);
 \draw(u)--node{\small\rotatebox{-22.5}{$>$}}node[above right=-1.5]{$1$}(ru)--node{\small\rotatebox{-67.5}{$>$}}node[above right=-1.5]{$2$}(r)--node{\small\rotatebox{67.5}{$>$}}node[below right=-1.5]{$1$}(rd)--node{\small\rotatebox{22.5}{$>$}}node[below right=-1.5]{$2$}(d)--node{\small\rotatebox{-22.5}{$<$}}node[below left=-1.5]{$3$}(ld)--node{\small\rotatebox{-67.5}{$<$}}node[below left=-1.5]{$4$}(l)--node{\small\rotatebox{67.5}{$<$}}node[above left=-1.5]{$3$}(lu)--node{\small\rotatebox{22.5}{$<$}}node[above left=-1.5]{$4$}(u);
 \draw[blue] (u)+(-22.5:0.25)--(lu); \draw[blue] (d)+(22.5:0.2)--(lu);
 \draw[blue] (ru)+(-67.5:0.2)arc(-67.5:-202.5:0.2); \draw[blue] (r)+(-112.5:0.2)arc(-112.5:-247.5:0.2); \draw[blue] (rd)+(-157.5:0.2)arc(-157.5:-292.5:0.2);
 \fill(u) circle (0.07); \fill (ru) circle (0.07); \fill(r)circle(0.07); \fill(rd) circle (0.07); \fill(d)circle(0.07); \fill(ld) circle (0.07); \fill(l) circle (0.07); \fill (lu) circle (0.07);
\end{tikzpicture},
   \hspace{3mm}%
\begin{tikzpicture}[baseline=0mm,scale=1.3]
 \coordinate(0)at(0,0); \node[blue] at(0) {$\delta_7$};
 \coordinate(u)at(90:1); \coordinate(d)at(-90:1); \coordinate(r)at(0:1); \coordinate(l)at(180:1);
 \coordinate(ru)at(45:1); \coordinate(rd)at(-45:1); \coordinate(lu)at(135:1); \coordinate(ld)at(-135:1);
 \coordinate(2)at(180:1.5);
 \draw(u)--node{\small\rotatebox{-22.5}{$>$}}node[above right=-1.5]{$1$}(ru)--node{\small\rotatebox{-67.5}{$>$}}node[above right=-1.5]{$2$}(r)--node{\small\rotatebox{67.5}{$>$}}node[below right=-1.5]{$1$}(rd)--node{\small\rotatebox{22.5}{$>$}}node[below right=-1.5]{$2$}(d)--node{\small\rotatebox{-22.5}{$<$}}node[below left=-1.5]{$3$}(ld)--node{\small\rotatebox{-67.5}{$<$}}node[below left=-1.5]{$4$}(l)--node{\small\rotatebox{67.5}{$<$}}node[above left=-1.5]{$3$}(lu)--node{\small\rotatebox{22.5}{$<$}}node[above left=-1.5]{$4$}(u);
 \draw[blue] (u)+(-157.5:0.25)--(ru); \draw[blue] (d)+(157.5:0.2)--(ru);
 \draw[blue] (lu)+(22.5:0.2)arc(22.5:-112.5:0.2); \draw[blue] (l)+(67.5:0.2)arc(67.5:-67.5:0.2); \draw[blue] (ld)+(112.5:0.2)arc(112.5:-22.5:0.2);
 \fill(u) circle (0.07); \fill (ru) circle (0.07); \fill(r)circle(0.07); \fill(rd) circle (0.07); \fill(d)circle(0.07); \fill(ld) circle (0.07); \fill(l) circle (0.07); \fill (lu) circle (0.07);
\end{tikzpicture}.
\]
In the same way, we can construct a desired $\delta$ for the case that the genus of $S$ is three or more.
\end{proof}

\subsection{Tagged triangulations with common intersection vectors}

A \emph{tagged triangulation} of $\cS$ is a maximal set $T$ of tagged arcs of $\cS$ such that $\Int(\gamma,\delta)=0$ for any $\gamma, \delta \in T$. We denote by $\bT_{\cS}$ the set of tagged triangulations of $\cS$. By \cite[Remark 4.2]{FoST08}, a tagged triangulation of $\cS$ decomposes $S$ into \emph{puzzle pieces} as in Figure \ref{fig:puzzle pieces}.

\begin{figure}[ht]
\begin{tikzpicture}[baseline=3mm]
 \coordinate(l)at(-150:1); \coordinate(r)at(-30:1); \coordinate(u)at(90:1);
 \draw(u)--(l)--(r)--(u);
 \fill(u)circle(0.07); \fill(l)circle(0.07); \fill(r)circle(0.07);
\end{tikzpicture}
 \hspace{5mm}%
\begin{tikzpicture}[baseline=0mm]
 \coordinate(c)at(0,0); \coordinate(u)at(0,1); \coordinate(d)at(0,-1);
 \draw(d)to[out=180,in=180](u);
 \draw(d)to[out=0,in=0](u);
 \draw(d)to[out=60,in=120,relative](c);
 \draw(d)to[out=-60,in=-120,relative]node[pos=0.8]{\rotatebox{40}{\footnotesize $\bowtie$}}(c);
 \fill(c)circle(0.07); \fill(u)circle(0.07); \fill(d)circle(0.07);
\end{tikzpicture}
 \hspace{5mm}%
\begin{tikzpicture}[baseline=0mm]
 \coordinate(l)at(-0.5,0); \coordinate(r)at(0.5,0); \coordinate(d)at(0,-1);
 \draw(0,0)circle(1);
 \draw(d)to[out=170,in=-130](l);
 \draw(d)to[out=95,in=0]node[pos=0.8]{\rotatebox{60}{\footnotesize $\bowtie$}}(l);
 \draw(d)to[out=85,in=180](r);
 \draw(d)to[out=10,in=-50] node[pos=0.8]{\rotatebox{10}{\footnotesize $\bowtie$}}(r);
 \fill(l)circle(0.07); \fill(r)circle(0.07); \fill(d)circle(0.07);
\end{tikzpicture}
   \hspace{5mm}%
\begin{tikzpicture}[baseline=3mm]
 \coordinate(c)at(0,0); \coordinate(u)at(90:1); \coordinate(r)at(-30:1); \coordinate(l)at(210:1);
 \draw(c)to[out=60,in=120,relative](u);
 \draw(c)to[out=-60,in=-120,relative]node[pos=0.8]{\rotatebox{40}{\footnotesize $\bowtie$}}(u);
 \draw(c)to[out=60,in=120,relative](r);
 \draw(c)to[out=-60,in=-120,relative]node[pos=0.8]{\rotatebox{-80}{\footnotesize $\bowtie$}}(r);
 \draw(c)to[out=60,in=120,relative](l);
 \draw(c)to[out=-60,in=-120,relative] node[pos=0.8]{\rotatebox{160}{\footnotesize $\bowtie$}}(l);
 \fill(c)circle(0.07); \fill(u)circle(0.07); \fill(l)circle(0.07); \fill(r)circle(0.07);
\end{tikzpicture}
\caption{A complete list of puzzle pieces of $\cS$ up to changing all tags at each puncture, where the rightmost puzzle piece forms a tagged triangulation of a sphere with exactly four punctures}
\label{fig:puzzle pieces}
\end{figure}
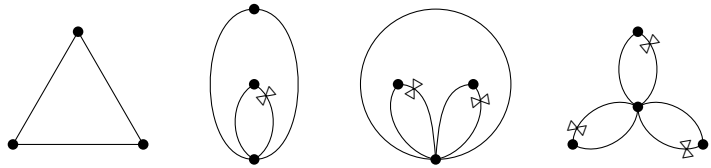

Let $T \in \bT_{\cS}$. For $\gamma \in T$, there is a unique tagged arc $\gamma' \notin T$ such that $\mu_{\gamma}T:=(T\setminus\{\gamma\})\sqcup\{\gamma'\}\in\bT_{\cS}$. We call $\mu_{\gamma}T$ the \emph{flip} of $T$ at $\gamma$. We denote by $\bT_{\cS}^T$ the set of tagged triangulations of $\cS$ obtained from $T$ by sequences of flips, and by $\bA_{\cS}^T$ the set of tagged arcs of tagged triangulations contained in $\bT_{\cS}^T$. It is elementary that flips are compatible with tagged rotations, that is, we have $\rho(\mu_{\gamma}T)=\mu_{\rho(\gamma)}\rho(T)$. It implies that $\rho: \bT_{\cS}^T \rightarrow \bT_{\cS}^{\rho (T)}$ is a bijection which commutes with flips. In almost cases, its domain and codomain coincide as follows.

\begin{theorem}[{\cite[Theorem 7.9 and Proposition 7.10]{FoST08}}]\label{thm:transitivity of flips}
 If $\cS$ is not a closed surface with exactly one puncture, then we have $\bT_{\cS}^T=\bT_{\cS}$, that is, any two tagged triangulations of $\cS$ are connected by a sequence of flips. If $\cS$ is a closed surface with exactly one puncture, then we have $\bT_{\cS}^T\sqcup\bT_{\cS}^{\rho(T)}=\bT_{\cS}$, in particular, all tags appearing in $\bT_{\cS}^T$ are the same.
\end{theorem}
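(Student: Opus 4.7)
The plan is to reduce the tagged situation to the classical connectedness of the flip graph on ideal (i.e., all-plain) triangulations. First I would exploit the rigidity forced by $\Int(\gamma,\delta)=0$: for any $T\in\bT_{\cS}$ and any puncture $p$, all ends of $T$-arcs at $p$ must carry the same tag, except that $T$ may contain a single pair of conjugate arcs at $p$ (otherwise $D'_{\gamma,\delta}>0$ for some $\gamma,\delta\in T$). Using this, I would define an associated ideal triangulation $T^\circ$ by replacing every conjugate pair at $p$ by a self-folded triangle enclosing $p$, together with a sign function $\epsilon_T$ on the set of punctures recording the uniform tag at each puncture not enclosed by a self-folded triangle, with a chosen convention for those that are.

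Next I would analyze how a flip $T \mapsto \mu_\gamma T$ acts on the pair $(T^\circ, \epsilon_T)$. A flip of an arc not belonging to a conjugate pair corresponds to an ordinary diagonal flip in $T^\circ$ and preserves $\epsilon_T$. A flip of a conjugate-pair arc at $p$ toggles $\epsilon_T(p)$ and rearranges the self-folded triangle enclosing $p$. The crucial asymmetry with the ideal setting is that the radius of a self-folded triangle admits no ideal flip, yet in the tagged setting every arc can be flipped; this is precisely what makes the tagged flip graph better behaved.

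For the generic case where $\cS$ is not a closed surface with exactly one puncture, I would combine a sign-toggling argument with the classical Harer--Mosher--Hatcher theorem that the flip graph of ideal triangulations of $\cS$ is connected. Given $T,T'\in\bT_{\cS}$, the classical theorem connects $T^\circ$ to $(T')^\circ$ by ideal flips, which lift to tagged flips. To then match $\epsilon_T$ with $\epsilon_{T'}$ puncture by puncture, I would use a local move: flip arcs near a puncture $p$ to introduce a self-folded triangle enclosing $p$ (which requires some nearby ``room'' in the form of another puncture or a piece of boundary), flip its radius to toggle $\epsilon(p)$, and undo the setup. Existence of such room is exactly the condition that fails in the once-punctured closed case.

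The exceptional case is the main remaining obstacle and also the source of the stated dichotomy. When $\cS$ is a closed surface with exactly one puncture $p$, no ideal triangulation contains a self-folded triangle, because such a triangle requires an outer marked point distinct from the enclosed puncture. Hence no tagged triangulation contains a conjugate pair, every $T\in\bT_{\cS}$ has all tags at $p$ equal, and this common tag is a flip invariant. Since $\rho(T)$ differs from $T$ precisely by reversing that tag, one obtains $\bT_{\cS}=\bT_{\cS}^T\sqcup\bT_{\cS}^{\rho(T)}$, with all tags constant within each component. The hard part throughout is verifying that the local sign-toggling move can always be realized in the generic case; this amounts to a careful topological case analysis that in any non-exceptional neighborhood of $p$ one can reach a configuration containing a self-folded triangle around $p$ via a sequence of tagged flips.
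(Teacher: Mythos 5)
The statement you were asked to prove is not proved in this paper at all: it is quoted from Fomin--Shapiro--Thurston \cite{FoST08} (Theorem 7.9 and Proposition 7.10), so the only meaningful comparison is with the proof there. Your outline reproduces essentially the strategy of that source: encode a tagged triangulation as an ideal triangulation $T^{\circ}$ (conjugate pairs corresponding to self-folded triangles) together with a sign function at the punctures, invoke the classical connectivity of the ideal flip graph (Harer--Mosher--Hatcher), and adjust signs by local moves; in the once-punctured closed case, observe that conjugate pairs cannot occur, so the common tag at the puncture is a flip invariant, which together with ideal connectivity (and the fact that there tagged arcs are exactly ideal arcs and every arc of an ideal triangulation is flippable) yields $\bT_{\cS}=\bT_{\cS}^T\sqcup\bT_{\cS}^{\rho(T)}$. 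Your treatment of the exceptional case is essentially complete, and your compatibility analysis at a puncture (uniform tags unless a single conjugate pair is present, in which case no other arc ends there) is correct.

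The problem is the generic case: the step you yourself flag as ``the hard part'' --- that for every puncture $p$ one can reach by tagged flips a configuration containing a conjugate pair at $p$ (equivalently, a self-folded triangle around $p$ in $T^{\circ}$), so that the sign at $p$ can be toggled --- is asserted, not proved, and it is exactly where the hypothesis that $\cS$ is not a closed surface with one puncture enters. As written, the argument therefore has a genuine gap at its central step. It can be closed without an ad hoc case analysis near $p$: if there is a marked point $q\neq p$, then the loop at $q$ cutting out a once-punctured monogon around $p$, together with its radius, is a compatible collection of ideal arcs and hence extends to an ideal triangulation containing a self-folded triangle at $p$; ideal flip-connectivity carries $T^{\circ}$ to that triangulation, these ideal flips lift to tagged flips, a flip inside the resulting once-punctured digon toggles the tag at $p$ (the local exchange graph is a $4$-cycle through the two conjugate pairs), and one then retraces the lifted flips. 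What still has to be verified for this to be legitimate is the flip-equivariance of the encoding $(T^{\circ},\epsilon)$ --- in particular that flips of loops of self-folded triangles and flips performed after the toggle behave as claimed and do not disturb the signs at other punctures; this bookkeeping, together with the toggling move, is the actual content of the proof in \cite{FoST08}, and your sketch currently takes it on faith.
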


For $\gamma\in\bA_{\cS}$, we set two kinds of \emph{intersection vectors}
\[
(T | \gamma):=((t | \gamma))_{t \in T} \in \bZ^{T} \text{ and } \Int(T,\gamma):=(\Int(t,\gamma))_{t \in T} \in \bZ^{T}. 
\]
Note that the former induces the denominator vector of the corresponding cluster variable, while the latter induces its $f$-vector and the dimension vector of the corresponding indecomposable module (see Theorem \ref{thm:bijection-x} and under Theorem \ref{thm:bijection-sC}).

\begin{definition}\label{def:CIV}
Let $T\in\bT_{\cS}$. We say that
\begin{enumerate}
\item $T$ has \emph{common intersection vectors} (\emph{CIV}, for short) if $(T | \gamma)=\Int(T,\gamma)$ for any $\gamma\in\bA_{\cS} \setminus T$;
\item $T$ has \emph{weak common intersection vectors} (\emph{wCIV}, for short) if $(T | \gamma)=\Int(T,\gamma)$ for any $\gamma\in\bA_{\cS}^T \setminus T$.
\end{enumerate}
\end{definition}

Theorem \ref{thm:transitivity of flips} implies that CIV and wCIV are equivalent if $\cS$ is not a closed surface with exactly one puncture. In which case, we get the following.

\begin{proposition}\label{prop:once-punctured CIV}
Suppose that $\cS$ is a closed surface with exactly one puncture. Then all $T \in \bT_{\cS}$ do not have CIV. Moreover, the following are equivalent:
\begin{enumerate}
\item There is at least one $T\in\bT_{\cS}$ with wCIV;
\item All $T\in\bT_{\cS}$ have wCIV;
\item $S$ is a torus.
\end{enumerate}
\end{proposition}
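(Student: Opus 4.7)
The plan is to dispatch the CIV claim and the equivalence $(2)\Rightarrow(1)\Rightarrow(3)\Rightarrow(2)$ separately, exploiting two facts: Theorem \ref{thm:transitivity of flips} forces a uniform tag type within $\bT_\cS^T$, and the $B$-term in Proposition \ref{prop:difference between intersection numbers} behaves completely differently in genus $1$ versus genus $\ge 2$ thanks to Lemma \ref{lem:once-punctured cases}.

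For the CIV assertion I would pick any $T\in\bT_\cS$, invoke Theorem \ref{thm:transitivity of flips} to assume without loss of generality that every arc of $T$ has plain tags, fix $t\in T$, and set $\gamma:=\rho(t)$, which is the same underlying loop with both ends notched. Then $\gamma\notin T$; the underlying curves of $t$ and $\gamma$ coincide but they do not form a pair of conjugate arcs (both tags differ), and both ends of $\gamma$ at the puncture carry the opposite tag of $t$. Reading $C''_{t,\gamma}=-1$ from Table \ref{table:nonzeroBC}, counting $D''_{t,\gamma}=-2$ directly, and observing $B_{t,\gamma}=0$ (no transversal intersections), Proposition \ref{prop:difference between intersection numbers} gives $(t|\gamma)-\Int(t,\gamma)=-3$, so CIV fails at $\gamma$.

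For the equivalence, $(2)\Rightarrow(1)$ is immediate. For $(3)\Rightarrow(2)$ I assume $S$ is a torus, take arbitrary $T\in\bT_\cS$ (WLOG all plain) and $\gamma\in\bA_\cS^T\setminus T$; Theorem \ref{thm:transitivity of flips} forces $\gamma$ to be plain as well. For each $t\in T$, Lemma \ref{lem:once-punctured cases}(1) gives $B_{t,\gamma}=0$; $C''_{t,\gamma}=0$ because two distinct all-plain arcs cannot share an underlying curve; and $D''_{t,\gamma}=0$ because no end of $\gamma$ bears an opposite tag to $t$. Proposition \ref{prop:difference between intersection numbers} then yields $(t|\gamma)=\Int(t,\gamma)$, proving wCIV.

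The principal obstacle is $(1)\Rightarrow(3)$, which I handle by contraposition. Assuming $S$ has genus at least two, fix $T\in\bT_\cS$ (WLOG all plain), $t\in T$, and use Lemma \ref{lem:once-punctured cases}(2) to pick $\delta_0\in\bA_\cS$ with $B_{t,\delta_0}\neq 0$; since $B$ ignores tags, I replace $\delta_0$ by its all-plain retagging $\delta$. The delicate point is verifying $\delta\in\bA_\cS^T$: for this I would invoke the standard fact that every tagged arc extends to a tagged triangulation, completing $\delta$ to an all-plain $T'\in\bT_\cS$, and then appeal to Theorem \ref{thm:transitivity of flips} to place $T'\in\bT_\cS^T$ since $\bT_\cS^{\rho(T)}$ contains only all-notched triangulations. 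Moreover $\delta\notin T$ because $B_{t,\delta}\neq 0$ forces a transversal intersection with $t$ which is impossible inside a triangulation. The tag-uniformity argument from the torus case still kills $C''_{t,\delta}$ and $D''_{t,\delta}$, so Proposition \ref{prop:difference between intersection numbers} gives $(t|\delta)-\Int(t,\delta)=B_{t,\delta}\neq 0$, contradicting wCIV.
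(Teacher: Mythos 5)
Your proof is correct and follows essentially the same route as the paper: the CIV failure via $\rho(t)$ and the nonvanishing of $C''$ (and $D''$), and the wCIV dichotomy via Theorem \ref{thm:transitivity of flips} (tag uniformity killing $C''$, $D''$) together with Lemma \ref{lem:once-punctured cases} controlling $B$, all fed into Proposition \ref{prop:difference between intersection numbers}. The only difference is that you spell out the step the paper leaves implicit, namely that the retagged $\delta$ lies in $\bA_{\cS}^T$ (via completion to a tagged triangulation and the fact that $\bT_{\cS}^{\rho(T)}$ carries the opposite uniform tagging), which is a welcome clarification rather than a new idea.
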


\begin{proof}
Using the notations in Proposition \ref{prop:difference between intersection numbers}, the first claim follows from $C''_{\gamma,\rho(\gamma)}\neq 0$ for any $\gamma\in\bA_{\cS}$ under the assumption. Let $T \in \bT_{\cS}$ and $\gamma\in T$. We have $C''_{\gamma,\delta}=D''_{\gamma,\delta}=0$ for any $\delta \in \bA_{\cS}^T\setminus\{\gamma\}$ since all tags of $\gamma$ and $\delta$ are the same by Theorem \ref{thm:transitivity of flips}. By Lemma \ref{lem:once-punctured cases}, there is a $\delta \in \bA_{\cS}$ such that $B_{\gamma,\delta}\neq 0$ if and only if $S$ is not a torus. In particular, we can take $\delta$ as in $\bA_{\cS}^T$ since $B_{\gamma,\delta}$ is independent of tags. Therefore, $T$ have wCIV if and only if $S$ is a torus. This gives the desired equivalences.
\end{proof}

We are ready to give the main result of this section.

\begin{theorem}\label{thm:CIV}
For $T\in\bT_{\cS}$, the following are equivalent:
\begin{enumerate}
\item $T$ has CIV;
\item $T$ has neither loops nor tagged arcs connecting punctures;
\item $\Int(\gamma,\rho(\gamma))=1$ for any $\gamma \in T$;
\item $\Int(\gamma,\rho^{-1}(\gamma))=1$ for any $\gamma \in T$.
\end{enumerate}
\end{theorem}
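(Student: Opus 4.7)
The central tool is Proposition \ref{prop:difference between intersection numbers}: for $t,\gamma\in\bA_{\cS}$ we have $(t|\gamma)-\Int(t,\gamma)=B_{t,\gamma}+C''_{t,\gamma}+D''_{t,\gamma}$, a sum of three non-positive integers by Lemma \ref{lem:observations}(1), so the two intersection numbers agree if and only if each of $B_{t,\gamma}$, $C''_{t,\gamma}$, $D''_{t,\gamma}$ vanishes. The plan is to use this to settle (1)$\Leftrightarrow$(2) directly and then establish (2)$\Leftrightarrow$(3) (and symmetrically (2)$\Leftrightarrow$(4)) by a case analysis of $\Int(\gamma,\rho(\gamma))$.

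For (2)$\Rightarrow$(1), assume (2) and fix any $t\in T$ and $\gamma\in\bA_{\cS}\setminus T$. Since $t$ is not a loop, Lemma \ref{lem:observations}(2) gives $B_{t,\gamma}=D''_{t,\gamma}=0$; since $t$ does not connect punctures, Lemma \ref{lem:observations}(3) gives $C''_{t,\gamma}=0$. Hence $(t|\gamma)=\Int(t,\gamma)$, so $T$ has CIV. For the contrapositive of (1)$\Rightarrow$(2), if $T$ contains a loop $\ell$ at a marked point on $\partial S$, Lemma \ref{lem:loop B} supplies a tagged arc $\delta$ with $B_{\ell,\delta}\neq 0$; a nonzero $B$ forces transverse intersections between $\ell$ and $\delta$, so $\delta\notin T$, and CIV fails. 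If instead $T$ contains a loop at a puncture or an arc $\gamma$ between two distinct punctures, I build $\gamma'$ by flipping every tag of $\gamma$ at its puncture endpoints; then $\gamma'$ shares the underlying curve with $\gamma$ but differs in tags at all shared puncture ends, so it is not its conjugate partner and $C''_{\gamma,\gamma'}=-1$; moreover $D'_{\gamma,\gamma'}>0$ gives $\Int(\gamma,\gamma')>0$, forcing $\gamma'\notin T$, and CIV again fails.

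For (2)$\Leftrightarrow$(3) I carry out a case analysis on the types of endpoints of $\gamma\in T$, organised as Table \ref{table:Int(g,rho g)} (referenced in Definition \ref{def:rotation}). When $\gamma$ is of a type consistent with (2) — either both endpoints on $\partial S$, or one on $\partial S$ and one at a puncture — a local picture near the rotated endpoint(s) shows $\Int(\gamma,\rho(\gamma))=1$, directly from the definitions of $A$, $C'$, and $D'$. In each type forbidden by (2) one obtains $\Int(\gamma,\rho(\gamma))\neq 1$: an arc between two distinct punctures contributes $D'=2$; a loop at a puncture contributes $D'=4$; and a loop at a boundary marked point produces at least two transverse intersections, via a Dehn twist construction analogous to the one in the proof of Lemma \ref{lem:loop B} (cf.\ Figure \ref{fig:construction Bneq0}). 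The equivalence (2)$\Leftrightarrow$(4) follows by the same template with $\rho^{-1}$ in place of $\rho$. The main obstacle is the boundary-loop subcase, whose geometry — whether the loop bounds a once-punctured monogon, the number of marked points on its boundary component, and degenerations of $\rho$ at a boundary with a single marked point — must be treated carefully to ensure $\Int(\gamma,\rho(\gamma))\geq 2$; by contrast, everything else reduces straightforwardly to Definition \ref{def:intersection numbers}, Proposition \ref{prop:difference between intersection numbers}, and the observations in Lemmas \ref{lem:observations} and \ref{lem:loop B}.
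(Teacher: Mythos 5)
Your proposal is correct and takes essentially the same route as the paper: (1)$\Leftrightarrow$(2) via Proposition \ref{prop:difference between intersection numbers} combined with Lemmas \ref{lem:observations} and \ref{lem:loop B} (your added checks that the auxiliary arcs lie outside $T$ are the right ones), and (2)$\Leftrightarrow$(3)$\Leftrightarrow$(4) via the same five-type case analysis that the paper records in Table \ref{table:Int(g,rho g)}. The only soft spot is the boundary-loop subcase you defer: it is precisely the fourth entry of Table \ref{table:Int(g,rho g)}, where both ends of $\rho(\gamma)$ emanate from the far side of the loop $\gamma$ and give $A_{\gamma,\rho(\gamma)}=2$ (in particular $\Int(\gamma,\rho(\gamma))\neq 1$), whereas the Dehn-twist construction of Lemma \ref{lem:loop B} only produces an auxiliary arc $\delta$ with $B_{\gamma,\delta}\neq 0$ and says nothing about $\rho(\gamma)$, so that reference should be replaced by the direct picture (or a parity argument for the crossings of $\gamma$ with its rotation).
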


\begin{proof}
Using the notations in Proposition \ref{prop:difference between intersection numbers}, Lemma \ref{lem:observations}(1) implies that $T$ have CIV if and only if $B_{\gamma,\delta}=C''_{\gamma,\delta}=D''_{\gamma,\delta}=0$ for any $\gamma \in T$ and $\delta\in\bA_{\cS}\setminus\{T\}$. By Lemma \ref{lem:observations}(2) and (3), their equalities hold for $T$ satisfying (2), that is, (2) implies (1). The inverse follows from Lemmas \ref{lem:observations}(3) and \ref{lem:loop B}. It is straightforward to check the remaining equivalences. In fact, Table \ref{table:Int(g,rho g)} lists all kinds of tagged arcs up to changing all tags at each puncture. It gives that $\gamma\in\bA_{\cS}$ is neither a loop nor a tagged arc connecting punctures if and only if $\Int(\gamma,\rho(\gamma))=1$. Similarly, it is equivalent to $\Int(\gamma,\rho^{-1}(\gamma))=1$. Thus (2), (3) and (4) are equivalent.
\end{proof}

\renewcommand{\arraystretch}{2}
{\begin{table}[ht]
\begin{tabular}{c|c|c|c|c|c}
&
\begin{tikzpicture}[baseline=0mm]
 \coordinate(u)at(0,1); \coordinate(d)at(0,-1); \coordinate(ru)at(0.5,1); \coordinate(ld)at(-0.5,-1);
 \fill[pattern=north east lines](-1,1)--(-1,1.3)--(1,1.3)--(1,1); \fill[pattern=north east lines](-1,-1)--(-1,-1.3)--(1,-1.3)--(1,-1);
 \draw(-1,1)--(1,1) (-1,-1)--(1,-1);
 \draw(u)--node[right,pos=0.7]{$\gamma$}(d); \draw[blue](ru)--node[left,pos=0.7]{$\rho(\gamma)$}(ld);
 \fill(u)circle(0.07); \fill(d)circle(0.07); \fill(ru)circle(0.07); \fill(ld)circle(0.07);
 \node at(0,-1.5){};
\end{tikzpicture}
&
\begin{tikzpicture}[baseline=0mm]
 \coordinate(u)at(0,1); \coordinate(d)at(0,-1); \coordinate(ld)at(-0.5,-1);
 \fill[pattern=north east lines](-1,-1)--(-1,-1.3)--(1,-1.3)--(1,-1);
 \draw(-1,-1)--(1,-1);
 \draw(u)--node[right]{$\gamma$}(d);
 \draw[blue](u)to[out=-120,in=90]node[left]{$\rho(\gamma)$}node[pos=0.15]{\rotatebox{-30}{\footnotesize $\bowtie$}}(ld);
 \fill(u)circle(0.07); \fill(d)circle(0.07); \fill(ld)circle(0.07);
\end{tikzpicture}
&
\begin{tikzpicture}[baseline=0mm]
 \coordinate(u)at(0,1); \coordinate(d)at(0,-1);
 \draw(u)--node[right]{$\gamma$}(d);
 \draw[blue](u)to[out=-130,in=130]node[left,pos=0.5]{$\rho(\gamma)$}node[pos=0.1]{\rotatebox{-40}{\footnotesize $\bowtie$}}node[pos=0.9]{\rotatebox{40}{\footnotesize $\bowtie$}}(d);
 \fill(u)circle(0.07); \fill(d)circle(0.07);
\end{tikzpicture}
&
\begin{tikzpicture}[baseline=0mm]
 \coordinate(u)at(0,1); \coordinate(d)at(0,-1); \coordinate(ld)at(-0.5,-1);
 \fill[pattern=north east lines](-1,-1)--(-1,-1.3)--(1,-1.3)--(1,-1); \draw(-1,-1)--(1,-1);
 \draw(d)..controls(-0.7,-0.5)and(-0.7,0.7)..(0,0.7); \draw(d)..controls(0.7,-0.5)and(0.7,0.7)..node[right,pos=0.3]{$\gamma$}(0,0.7);
 \draw[blue](ld)..controls(-1,0)and(-0.7,1)..node[left,pos=0.7]{$\rho(\gamma)$}(u); \draw[blue](ld)..controls(1,0)and(0.7,1)..(u);
 \fill(d)circle(0.07); \fill(ld)circle(0.07);
 \draw[dotted](0,0.2)circle(0.3);
\end{tikzpicture}
&
\begin{tikzpicture}[baseline=1mm]
 \coordinate(u)at(0,1); \coordinate(d)at(0,-1);
 \draw(d)..controls(-0.7,-0.3)and(-0.7,0.7)..(0,0.7); \draw(d)..controls(0.7,-0.3)and(0.7,0.7)..node[left,pos=0.2]{$\gamma$}(0,0.7);
 \draw[blue](d)..controls(-1,-1)and(-1,1)..node[pos=0.1]{\rotatebox{70}{\footnotesize $\bowtie$}}(u)node[above]{$\rho(\gamma)$};
 \draw[blue](d)..controls(1,-1)and(1,1)..node[pos=0.1]{\rotatebox{110}{\footnotesize $\bowtie$}}(u);
 \fill(d)circle(0.07);
 \draw[dotted](0,0.2)circle(0.3);
\end{tikzpicture}
\\\hline
$\Int(\gamma,\rho(\gamma))$ & 1 & 1 & 2 & 2 & 4
\end{tabular}\vspace{3mm}
\caption{QZ-intersection number of each tagged arc $\gamma$ and its tagged rotation $\rho(\gamma)$}
\label{table:Int(g,rho g)}
\end{table}}

\begin{theorem}\label{thm:wCIV}
For $T \in \bT_{\cS}$, it has wCIV if and only if it has CIV or $\cS$ is a torus with exactly one puncture.
\end{theorem}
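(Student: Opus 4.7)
The plan is to split into two cases according to whether $\cS$ is a closed surface with exactly one puncture, and in each case reduce to results already proved.

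For the easy direction, if $T$ has CIV then clearly $T$ has wCIV since $\bA_{\cS}^T\setminus T\subseteq \bA_{\cS}\setminus T$. If $\cS$ is a torus with exactly one puncture, then by Proposition \ref{prop:once-punctured CIV} every $T\in\bT_{\cS}$ has wCIV. So both alternatives on the right-hand side imply wCIV.

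For the converse, suppose $T\in\bT_{\cS}$ has wCIV. I would distinguish:

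\textbf{Case 1:} $\cS$ is \emph{not} a closed surface with exactly one puncture. Then Theorem \ref{thm:transitivity of flips} gives $\bT_{\cS}^T=\bT_{\cS}$, whence $\bA_{\cS}^T=\bA_{\cS}$. Consequently wCIV and CIV literally coincide for $T$, so $T$ has CIV.

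\textbf{Case 2:} $\cS$ is a closed surface with exactly one puncture. Then Proposition \ref{prop:once-punctured CIV} applies: the existence of one $T\in\bT_{\cS}$ with wCIV forces $S$ to be a torus, and hence $\cS$ is a torus with exactly one puncture.

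In each case the right-hand disjunction holds, proving the equivalence. The proof is essentially an assembly of Theorem \ref{thm:transitivity of flips} and Proposition \ref{prop:once-punctured CIV}, so there is no real obstacle beyond making the case split cleanly and noting that in Case 1 the set $\bA_{\cS}\setminus T$ over which CIV is tested equals $\bA_{\cS}^T\setminus T$ (because \emph{every} tagged arc belongs to some tagged triangulation reachable from $T$ by flips once the flip graph is connected).
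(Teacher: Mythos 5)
Your proposal is correct and follows essentially the same route as the paper: the paper's proof likewise invokes the equivalence of CIV and wCIV away from closed once-punctured surfaces (a consequence of Theorem \ref{thm:transitivity of flips}) and Proposition \ref{prop:once-punctured CIV} for the remaining case. Your extra remark that every tagged arc lies in a triangulation reachable from $T$ once the flip graph is connected is exactly the observation the paper uses implicitly.
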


\begin{proof}
Since CIV and wCIV are equivalent except that $\cS$ is a closed surface with exactly one puncture, the assertion follows from Proposition \ref{prop:once-punctured CIV}.
\end{proof}

In general, the conditions (1) and (2) in Proposition \ref{prop:once-punctured CIV} are not equivalent. We also characterize $\cS$ satisfying each condition.

\begin{theorem}\label{thm:CIV cS}
Let $\cS$ be a marked surface.
\begin{enumerate}
\item There is at least one $T\in\bT_{\cS}$ with CIV if and only if $\cS$ is one of the following:
\begin{itemize}
 \item a marked surface of genus zero with $\partial S\neq\emptyset$;
 \item a marked surface of positive genus with $\partial S\neq\emptyset$, $|M|\ge3$ and $|M\cap\partial S|\ge2$.
\end{itemize}
\item All $T\in\bT_{\cS}$ have CIV if and only if $\cS$ is one of the following:
\begin{itemize}
 \item a polygon with at most one puncture;
 \item an annulus with $|M|=2$.
\end{itemize}
\end{enumerate}
In (1) and (2), we also have the similar statements obtained by replacing CIV with wCIV and adding the case that $\cS$ is a torus with exactly one puncture.
\end{theorem}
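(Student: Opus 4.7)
The plan is to leverage Theorem \ref{thm:CIV}, which characterizes CIV purely geometrically: $T$ has CIV if and only if $T$ contains neither a loop nor a tagged arc connecting punctures. Part (1) thus becomes the question of for which $\cS$ some $T\in\bT_{\cS}$ avoids loops and puncture-connecting arcs, and part (2) the question of for which $\cS$ every $T\in\bT_{\cS}$ does. The wCIV versions follow immediately from Theorem \ref{thm:wCIV} and Proposition \ref{prop:once-punctured CIV}, which together show that among closed surfaces with exactly one puncture the existence (equivalently, universality) of a wCIV tagged triangulation is equivalent to $S$ being a torus, so both lists grow by exactly the once-punctured torus.

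For the ``only if'' direction of (1), I would obstruct each excluded case geometrically. If $\partial S = \emptyset$, every marked point is a puncture and every tagged arc connects punctures. If the genus is positive, $\partial S \ne \emptyset$, and $|M\cap\partial S|\le 1$ or $|M|\le 2$, I would analyse the puzzle-piece decomposition of Figure \ref{fig:puzzle pieces} under the CIV constraint: every triangle piece requires three pairwise distinct vertices with at least two on $\partial S$, and every digon- or monogon-with-puncture piece requires its spike vertex to be on $\partial S$. Combining these constraints with the Euler-characteristic count of puzzle pieces $F = 4g+2b+2p+c-4$ and the fact that there are only $|M\cap\partial S|$ boundary segments available (each usable in at most one piece) forces a loop or puncture-connecting arc. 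For instance, when $|M| = |M\cap\partial S| = 1$ every arc is automatically a loop at the unique marked point, and when $|M| = |M\cap\partial S| = 2$ in positive genus a three-cycle between two vertices with only $m_1$-$m_2$ edges is impossible by a parity obstruction, so some $m_i$-$m_i$ edge (hence a loop, since no boundary segment of that type is available in genus $\ge 1$ with $b = 1$) is forced.

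For the ``if'' direction of (1), I would construct CIV triangulations explicitly. For genus zero with $\partial S\ne\emptyset$, a fan from a fixed boundary marked point to every other marked point, using a conjugate pair at each puncture, produces only triangles and digon/monogon-with-puncture puzzle pieces whose spikes sit at the boundary, so no loop or puncture-connecting arc appears. For positive genus with $|M|\ge 3$ and $|M\cap\partial S|\ge 2$, I would induct on genus: cut along a non-separating arc joining two distinct boundary marked points (not a loop because the two endpoints differ, and available thanks to $|M\cap\partial S|\ge 2$), which decreases the genus by one while preserving both $|M|\ge 3$ and $|M\cap\partial S|\ge 2$ in the cut surface, eventually reducing to the genus-zero base case.

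For part (2), the ``if'' direction uses the tagged-arc axioms directly. In a polygon with at most one puncture, any loop cuts out a monogon with at most one puncture in the ambient disk, which is forbidden, so no loops exist at all; and with at most one puncture there are no puncture-connecting arcs either. In an annulus with exactly two marked points, a trivial loop is similarly forbidden; a non-trivial loop $\ell$ cuts the annulus into two closed annular regions, and neither region is one of the puzzle pieces of Figure \ref{fig:puzzle pieces}, so $\ell$ cannot belong to any tagged triangulation. For the ``only if'' direction of (2), I would exhibit in every remaining $\cS$ a tagged triangulation containing either a loop (using the constructions in the proof of Lemma \ref{lem:loop B}, or a Dehn-twist-type flip producing a loop around an extra puncture, marked point, or handle) or a puncture-connecting arc (whenever $p\ge 2$). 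The main technical obstacle is the puzzle-piece combinatorics in the ``only if'' of (1): checking case by case that the CIV constraints are incompatible with the Euler-characteristic counts whenever the marked-point hypothesis fails in positive genus; the constructive side and the rigid cases in (2) are comparatively direct.
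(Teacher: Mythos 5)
Your overall strategy is the same as the paper's (reduce everything to the geometric criterion of Theorem \ref{thm:CIV}, and get the wCIV statements from Proposition \ref{prop:once-punctured CIV}/Theorem \ref{thm:wCIV}), but the two hard parts of (1) are not actually proved. First, the obstruction direction in positive genus is left as a sketch, and the sketch has errors. Your constraint that every triangle piece has ``three pairwise distinct vertices'' is false precisely in the relevant situations: when a boundary component carries a single marked point $m$, a triangle may have the boundary segment from $m$ to itself as a side (this is exactly the triangle $brl_1$ in the paper's Case 2). Your parity argument for $|M|=2$ only rules out the subcase of one boundary component with two marked points; when there are two boundary components with one marked point each, boundary segments of type $m_i$--$m_i$ do exist, so parity alone forces nothing, and the paper needs an extra reduction (the two triangles adjacent to an $m_1$--$m_2$ arc each contain a boundary segment, reducing to the previous subcase). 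Worse, the case $|M\cap\partial S|=1$ with punctures present is not treated at all beyond the trivial subcase $|M|=1$; the paper's proof here is a structural chain argument (the triangle at the unique boundary segment must have a puncture as third vertex, the next pieces must be digons $l_il_{i+1}$, and closing the chain forces $S$ to be a sphere), and it is not clear that an Euler-characteristic count by itself can replace it.

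Second, your induction on genus for the construction direction has a genuine pull-back problem. If you cut $\cS$ along an arc $\alpha$ joining two boundary marked points, the endpoints of $\alpha$ are doubled in the cut surface, so a loop-free (CIV) tagged triangulation of the cut surface may contain an arc joining the two copies of the same original marked point, and this arc becomes a loop in $\cS$; the inductive hypothesis therefore does not give a CIV triangulation of $\cS$. (Also, if the two chosen marked points lie on different boundary components, cutting along $\alpha$ does not lower the genus, so the reduction does not even start in subcases like (4b)/(4c).) The paper avoids both issues by cutting along the three sides of an explicit triangle $\triangle$ and then writing down explicit triangles of the cut surface whose arcs are visibly not loops in $\cS$, before invoking Proposition \ref{prop:adding} to add further marked points, punctures and boundary components. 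Two smaller points: your genus-zero ``fan'' construction needs an argument that it can be completed to a full tagged triangulation without creating loops (the paper instead builds up from the surfaces in (2) via Proposition \ref{prop:adding}); and in the annulus with two marked points your reason for excluding the nontrivial loop is wrong in mechanism --- any valid tagged arc lies in some tagged triangulation, so the correct point is that this loop, together with the boundary circle through its basepoint, cuts out an unpunctured digon and hence is not a tagged arc at all.
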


\begin{proof}[Proof of Theorem \ref{thm:CIV cS}(2)]
By Theorem \ref{thm:CIV}, all $T\in\bT_{\cS}$ have CIV if and only if there are neither loops nor tagged arcs connecting punctures in $\cS$. It is easy to check that $\cS$ appearing in (2) satisfies such conditions. In other cases, $\cS$ satisfies at least one of the following:
\begin{itemize}
 \item there are at least two punctures;
 \item the genus of $S$ is positive;
 \item there are at least two boundary components and $|M| \ge 3$.
\end{itemize}
In each case, we can take a loop or a tagged arc connecting punctures as in Figure \ref{fig:cSCIV2 proof}. Thus (2) holds.
\end{proof}

\begin{figure}[htp]
\begin{tikzpicture}[baseline=0mm,scale=1]
 \coordinate(l)at(-0.5,0); \coordinate(r)at(0.5,0);
 \draw[blue] (l)--(r);
 \fill(l) circle (0.07); \fill(r) circle (0.07);
\end{tikzpicture}
   \hspace{3mm}
\begin{tikzpicture}[baseline=0mm]
 \coordinate(p)at(0,-0.4);
 \draw(-1.3,0)to[out=90,in=90](1.3,0); \draw(-1.3,0)to[out=-90,in=-90](1.3,0);
 \draw(-0.5,0.1)to[out=-30,in=-150](0.5,0.1); \draw(-0.3,0)to[out=30,in=150](0.3,0);
 \draw[blue](p)..controls(1.3,-0.3)and(1.3,0.5)..(0,0.5);
 \draw[blue](p)..controls(-1.3,-0.3)and(-1.3,0.5)..(0,0.5);
 \fill(p)circle(0.07);
\end{tikzpicture}
   \hspace{3mm}
\begin{tikzpicture}[baseline=0mm]
 \coordinate(p)at(0,-0.3); \draw[pattern=north east lines](0,-0.5)circle (2mm);
 \draw(-1.3,0)to[out=90,in=90](1.3,0); \draw(-1.3,0)to[out=-90,in=-90](1.3,0);
 \draw(-0.5,0.1)to[out=-30,in=-150](0.5,0.1); \draw(-0.3,0)to[out=30,in=150](0.3,0);
 \draw[blue](p)..controls(1,-0.2)and(1,0.5)..(0,0.5);
 \draw[blue](p)..controls(-1,-0.2)and(-1,0.5)..(0,0.5);
 \fill(p)circle(0.07);
\end{tikzpicture}
   \hspace{3mm}
\begin{tikzpicture}[baseline=0mm]
 \coordinate(0)at(0,0); \coordinate(u)at(0,1);
 \coordinate(cu)at(0,0.2); \coordinate(p)at(0,-0.6);
 \draw[pattern=north east lines] (0) circle (2mm); \draw(0) circle (10mm);
 \draw[blue] (p) .. controls (-0.7,0) and (-0.4,0.5) .. (0,0.5);
 \draw[blue] (p) .. controls (0.7,0) and (0.4,0.5) .. (0,0.5);
 \fill(u)circle(0.07); \fill (cu) circle (0.07); \fill(p)circle(0.07);
\end{tikzpicture}
   \hspace{3mm}
\begin{tikzpicture}[baseline=0mm]
 \coordinate(0)at(0,0); \coordinate(u)at(0,1);
 \coordinate(cu)at(0,0.2); \coordinate(p)at(0,-1);
 \draw[pattern=north east lines] (0) circle (2mm); \draw(0) circle (10mm);
 \draw[blue] (p) .. controls (-0.8,0) and (-0.4,0.5) .. (0,0.5);
 \draw[blue] (p) .. controls (0.8,0) and (0.4,0.5) .. (0,0.5);
 \fill(u)circle(0.07); \fill (cu) circle (0.07); \fill(p)circle(0.07);
\end{tikzpicture}
 \caption{A loop or a tagged arc connecting punctures on each marked surface which does not appear in Theorem \ref{thm:CIV cS}(2)}
\label{fig:cSCIV2 proof}
\end{figure}
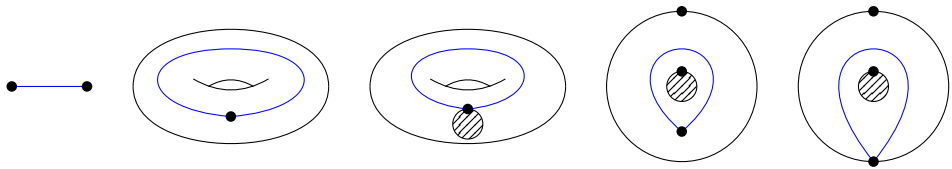

To prove Theorem \ref{thm:CIV cS}(1), we prepare the following property.

\begin{proposition}\label{prop:adding}
Suppose that $\partial S\neq\emptyset$ and there is a $T\in\bT_{\cS}$ with CIV. Then there is a $T'\in\bT_{\cS'}$ with CIV if $\cS'$ is a marked surface obtained from $\cS$ by one of the following:
\begin{enumerate}
\item adding a marked point on $\partial S$;
\item adding a puncture;
\item adding a boundary component with exactly one marked point.
\end{enumerate}
\end{proposition}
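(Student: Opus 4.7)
The plan is to invoke Theorem~\ref{thm:CIV}: since $T$ has CIV, $T$ has neither loops nor tagged arcs connecting punctures, so to produce a CIV triangulation $T'$ of $\cS'$ it suffices to construct a tagged triangulation of $\cS'$ with those same two combinatorial properties. The key preliminary observation is that any ideal triangle of $T$ has at most one puncture among its vertices; otherwise a side joining two puncture vertices would be a puncture-connecting arc of $T$. Consequently, every ideal triangle of $T$ has at least two vertices on $\partial S$.

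For case~(1), I would let $\Delta$ be the unique ideal triangle of $T$ having the boundary segment that carries $m$ as one side, write $c$ for its third vertex (distinct from the segment's endpoints because $T$ has no loops), and set $T' := T \cup \{mc\}$. For case~(2), I would place the new puncture $p$ inside an ideal triangle $\Delta$ of $T$ and label its vertices $a, b, v$ with $a, b \in \partial S$; if also $v \in \partial S$, set $T' := T \cup \{pa, pb, pv\}$, and if $v$ is a puncture, set $T' := T \cup \{pa, pb, \alpha\}$, where $\alpha$ is the interior arc from $a$ to $b$ inside $\Delta$ separating $p$ from $v$. For case~(3), I would place the new boundary component (carrying the marked point $m$) inside an ideal triangle $\Delta$ of $T$ with vertices $a, b, c$, add three arcs $ma, mb, mc$ meeting only at $m$, and then add a final arc $\gamma$ from $m$ to one of $a, b, c$ encircling the new boundary in the opposite direction (this $\gamma$ is a diagonal of the quadrilateral region that $ma, mb, mc$ leave uncut). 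In every case each new arc has distinct endpoints and is not incident to two punctures, so $T'$ contains no loops and no puncture-connecting arcs.

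The main obstacle will be verifying that each proposed $T'$ is indeed a tagged triangulation of $\cS'$. This requires two checks: first, that all pairs of added arcs are mutually $\Int$-compatible with each other and with arcs in $T$ (a short case analysis, using that the added arcs introduce no tag disagreements at the punctures); and second, that the number of added arcs ($1$, $3$, or $4$ respectively) matches the increase predicted by the Euler-characteristic formula for the number of arcs in a tagged triangulation. Case~(3) is the most delicate: one must pin down which of the three sub-regions produced by $ma, mb, mc$ is a quadrilateral bounded in part by the new inner boundary, and verify that the diagonal $\gamma$ represents a genuinely new isotopy class distinct from $ma, mb, mc$.
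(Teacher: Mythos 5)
Your overall plan coincides with the paper's final step: build $T'$ so that it has no loops and no tagged arcs connecting punctures, then conclude CIV from Theorem~\ref{thm:CIV}. However, there is a genuine gap in the construction itself: all three of your cases presuppose that $T$ contains an ideal \emph{triangle} (and, in case~(1), one whose side is the boundary segment carrying the new marked point). A tagged triangulation with CIV need not contain any triangle piece. For example, let $\cS$ be a once-punctured digon and $T$ the pair of conjugate arcs from one boundary marked point to the puncture: $T$ has neither loops nor arcs joining punctures, hence has CIV, but its unique puzzle piece is the once-punctured digon of Figure~\ref{fig:puzzle pieces}, so no boundary segment lies on a triangle and there is no triangle anywhere in which to place a new puncture or boundary component. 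The same happens for the twice-punctured monogon triangulated by two pairs of conjugate arcs. Thus your constructions simply do not apply to these $T$, and cases (1), (2) and (3) are all left unproved there; the claim ``every ideal triangle of $T$ has at least two vertices on $\partial S$'' is correct but does not help, because the relevant pieces are not triangles.

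The paper's proof is organized precisely to avoid this: it picks a puzzle piece $\triangle$ of $T$ having a boundary segment as a side and treats separately the three possible types of such a piece — a triangle, a once-punctured digon containing a conjugate pair, and a twice-punctured monogon containing two conjugate pairs — giving in each case an explicit set of added arcs for each of the modifications (1)--(3), and then invoking Theorem~\ref{thm:CIV} exactly as you do. Your argument can be repaired by supplying the analogous explicit completions for the two non-triangular boundary-adjacent puzzle pieces (taking care that the added arcs avoid loops and puncture-connecting arcs, e.g.\ by routing new arcs to boundary marked points rather than between the punctures of those pieces); without that, the proof covers only triangulations all of whose boundary-adjacent pieces are triangles.
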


\begin{proof}
Up to homotopy, we can take a puzzle piece $\triangle$ of $T$ whose side is a boundary segment of $\cS$ such that the added marked point or boundary component is in $\triangle$. We obtain $T'\in\bT_{\cS'}$ from $T$ by adding puzzle pieces in each case as follows:
\begin{eqnarray*}
\begin{tikzpicture}[baseline=5mm]
 \coordinate(u)at(0,1.3);
 \coordinate(l)at(-0.7,0); \coordinate(r)at(0.7,0);
 \fill[pattern=north east lines] (-1,0)--(-1,-0.3)--(1,-0.3)--(1,0);
 \draw(l)--(u)--(r); \draw(-1,0)--(1,0);
 \fill(u)circle(0.07); \fill(l)circle(0.07); \fill(r)circle(0.07);
 \node at(0,0.45) {$\triangle$};
\end{tikzpicture}
 \hspace{5mm}&\Longrightarrow&\hspace{3mm}
 (1)\ 
\begin{tikzpicture}[baseline=5mm]
 \coordinate(u)at(0,1.3);
 \coordinate(l)at(-0.7,0); \coordinate(r)at(0.7,0); \coordinate(0)at(0,0);
 \fill[pattern=north east lines] (-1,0)--(-1,-0.3)--(1,-0.3)--(1,0);
 \draw(l)--(u)--(r) (u)--(0); \draw(-1,0)--(1,0);
 \fill(u)circle(0.07); \fill(l)circle(0.07); \fill(r)circle(0.07); \fill(0)circle(0.07);
\end{tikzpicture}
 \hspace{9mm}
 (2)\ 
\begin{tikzpicture}[baseline=5mm]
 \coordinate(u)at(0,1.3);
 \coordinate(l)at(-0.7,0); \coordinate(r)at(0.7,0); \coordinate(p)at(0,0.5);
 \fill[pattern=north east lines] (-1,0)--(-1,-0.3)--(1,-0.3)--(1,0);
 \draw(l)--(u)--(r); \draw(-1,0)--(1,0);
 \draw(l)to[out=20,in=120,relative] node[pos=0.8]{\rotatebox{95}{\footnotesize $\bowtie$}} (p);
 \draw(l)to[out=-20,in=-120,relative] (p);
 \draw(l)..controls(0.2,0) and (0.5,0.4)..(u);
 \fill(u)circle(0.07); \fill(l)circle(0.07); \fill(r)circle(0.07); \fill(p)circle(0.07);
\end{tikzpicture}
 \hspace{9mm}
 (3)\ 
\begin{tikzpicture}[baseline=5mm]
 \coordinate(u)at(0,1.3);
 \coordinate(l)at(-0.7,0); \coordinate(r)at(0.7,0); \coordinate(p)at(0,0.3);
 \draw[pattern=north east lines] (0,0.45) circle (1.5mm);
 \fill[pattern=north east lines] (-1,0)--(-1,-0.3)--(1,-0.3)--(1,0);
 \draw(l)--(u)--(r) (l)--(p)--(r); \draw(-1,0)--(1,0);
 \draw(u) to [out=-30,in=-70,relative] (p); \draw(u) to [out=30,in=70,relative] (p);
 \fill(u)circle(0.07); \fill(l)circle(0.07); \fill(r)circle(0.07); \fill(p)circle(0.07);
\end{tikzpicture}
\\
\begin{tikzpicture}[baseline=5mm]
 \coordinate(u)at(0,1.2); \coordinate(p)at(0,0.5);
 \coordinate(l)at(-1,0); \coordinate(r)at(1,0);
 \fill[pattern=north east lines] (-1.2,0)--(-1.2,-0.3)--(1.2,-0.3)--(1.2,0);
 \draw(l) ..controls(-1,0.8)and(-0.5,1.2)..(u); \draw(r) ..controls(1,0.8)and(0.5,1.2)..(u);
 \draw(l)to[out=20,in=120,relative] node[pos=0.8]{\rotatebox{90}{\footnotesize $\bowtie$}} (p);
 \draw(l)to[out=-20,in=-120,relative] (p);
 \draw(-1.2,0)--(1.2,0);
 \fill(p)circle(0.07); \fill(l)circle(0.07); \fill(r)circle(0.07);
 \node at(0.5,0.4) {$\triangle$};
\end{tikzpicture}
 \hspace{3mm}&\Longrightarrow&\hspace{3mm}
 (1)
\begin{tikzpicture}[baseline=5mm]
 \coordinate(u)at(0,1.2); \coordinate(p)at(0,0.5); \coordinate(d)at(0,0);
 \coordinate(l)at(-1,0); \coordinate(r)at(1,0);
 \fill[pattern=north east lines] (-1.2,0)--(-1.2,-0.3)--(1.2,-0.3)--(1.2,0);
 \draw(l) ..controls(-1,0.8)and(-0.5,1.2)..(u); \draw(r) ..controls(1,0.8)and(0.5,1.2)..(u);
 \draw(l)to[out=20,in=120,relative] node[pos=0.8]{\rotatebox{90}{\footnotesize $\bowtie$}} (p);
 \draw(l)to[out=-20,in=-120,relative] (p);
 \draw(-1.2,0)--(1.2,0);
 \draw(l) ..controls(-0.8,1)and(0.2,1)..(0.3,0.7); \draw(d) ..controls(0.3,0.1)and(0.4,0.5)..(0.3,0.7);
 \fill(p)circle(0.07); \fill(l)circle(0.07); \fill(r)circle(0.07); \fill(d)circle(0.07);
\end{tikzpicture}
 \hspace{6mm}
 (2)
\begin{tikzpicture}[baseline=5mm]
 \coordinate(u)at(0,1.2); \coordinate(p)at(0,0.5); \coordinate(d)at(0.5,0.5);
 \coordinate(l)at(-1,0); \coordinate(r)at(1,0);
 \fill[pattern=north east lines] (-1.2,0)--(-1.2,-0.3)--(1.2,-0.3)--(1.2,0);
 \draw(l) ..controls(-1,0.8)and(-0.5,1.2)..(u); \draw(r) ..controls(1,0.8)and(0.5,1.2)..(u);
 \draw(l)to[out=20,in=120,relative] node[pos=0.8]{\rotatebox{90}{\footnotesize $\bowtie$}} (p);
 \draw(l)to[out=-20,in=-120,relative] (p);
 \draw(-1.2,0)--(1.2,0); \draw(r)--(d);
 \draw(l) ..controls(-0.8,1)and(0.2,1)..(d); \draw(l) ..controls(-0.2,0)and(0.4,0.2)..(d);
 \fill(p)circle(0.07); \fill(l)circle(0.07); \fill(r)circle(0.07); \fill(d)circle(0.07);
\end{tikzpicture}
 \hspace{6mm}
 (3)
\begin{tikzpicture}[baseline=5mm]
 \coordinate(u)at(0,1.2); \coordinate(p)at(-0.2,0.5); \coordinate(d)at(0.3,0.5);
 \coordinate(l)at(-1,0); \coordinate(r)at(1,0);
 \fill[pattern=north east lines] (-1.2,0)--(-1.2,-0.3)--(1.2,-0.3)--(1.2,0);
 \draw[pattern=north east lines] (0.5,0.5) circle (0.2);
 \draw(l) ..controls(-1,0.8)and(-0.5,1.2)..(u); \draw(r) ..controls(1,0.8)and(0.5,1.2)..(u);
 \draw(l)to[out=20,in=120,relative] node[pos=0.8]{\rotatebox{90}{\footnotesize $\bowtie$}} (p);
 \draw(l)to[out=-20,in=-120,relative] (p);
 \draw(-1.2,0)--(1.2,0);
 \draw(l) ..controls(-0.8,1)and(0.1,1)..(d); \draw(l) ..controls(-0.2,0)and(0.2,0.2)..(d);
 \draw(r) ..controls(0.7,0)and(0.2,0.3)..(d); \draw(r) ..controls(0.9,0.9)and(0.2,1)..(d);
 \fill(p)circle(0.07); \fill(l)circle(0.07); \fill(r)circle(0.07); \fill(d)circle(0.07);
\end{tikzpicture}
\\
\begin{tikzpicture}[baseline=0mm]
 \coordinate(0)at(0,0); \coordinate(l)at(-0.5,0); \coordinate(r)at(0.5,0); \coordinate(d)at(0,-1);
 \draw(0) circle (1);
 \draw(d)to[out=-20,in=-120,relative](l);
 \draw(d)to[out=20,in=120,relative] node[pos=0.8]{\rotatebox{0}{\footnotesize $\bowtie$}}(l);
 \draw(d)to[out=-20,in=-120,relative] node[pos=0.8]{\rotatebox{0}{\footnotesize $\bowtie$}}(r);
 \draw(d)to[out=20,in=120,relative](r);
 \fill[pattern=north east lines] (-1.2,0)arc(180:-180:12mm)--(-1,0)arc(-180:180:10mm);
 \fill(l)circle(0.07); \fill(r)circle(0.07); \fill(d)circle(0.07);
 \node at(0,0.3) {$\triangle$};
\end{tikzpicture}
 \hspace{3mm}&\Longrightarrow&\hspace{3mm}
 (1)\ 
\begin{tikzpicture}[baseline=0mm]
 \coordinate(0)at(0,0); \coordinate(l)at(-0.5,0); \coordinate(r)at(0.5,0); \coordinate(d)at(0,-1);
 \coordinate(u)at(0,1);
 \draw(0) circle (1); \draw(u)--(d);
 \draw(d)to[out=-20,in=-120,relative](l);
 \draw(d)to[out=20,in=120,relative] node[pos=0.8]{\rotatebox{0}{\footnotesize $\bowtie$}}(l);
 \draw(d)to[out=-20,in=-120,relative] node[pos=0.8]{\rotatebox{0}{\footnotesize $\bowtie$}}(r);
 \draw(d)to[out=20,in=120,relative](r);
 \fill[pattern=north east lines] (-1.2,0)arc(180:-180:12mm)--(-1,0)arc(-180:180:10mm);
 \fill(l)circle(0.07); \fill(r)circle(0.07); \fill(d)circle(0.07); \fill(u)circle(0.07);
\end{tikzpicture}
 \hspace{5mm}
 (2)\ 
\begin{tikzpicture}[baseline=0mm]
 \coordinate(0)at(0,0); \coordinate(l)at(-0.5,0); \coordinate(r)at(0.5,0); \coordinate(d)at(0,-1);
 \coordinate(u)at(0,0.5);
 \draw(0) circle (1); \draw(u)--(d);
 \draw(d)to[out=-20,in=-120,relative](l);
 \draw(d)to[out=20,in=120,relative] node[pos=0.8]{\rotatebox{0}{\footnotesize $\bowtie$}}(l);
 \draw(d)to[out=-20,in=-120,relative] node[pos=0.8]{\rotatebox{0}{\footnotesize $\bowtie$}}(r);
 \draw(d)to[out=20,in=120,relative](r);
 \draw(d) ..controls(-1,-0.8)and(-1,0.5)..(u);
 \draw(d) ..controls(1,-0.8)and(1,0.5)..(u);
 \fill[pattern=north east lines] (-1.2,0)arc(180:-180:12mm)--(-1,0)arc(-180:180:10mm);
 \fill(l)circle(0.07); \fill(r)circle(0.07); \fill(d)circle(0.07); \fill(u)circle(0.07);
\end{tikzpicture}
 \hspace{5mm}
 (3)\ 
\begin{tikzpicture}[baseline=0mm]
 \coordinate(0)at(0,0); \coordinate(l)at(-0.5,-0.2); \coordinate(r)at(0.5,-0.2); \coordinate(d)at(0,-1);
 \coordinate(u)at(0,0.3);
 \draw(0) circle (1); \draw[pattern=north east lines] (0,0.5) circle (0.2); \draw(u)--(d);
 \draw(d)to[out=-20,in=-120,relative](l);
 \draw(d)to[out=20,in=120,relative] node[pos=0.8]{\rotatebox{0}{\footnotesize $\bowtie$}}(l);
 \draw(d)to[out=-20,in=-120,relative] node[pos=0.8]{\rotatebox{0}{\footnotesize $\bowtie$}}(r);
 \draw(d)to[out=20,in=120,relative](r);
 \draw(d) ..controls(-1,-0.8)and(-1,0.3)..(u);
 \draw(d) ..controls(1,-0.8)and(1,0.3)..(u);
 \draw(d) ..controls(1,-0.85)and(0.85,-0.1)..(0.85,0);
 \draw(0.85,0) ..controls(0.8,0.7)and(0.3,0.85)..(0,0.85);
 \draw(0,0.85) ..controls(-0.5,0.85)and(-0.5,0.2)..(u);
 \fill[pattern=north east lines] (-1.2,0)arc(180:-180:12mm)--(-1,0)arc(-180:180:10mm);
 \fill(l)circle(0.07); \fill(r)circle(0.07); \fill(d)circle(0.07); \fill(u)circle(0.07);
\end{tikzpicture}
\end{eqnarray*}
 In all cases, $T'$ has neither loops nor tagged arcs connecting punctures since $T$ is so. Thus it also has CIV by Theorem \ref{thm:CIV}.
\end{proof}

\begin{proof}[Proof of Theorem \ref{thm:CIV cS}(1)]
If $\partial S=\emptyset$, then any tagged arc of $\cS$ connects punctures. Thus any $T \in \bT_{\cS}$ does not have CIV by Theorem \ref{thm:CIV}. Therefore, we assume that $\partial S \neq \emptyset$. Let $g$ be the genus of $S$ and $P$ the set of punctures of $\cS$. We consider five cases.\vspace{2mm}

\noindent\underline{Case 1. $g=0$.}
By Theorem \ref{thm:CIV cS}(2) and Proposition \ref{prop:adding}, there is a $T\in\bT_{\cS}$ with CIV except for a monogon with at least two punctures since we assume that $\cS$ is not a monogon with at most one puncture. For a monogon with exactly two punctures, two pairs of conjugate arcs form a tagged triangulation with CIV as in the proof of Proposition \ref{prop:adding}. By Proposition \ref{prop:adding} again, there is a $T\in\bT_{\cS}$ with CIV for any $\cS$ with $g=0$.\vspace{2mm}

\noindent\underline{Case 2. $g \ge 1, |M \setminus P|=1$.}
Suppose that there is a $T\in\bT_{\cS}$ with CIV, that is, $T$ has neither loops nor tagged arcs connecting punctures by Theorem \ref{thm:CIV}. By this condition, $T$ must be constructed as follows:\vspace{-3mm}
\[
\begin{tikzpicture}[baseline=0mm]
 \coordinate(0)at(0,0); \coordinate(u) at(0,0.7); \coordinate(d) at(0,-0.7);
 \draw(0)circle(7mm);
 \draw[pattern=north east lines] (0,-0.3) circle (4mm);
 \fill(u)circle(0.07); \fill(d)circle(0.07);
 \node at(0,0.25){$b$}; \node[fill=white,inner sep=1]at(-0.7,0){$l_1$}; \node[fill=white,inner sep=2]at(0.7,0){$r$};
\end{tikzpicture}
\hspace{3mm}\Longrightarrow
\begin{tikzpicture}[baseline=0mm]
 \coordinate(0)at(0,0); \coordinate(u)at(0,0.7); \coordinate(d)at(0,-0.7); \coordinate(p)at(-1,-0.3);
 \draw(0)circle(7mm);
 \draw[pattern=north east lines] (0,-0.3) circle (4mm);
 \draw(d)..controls(-2,-1.3)and(-2,1.3)..node[fill=white,inner sep=1]{$l_2$}(u);
 \draw(d)to[out=200,in=-110](p);
 \draw(d)to[out=180,in=-40]node[pos=0.8]{\rotatebox{50}{\footnotesize $\bowtie$}}(p);
 \fill(u)circle(0.07); \fill(d)circle(0.07); \fill(p)circle(0.07);
 \node at(0,0.25){$b$}; \node[fill=white,inner sep=1]at(-0.7,0){$l_1$}; \node[fill=white,inner sep=2]at(0.7,0){$r$};
\end{tikzpicture}
\hspace{3mm}\Longrightarrow\cdots\Longrightarrow
\begin{tikzpicture}[baseline=0mm]
 \coordinate(0)at(0,0); \coordinate(u)at(0,0.7); \coordinate(d)at(0,-0.7); \coordinate(p)at(-1,-0.3); \coordinate(q)at(1,-0.3);
 \draw(0)circle(7mm);
 \draw[pattern=north east lines] (0,-0.3) circle (4mm);
 \draw(d)..controls(-2,-1.3)and(-2,1.3)..node[fill=white,inner sep=1]{$l_2$}(u);
 \draw(d)to[out=200,in=-110](p);
 \draw(d)to[out=180,in=-40]node[pos=0.8]{\rotatebox{50}{\footnotesize $\bowtie$}}(p);
 \draw(d)..controls(2,-1.3)and(2,1.3)..node[fill=white,inner sep=1]{\ \ $l_{|P|-1}$}(u);
 \draw(d)to[out=-20,in=-80](q);
 \draw(d)to[out=0,in=-140]node[pos=0.8]{\rotatebox{-50}{\footnotesize $\bowtie$}}(q);
 \fill(u)circle(0.07); \fill(d)circle(0.07); \fill(p)circle(0.07); \fill(q)circle(0.07);
 \node at(0,0.25){$b$}; \node[fill=white,inner sep=1]at(-0.7,0){$l_1$}; \node[fill=white,inner sep=1]at(0.7,0){$l_{|P|}$};
\end{tikzpicture}
\vspace{-3mm}\]
In fact, for the unique boundary segment $b$, a puzzle piece of $T$ with $b$ must be a triangle $brl_1$ whose one vertex is a puncture. If $l_1=r$, then $S$ is a sphere, a contradiction. Suppose that $l_1\neq r$. Then another puzzle pieces of $T$ with $l_1$ must be a digon $l_1l_2$. Similarly, a new puzzle piece of $T$ with $l_i$ must be a digon $l_il_{i+1}$ for $1 \le i \le |P|-1$, and $l_{|P|}$ must be $r$. Then $\cS$ is a sphere, a contradiction. Therefore, there is no $T\in\bT_{\cS}$ with CIV.\vspace{2mm}

\noindent\underline{Case 3. $g \ge 1, |M|=2, P=\emptyset$.}
Suppose that there is a $T\in\bT_{\cS}$ with CIV. By $P=\emptyset$, all puzzle pieces on $\cS$ are triangles. Moreover, $\cS$ has one of the following:
\begin{itemize}
 \item[(3a)] one boundary component with two marked points;
 \item[(3b)] two boundary components with one marked point.
\end{itemize}
In the case (3a), a triangle of $T$ with a boundary segment must contain a loop, a contradiction. In the case (3b), there is at least one $\gamma\in\bA_{\cS}$ connecting the distinct marked points. Then each triangle of $T$ with $\gamma$ must contain a boundary segment as follows:
\[
\begin{tikzpicture}[baseline=0mm]
 \coordinate(l) at(-0.7,0); \coordinate(r) at(0.7,0);
 \draw[pattern=north east lines] (-1,0) circle (3mm); \draw[pattern=north east lines] (1,0) circle (3mm);
 \draw(l)--node[fill=white,inner sep=1]{$\gamma$}(r);
 \draw(l) ..controls(-0.6,0.5)and(-1.5,0.6)..(-1.5,0); \draw(r) ..controls(-0.7,-0.7)and(-1.5,-0.7)..(-1.5,0);
 \draw(r) ..controls(0.6,-0.5)and(1.5,-0.6)..(1.5,0); \draw(l) ..controls(0.7,0.7)and(1.5,0.7)..(1.5,0);
 \fill(l)circle(0.07); \fill(r)circle(0.07);
\end{tikzpicture}
\]
It reduces to (3a), hence a contradiction.\vspace{2mm}

\noindent\underline{Case 4. $g \ge 1, |M\setminus P|\ge 3$.}
We construct $T\in\bT_{\cS}$ with CIV for $|M|=3$ and $P=\emptyset$, thus for all cases by Proposition \ref{prop:adding}. In which case, $\cS$ has one of the following:
\begin{itemize}
 \item[(4a)] one boundary component with three marked points;
 \item[(4b)] two boundary components, one has one marked point and the other has two marked points;
 \item[(4c)] three boundary components with one marked point.
\end{itemize}
In the case (4a), we take the following triangle $\triangle$:
\[
\begin{tikzpicture}[baseline=2mm]
 \coordinate(0)at(0,0); \path(0) ++(90:3mm) coordinate(u); \path(0) ++(180:3mm) coordinate(l); \path(0) ++(0:3mm) coordinate(r);
 \draw[pattern=north east lines] (0) circle (3mm);
 \draw(-3,0.5)..controls(-3,-1)and(-0.5,-1)..(-0.3,-1); \draw(2.5,-0.5)..controls(2,-0.5)and(1,-1)..(-0.3,-1);
 \draw(-3,0.5)..controls(-3,2)and(-0.5,2)..(-0.3,2); \draw(2.5,1.5)..controls(2,1.5)and(1,2)..(-0.3,2);
 \draw(-1.3,1.1)..controls(-1,1)and(-0.5,1)..(-0.3,1); \draw(0.8,1.1)..controls(0.5,1)and(0,1)..(-0.3,1);
 \draw(-1,1.04)..controls(-0.5,1.15)and(0,1.15)..(0.5,1.04);
 \draw[blue](u)to[out=90,in=0](-0.3,1); \draw[blue,dotted](-0.3,1)to[out=180,in=180](-0.3,-1); \draw[blue](r)to[out=-90,in=0](-0.3,-1);
 \draw[blue](u)to[out=90,in=0](-0.5,1); \draw[blue,dotted](-0.5,1)to[out=180,in=180](-0.5,-1); \draw[blue](l)to[out=-90,in=0](-0.5,-1);
 \fill(u)circle(0.07); \fill(l)circle(0.07); \fill(r)circle(0.07);
 \node at(0.2,0.45){$m$}; \node at(-0.55,0){$m'$}; \node at(0.65,0){$m''$}; \node at(-0.03,-0.6){$\triangle$};
\end{tikzpicture}
\]
We obtain a marked surface $\cS'$ from $\cS$ by cutting along the sides of $\triangle$ and removing $\triangle$. Thus its genus is $g-1$ and it has two boundary components with two marked points, and no punctures. We take the following triangles of $\cS'$:
\[
\begin{tikzpicture}[baseline=2mm]
 \coordinate(0)at(-0.7,0); \path(0) ++(90:3mm) coordinate(u); \path(0) ++(-90:3mm) coordinate(d);
 \coordinate(0')at(0.7,0); \path(0') ++(90:3mm) coordinate(u'); \path(0') ++(-90:3mm) coordinate(d');
 \draw[pattern=north east lines] (0) circle (3mm); \draw[pattern=north east lines] (0') circle (3mm);
 \draw(u)--(u');
 \draw(u)..controls(-1.3,0.3)and(-1.3,-0.7)..(-0.7,-0.7); \draw(u')..controls(0,0.3)and(0,-0.7)..(-0.7,-0.7);
 \draw(u)..controls(-1.4,0.4)and(-1.4,-0.8)..(-0.7,-0.8); \draw(d')..controls(0.2,-0.3)and(-0.2,-0.8)..(-0.7,-0.8);
 \draw(u')..controls(0,0.3)and(0,-0.3)..(d);
 \fill(u)circle(0.07); \fill(u')circle(0.07); \fill(d)circle(0.07); \fill(d')circle(0.07);
 \node[above]at(u){$m'$}; \node[above]at(u'){$m''$}; \node[below]at(d){$m$}; \node[below]at(d'){$m$};
\end{tikzpicture}
\]
If $g=1$, then they form a tagged triangulation $T'$ of $\cS'$. Considering $T'$ as triangles on $\cS$, we get $T=T'\cup\triangle\in\bT_{\cS}$ with CIV since their tagged arcs are not loops. If $g>1$, then we can repeat the above process since it reduces to the case (4a) for genus $g-1$. By induction on $g$, we get $T\in\bT_{\cS}$ with CIV. The cases (4b) and (4c) reduce to (4a) by considering the following triangles:
\[
\text{(4b)}
\begin{tikzpicture}[baseline=7mm]
 \coordinate(0)at(-0.7,0); \path(0) ++(90:3mm) coordinate(u); \path(0) ++(-90:3mm) coordinate(d);
 \coordinate(0')at(0.7,0); \path(0') ++(180:3mm) coordinate(l');
 \draw[pattern=north east lines] (0) circle (3mm); \draw[pattern=north east lines] (0') circle (3mm);
 \draw(u)..controls(-0.2,0.3)and(0.2,0.2)..(l'); \draw(d)..controls(-0.2,-0.3)and(0.2,-0.2)..(l');
 \draw(l')..controls(0.3,0.6)and(1.2,0.6)..(1.2,0); \draw(d)..controls(0.3,-0.6)and(1.2,-0.6)..(1.2,0);
 \fill(u)circle(0.07); \fill(d)circle(0.07); \fill(l')circle(0.07);
\end{tikzpicture}
\hspace{10mm}
\text{(4c)}
\begin{tikzpicture}[baseline=12mm]
 \coordinate(0)at(-0.7,0); \path(0) ++(60:3mm) coordinate(l);
 \coordinate(0')at(0.7,0); \path(0') ++(120:3mm) coordinate(r);
 \coordinate(0'')at(0,1.2); \path(0'') ++(-90:3mm) coordinate(u);
 \draw[pattern=north east lines] (0) circle (3mm); \draw[pattern=north east lines] (0') circle (3mm); \draw[pattern=north east lines] (0'') circle (3mm);
 \draw(l)--(r)--(u)--(l);
 \draw(l)..controls(-1.3,0.6)and(-1.3,-0.5)..(-0.7,-0.5); \draw(r)..controls(0,0.3)and(0,-0.5)..(-0.7,-0.5);
 \draw(r)..controls(0.1,0)and(0.3,-0.45)..(0.7,-0.45); \draw(0.7,-0.45)..controls(1,-0.45)and(1.15,-0.2)..(1.15,0); \draw(u)..controls(0.4,0.5)and(1.1,0.7)..(1.15,0);
 \draw(u)..controls(0.6,0.85)and(0.56,1.65)..(0,1.65); \draw(0,1.65)..controls(-0.3,1.65)and(-0.45,1.4)..(-0.45,1.2); \draw(l)..controls(-0.3,0.6)and(-0.5,1)..(-0.45,1.2);
 \fill(l)circle(0.07); \fill(r)circle(0.07); \fill(u)circle(0.07);
\end{tikzpicture}
\]

\noindent\underline{Case 5. $g \ge 1, |M\setminus P|=2, |P| \ge 1$.}
We construct $T\in\bT_{\cS}$ with CIV for $|P|=1$, thus for all cases by Proposition \ref{prop:adding}. In which case, $\cS$ has one of the following:
\begin{itemize}
 \item[(5a)] one boundary component with two marked points;
 \item[(5b)] two boundary components with one marked point.
\end{itemize}
The both cases reduce to (4a) by considering the following triangles:
\[
\text{(5a)}\ 
\begin{tikzpicture}[baseline=5mm]
 \coordinate(0)at(0,0); \path(0) ++(90:7mm) coordinate(u); \path(0) ++(180:3mm) coordinate(l); \path(0) ++(0:3mm) coordinate(r);
 \draw[pattern=north east lines] (0) circle (3mm);
 \draw(l)..controls(-0.3,0.3)and(-0.25,0.5)..(u); \draw(r)..controls(0.3,0.3)and(0.25,0.5)..(u);
 \fill(u)circle(0.07); \fill(l)circle(0.07); \fill(r)circle(0.07);
\end{tikzpicture}
\hspace{10mm}
\text{(5b)}
\begin{tikzpicture}[baseline=5mm]
 \coordinate(0)at(-0.7,0); \path(0) ++(90:3mm) coordinate(u);
 \coordinate(0')at(0.7,0); \path(0') ++(90:3mm) coordinate(u');
 \coordinate(p)at(0,0.7);
 \draw[pattern=north east lines] (0) circle (3mm); \draw[pattern=north east lines] (0') circle (3mm);
 \draw(u)--(u')--(p)--(u);
 \draw(u)..controls(-1.2,0.3)and(-1.2,-0.4)..(-0.7,-0.4); \draw(u')..controls(0,0.3)and(-0,-0.4)..(-0.7,-0.4);
 \draw(u)..controls(-1.3,0.4)and(-1.3,-0.55)..(-0.7,-0.55); \draw(u')..controls(1.3,0.3)and(1.3,-0.55)..(0.7,-0.55);
 \draw(-0.7,-0.55)--(0.7,-0.55);
 \fill(u)circle(0.07); \fill(u')circle(0.07); \fill(p)circle(0.07);
\end{tikzpicture}
\]

The cases $1$ to $5$ above cover all cases of $\cS$ with $\partial S\neq\emptyset$. They imply that there is at least one $T \in\bT_{\cS}$ with CIV if and only if $\cS$ is one of the cases 1, 4, and 5. Thus the assertion holds.
\end{proof}

\begin{proof}[Proof of Theorem \ref{thm:CIV cS} for wCIV]
The assertions follow from (1), (2), and Proposition \ref{prop:once-punctured CIV}.
\end{proof}

\begin{example}\label{ex:acyclic}
For an annulus with exactly three marked points on the boundary, we consider the tagged triangulations
\[
T=
\begin{tikzpicture}[baseline=-1mm]
 \coordinate(0)at(0,0); \coordinate(u)at(0,1); \coordinate(c)at(0,0.2); \coordinate(d)at(0,-1);
 \draw[pattern=north east lines](0)circle(2mm); \draw(0)circle(10mm);
 \draw[blue](d)..controls(-0.8,0)and(-0.3,0.5)..node[fill=white,inner sep=2,pos=0.3]{$1$}(c);
 \draw[blue](d)..controls(0.8,0)and(0.3,0.5)..node[fill=white,inner sep=2,pos=0.3]{$2$}(c);
 \draw[blue](d)..controls(-1,-0.5)and(-1,0.6)..(0,0.6);
 \draw[blue](d)..controls(1,-0.5)and(1,0.6)..(0,0.6)node[fill=white,inner sep=2]{$3$};
 \fill(u)circle(0.07); \fill(c)circle (0.07); \fill(d)circle(0.07);
\end{tikzpicture}\ ,\hspace{3mm}
\mu_3T=
\begin{tikzpicture}[baseline=-1mm]
 \coordinate(0)at(0,0); \coordinate(u)at(0,1); \coordinate(c)at(0,0.2); \coordinate(d)at(0,-1);
 \draw[pattern=north east lines](0)circle(2mm); \draw(0)circle(10mm);
 \draw[blue](d)..controls(-0.8,0)and(-0.3,0.5)..node[fill=white,inner sep=2,pos=0.3]{$1$}(c);
 \draw[blue](d)..controls(0.8,0)and(0.3,0.5)..node[fill=white,inner sep=2,pos=0.3]{$2$}(c);
 \draw[blue](c)--node[fill=white,inner sep=1]{$3'$}(u);
 \fill(u)circle(0.07); \fill(c)circle (0.07); \fill(d)circle(0.07);
\end{tikzpicture}\ .
\]
By Theorem \ref{thm:CIV}, $T$ does not have CIV and $\mu_3T$ has CIV. In fact, for $\gamma\in \mu_3T$ and any tagged arc $\delta$,
\[
(\gamma|\delta)=A_{\gamma,\delta}=\Int(\gamma,\delta).
\]
On the other hand, for another loop $\rho(3)$, we have
\[
(3|\rho(3))=A_{3,\rho(3)}+B_{3,\rho(3)}=2+(-1)=1,\text{ and }\Int(3,\rho(3))=A_{3,\rho(3)}=2.
\]
Thus we have $(T|\rho(3))=(1,1,1)\neq(1,1,2)=\Int(T,\rho(3))$.
\end{example}

\section{Categorification of cluster algebras}\label{sec:categorification}

We denote by $Q_0$ and $Q_1$ the set of vertices and arrows of a quiver $Q$, respectively. We say that an oriented cycle of length one (resp., two) in $Q$ is a \emph{loop} (resp., a \emph{$2$-cycle}).

\subsection{Cluster algebras}

We recall (skew-symmetric) cluster algebras with principal coefficients \cite{FZ07}. For that, we need to prepare some notations. Let $n \in \bZ_{\ge 0}$ and $\cF:=\bQ(t_1,\ldots,t_{2n})$ be the field of rational functions in $2n$ variables over $\bQ$. A \emph{seed with coefficients} is a pair $(\cl,Q)$ consisting of the following data:
\begin{enumerate}
 \item $\cl=(x_1,\ldots,x_n,y_1,\ldots,y_n)$ is a free generating set of $\cF$ over $\bQ$.
 \item $Q$ is a quiver without loops nor $2$-cycles such that $Q_0=\{1,\ldots,2n\}$.
\end{enumerate}
 Then we refer to the tuple $(x_1,\ldots,x_n)$ as the \emph{cluster}, to each $x_i$ as a \emph{cluster variable} and $y_i$ as a \emph{coefficient}. For a seed $(\cl,Q)$ with coefficients, the \emph{mutation $\mu_k(\cl,Q)=(\cl',\mu_kQ)$ at $k$} $(1 \le k \le n)$ is defined as follows:
\begin{enumerate}
 \item $\cl'=(x'_1,\ldots,x'_n,y_1,\ldots,y_n)$ is defined by
 \[
  x_k x'_k = \prod_{(j \rightarrow k)\in Q_1}x_j\prod_{(j \rightarrow k)\in Q_1}y_{j-n}+\prod_{(j \leftarrow k)\in Q_1}x_j\prod_{(j \leftarrow k)\in Q_1}y_{j-n} \ \ \text{and} \ \ x'_i = x_i \ \ \text{if} \ \ i \neq k,
 \]
where $x_{n+1}=\cdots=x_{2n}=1=y_{1-n}=\cdots=y_0$.
 \item $\mu_kQ$ is the quiver obtained from $Q$ by the following steps:\par
 \begin{enumerate}
  \item For any path $i \rightarrow k \rightarrow j$, add an arrow $i \rightarrow j$.
  \item Reverse all arrows incident to $k$.
  \item Remove a maximal set of disjoint $2$-cycles.
 \end{enumerate}
\end{enumerate}

We remark that $\mu_k$ is an involution, that is, we have $\mu_k\mu_k(\cl,Q)=(\cl,Q)$. Moreover, it is elementary that $\mu_k(\cl,Q)$ is also a seed with coefficients.

For a quiver $Q$ without loops nor $2$-cycles such that $Q_0=\{1,\ldots,n\}$, we obtain the quiver $\hat{Q}$ (resp., $\check{Q}$) from $Q$ by adding vertices $\{1',\ldots,n'\}$ and arrows $\{i \rightarrow i' \mid 1 \le i \le n\}$ (resp., $\{i \leftarrow i' \mid 1 \le i \le n\}$). We fix a seed $(\cl=(x_1,\ldots,x_n,y_1,\ldots,y_n),\hat{Q})$ with coefficients, called the \emph{initial seed}. We also call the tuple $(x_1,\ldots,x_n)$ the \emph{initial cluster}, and each $x_i$ the \emph{initial cluster variable}.

\begin{definition}
The \emph{cluster algebra $\cA(Q)=\cA(\cl,\hat{Q})$ with principal coefficients} for the initial seed $(\cl,\hat{Q})$ is a $\bZ$-subalgebra of $\cF$ generated by all cluster variables and coefficients obtained from $(\cl,\hat{Q})$ by sequences of mutations.
\end{definition}

We denote by $\clv Q$ (resp., $\clus Q$) the set of cluster variables (resp., clusters) of $\cA(Q)$. One of the remarkable properties of cluster algebras with principal coefficients is the strongly Laurent phenomenon as follows.

\begin{proposition}[{\cite[Proposition 3.6]{FZ07}}]\label{prop:Laurent phenomenon}
Every $x\in\clv Q$ is expressed by a Laurent polynomial of $x_1,\ldots,x_n$, $y_1,\ldots,y_n$
\[
x=\frac{F(x_1, \ldots,x_n,y_1,\ldots,y_n)}{x_1^{d_1(x)} \cdots x_n^{d_n(x)}},
\]
where $d_i(x) \in \bZ$ and $F(x_1, \ldots, x_n,y_1,\ldots,y_n) \in \bZ[x_1,\ldots, x_n,y_1,\ldots,y_n]$ is not divisible by any $x_i$.
\end{proposition}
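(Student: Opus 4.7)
The plan is to prove the statement by induction on the length $\ell$ of the shortest sequence of mutations producing the seed containing $x$ from the initial seed $(\cl, \hat{Q})$. For $\ell = 0$, the cluster variables $x = x_i$ are trivially Laurent polynomials (with $F = x_i$ and $d_j(x) = -\delta_{ij}$, or viewed in reduced form). The heart of the argument is the inductive step: if we have already verified the Laurent expression for cluster variables in a seed $(\cl', \hat{Q}')$ obtained by a sequence of mutations, and we mutate once more at some vertex $k$ to obtain a new cluster variable $x_k''$, we know from the exchange relation that $x_k''$ is a Laurent polynomial in $\cl'$ with coefficients in $\bZ[y_1, \ldots, y_n]$; we must upgrade this to Laurentness in the initial cluster $\cl$.

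To make the induction work, I would follow the Fomin--Zelevinsky strategy and prove the stronger statement that every cluster variable belongs to the \emph{upper cluster algebra}
\[
\mathcal{U}(Q) \;=\; \bigcap_{(\cl', \hat{Q}')} \bZ[x_1'^{\pm 1}, \ldots, x_n'^{\pm 1}, y_1, \ldots, y_n],
\]
where the intersection runs over all seeds mutation-equivalent to $(\cl, \hat{Q})$. The main technical tool is the \emph{Caterpillar Lemma}: an element that is a Laurent polynomial in a cluster and in several neighboring clusters (obtained by a prescribed pattern of three or four successive mutations) is automatically Laurent in the cluster reached by one more mutation. Iterating this along the mutation path gives that $x \in \bZ[x_1^{\pm 1}, \ldots, x_n^{\pm 1}, y_1, \ldots, y_n]$. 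The freeness of $\cl$ over $\bQ$ guarantees that the Laurent expression is unique, so the exponents $d_i(x)$ and the polynomial $F$ are well-defined.

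For the refinement that $F$ is not divisible by any $x_i$, I would argue as follows. Writing $x$ as a Laurent polynomial $x = G/(x_1^{a_1}\cdots x_n^{a_n})$ for some $G \in \bZ[x_1, \ldots, x_n, y_1, \ldots, y_n]$ and $a_i \in \bZ_{\ge 0}$, cancel any common factor $x_i$ from numerator and denominator, and define $d_i(x) := a_i$ after cancellation. The resulting $F$ is then by construction coprime to each $x_i$. It remains to check that this operation produces integer exponents $d_i(x)$, which is immediate since each cancellation reduces $a_i$ by $1$; and that $F$ is still the \emph{same} polynomial arising from the mutation recursion, which follows since the denominators in the exchange relation are monomials in $x_k$ alone and the numerators are never divisible by $x_k$ (each has exactly two monomials, one containing a factor for every arrow entering $k$ and one for every arrow leaving $k$, and by the no-loop assumption $k$ itself never appears).

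The main obstacle is the Caterpillar Lemma, which requires delicate bookkeeping of how the exchange binomials transform under iterated mutations and uses that $\bZ[x_1', \ldots, x_n', y_1, \ldots, y_n]$ is a UFD to extract greatest common divisors. Handling principal coefficients is essentially the same as the coefficient-free case, since the $y_i$ are frozen variables that never get inverted; one just carries them along as constants throughout. Once the Caterpillar Lemma is established, the remaining verification of primitivity of $F$ is a straightforward divisibility check propagated through the inductive step.
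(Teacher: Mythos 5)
The paper itself contains no proof of this proposition: it is quoted from Fomin--Zelevinsky (FZ07, Proposition 3.6), so there is no in-paper argument to compare yours against. Your sketch follows the standard Fomin--Zelevinsky route (induction along mutation sequences, the Caterpillar Lemma, gcd/UFD bookkeeping), which is indeed how the cited source proceeds.

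Two points need attention, one of them substantive. The statement here is the \emph{strong} Laurent phenomenon: the coefficients lie in $\bZ[y_1,\dots,y_n]$, not merely in $\bZ[y_1^{\pm1},\dots,y_n^{\pm1}]$. Running the general Laurent phenomenon with coefficient semifield $\mathrm{Trop}(y_1,\dots,y_n)$ only gives membership in $\bZ[y_1^{\pm1},\dots,y_n^{\pm1}][x_1^{\pm1},\dots,x_n^{\pm1}]$, so your dismissal of the coefficients --- ``the $y_i$ are never inverted, so carry them along as constants'' --- is the right idea but not yet an argument: when a far-away cluster variable is rewritten in terms of the initial cluster one repeatedly divides by earlier cluster variables whose Laurent expansions involve the $y_i$, so denominators in the $y_i$ are not a priori excluded. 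What is actually needed is to run the Caterpillar Lemma with ground ring $\bZ[y_1,\dots,y_n]$ and recheck its coprimality hypotheses for the exchange binomials in $\bZ[y_1,\dots,y_n][x_1^{\pm1},\dots,x_n^{\pm1}]$ (this is FZ02, Proposition 11.2); in particular one uses that each exchange binomial has a monomial free of any given $y_i$, hence is not divisible by $y_i$ there. This is exactly where the strength of the proposition lies, and your proposal glosses over it. Secondly, your base case is internally inconsistent: if $d_j(x_i)=-\delta_{ij}$ then the reduced form forces $F=1$, not $F=x_i$; indeed $F=x_i$ would violate the requirement that $F$ be divisible by no $x_j$. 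Both points are fixable, and with them your plan matches the proof in the cited reference.
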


\begin{definition}
Using the notations in Proposition \ref{prop:Laurent phenomenon}, we call $d(x):=(d_i(x))_{1 \le i \le n}$ the \emph{denominator vector} of $x$, and $f(x):=(f_i(x))_{1 \le i \le n}$ the \emph{$f$-vector} of $x$, where $f_i(x)$ is the maximal degree of $y_i$ in the \emph{$F$-polynomial} $F(1,\ldots,1,y_1,\ldots,y_n) \in \bZ[y_1,\ldots,y_n]$ of $x$.
\end{definition}

We consider the following property of $\cA(Q)$ or $Q$:
\begin{equation*}\label{eq:d=f}
\tag{d=f} \text{$d(x)=f(x)$ for any $x\in\clv Q \setminus \{\text{initial cluster variables}\}$.}
\end{equation*}
In \cite[Conjecture 7.17]{FZ07}, it was conjectured that any cluster algebra satisfies \eqref{eq:d=f}, but it does not hold in general. In fact, the following counterexample was given in \cite[Subsection 6.4]{FK10}.

\begin{example}\label{ex:counterexample}
Let $Q$ be a quiver
\[
\begin{tikzpicture}[baseline=3mm]
 \node(1)at(-1,0){$1$}; \node(2)at(1,0){$2$}; \node(3)at(0,1){$3$};
 \draw[->](2)--(3); \draw[->](3)--(1);
 \draw[transform canvas={yshift=2pt},->](1)--(2);
 \draw[transform canvas={yshift=-2pt},->](1)--(2);
\end{tikzpicture}
\]
and we compute the following sequence of mutations:
\[
\begin{tikzpicture}[baseline=3mm]
 \node(1)at(-1,0){$x_1$}; \node(2)at(1,0){$x_2$}; \node(3)at(0,1){$x_3$};
 \node(1')at(-1,-1){$y_1$}; \node(2')at(1,-1){$y_2$}; \node(3')at(0,2){$y_3$};
 \draw[transform canvas={yshift=2pt},->](1)--(2); \draw[transform canvas={yshift=-2pt},->](1)--(2);
 \draw[->](2)--(3); \draw[->](3)--(1); \draw[->](1)--(1'); \draw[->](2)--(2'); \draw[->](3)--(3');
\end{tikzpicture}
   \hspace{3mm}\xrightarrow{\mu_3}\hspace{3mm}
\begin{tikzpicture}[baseline=3mm]
 \node(1)at(-1,0){$x_1$}; \node(2)at(1,0){$x_2$}; \node(3)at(0,1){$x_3'$};
 \node(1')at(-1,-1){$y_1$}; \node(2')at(1,-1){$y_2$}; \node(3')at(0,2){$y_3$};
 \draw[->](1)--(2); \draw[<-](2)--(3); \draw[<-](3)--(1); \draw[->](1)--(1'); \draw[->](2)--(2'); \draw[<-](3)--(3');
 \draw[->](2)..controls(1,1)and(0.7,1.7)..(3');
\end{tikzpicture}
   \hspace{3mm}\xrightarrow{\mu_2}\hspace{3mm}
\begin{tikzpicture}[baseline=3mm]
 \node(1)at(-1,0){$x_1$}; \node(2)at(1,0){$x_2'$}; \node(3)at(0,1){$x_3'$};
 \node(1')at(-1,-1){$y_1$}; \node(2')at(1,-1){$y_2$}; \node(3')at(0,2){$y_3$};
 \draw[<-](1)--(2); \draw[->](2)--(3); \draw[<-](3)--(1); \draw[->](1)--(1'); \draw[<-](2)--(2');
 \draw[<-](2)..controls(1,1)and(0.7,1.7)..(3'); \draw[->](1)..controls(-1,1)and(-0.7,1.7)..(3');
 \draw[->](1)--(2'); \draw[->](3)..controls(0,0)and(0.5,-0.5)..(2');
\end{tikzpicture}
   \hspace{3mm}\xrightarrow{\mu_1}\hspace{3mm}
\begin{tikzpicture}[baseline=3mm]
 \node(1)at(-1,0){$x_1'$}; \node(2)at(1,0){$x_2'$}; \node(3)at(0,1){$x_3'$};
 \node(1')at(-1,-1){$y_1$}; \node(2')at(1,-1){$y_2$}; \node(3')at(0,2){$y_3$};
 \draw[transform canvas={yshift=1pt,xshift=1pt},->](2)--(3);
 \draw[transform canvas={yshift=-1pt,xshift=-1pt},->](2)--(3);
 \draw[->](1)--(2); \draw[->](3)--(1); \draw[<-](1)--(1');
 \draw[<-](1)..controls(-1,1)and(-0.7,1.7)..(3');
 \draw[<-](1)--(2'); \draw[->](3)..controls(0,0)and(0.5,-0.5)..(2'); \draw[->](2)--(1');
\end{tikzpicture}
\]
where we put each cluster variable or coefficient on the corresponding vertex and the non-initial cluster variables are expressed by Laurent polynomials
\[
x_3'=\frac{y_3x_1+x_2}{x_3}, x_2'=\frac{y_3x_1^2+x_1x_2+y_2y_3x_3}{x_2x_3}, x_1'=\frac{y_3x_1^2+(y_1y_2y_3^2+1)x_1x_2+y_1y_2y_3x_2^2+y_2y_3x_3}{x_1x_2x_3}.
\]
Then we have that $d(x_1')=(1,1,1) \neq (1,1,2)=f(x_1')$.
\end{example}

\begin{remark}
In this paper, we only consider skew-symmetric cluster algebras. However, we can also consider the property \eqref{eq:d=f} for skew-symmetrizable cluster algebras. In fact, it is known that \eqref{eq:d=f} holds for skew-symmetrizable cluster algebras of rank $2$ \cite[Corollary 10.10]{FZ07} or finite type \cite[Theorem 1.8]{G21}.
\end{remark}

Finally, we prepare the following notation.

\begin{definition}[{\cite{K17}}]
A \emph{reddening sequence} of $Q$ is a sequence $\mu$ of mutations such that $\mu(\hat{Q})=\check{Q}$.
\end{definition}

Note that a reddening sequence is also called a green-to-red sequence \cite{Mu16}, and $Q$ admits a reddening sequence if and only if a seed $(\cl,Q)$ is injective-reachable \cite{Q17}.

\subsection{Categorification of cluster algebras}\label{subsec:categorification}
We briefly recall a categorification of cluster algebras. Let $Q$ be a quiver without loops nor $2$-cycles and $W$ a non-degenerate potential of $Q$ (see \cite{DWZ08}). In particular, such $W$ exists if $K$ is uncountable \cite[Corollary 7.4]{DWZ08}. The mutation of quivers with potentials is defined to be compatible with mutations of quivers, that is, $\mu_k(Q,W)=(\mu_kQ,W')$ for $k\in Q_0$ and some non-degenerate potential $W'$ of $\mu_kQ$. Moreover, $(Q,W)$ defines the Jacobian algebra $J=J(Q,W)$, the Ginzburg differential graded algebra $\Gamma=\Gamma_{Q,W}$, and the generalized cluster category $\cC=\cC_{Q,W}$ (see \cite{A09,DWZ08,G06,K08,K11}). The category $\cC$ is a triangulated category with a rigid object $\Gamma$, that is $\Hom_{\cC}(\Gamma,\Sigma\Gamma)=0$, and $\End_{\cC}(\Gamma)^{\rm op}\simeq J$, where $\Sigma$ is the suspension functor.

For the idempotent $e_i$ associated with $i\in Q_0$, let $\Gamma_i=\Gamma e_i$ and $\Gamma'_i=\Gamma_{\mu_k(Q,W)}e_i$. We take the cone $\Gamma_k^{\ast}$ of the morphism in $\cC$
\[
\Gamma_k\rightarrow\bigoplus_{(k\leftarrow i)\in Q_1}\Gamma_i
\]
whose components are given by the right multiplications of arrows. We say that $\mu_k\Gamma:=(\Gamma/\Gamma_k)\oplus\Gamma_k^{\ast}$ is the \emph{mutation of $\Gamma$ at $k\in Q_0$}. Moreover, there is a triangle equivalence
\[
\cC_{\mu_k(Q,W)}\rightarrow\cC
\]
which sends $\Gamma'_i$ to $\Gamma_i$ for $i\neq k$ and $\Gamma'_k$ to $\Gamma_k^{\ast}$ \cite[Theorem 3.2]{KY11} (see also \cite[Subsection 2.6]{Pl11c}). Then we consider sequences of mutations from $\Gamma$ as follows: For $k_1,\ldots,k_j \in Q_0$, there is a triangle equivalence from $\cC_{\mu_{k_j}\cdots\mu_{k_1}(Q,W)}$ to $\cC$ given by the sequence of the above triangle equivalences
\[
\cC_{\mu_{k_j}\cdots\mu_{k_1}(Q,W)}\rightarrow\cdots\rightarrow\cC_{\mu_{k_1}(Q,W)}\rightarrow\cC,
\]
and $\mu_{k_j}\cdots\mu_{k_1}\Gamma$ is defined by the image of $\Gamma_{\mu_{k_j}\cdots\mu_{k_1}(Q,W)}$. Note that $\mu_{k_j}\cdots\mu_{k_1}\Gamma$ is also rigid. We denote by $\rigid^{\Gamma}\cC$ the set of isomorphism classes of rigid objects in $\cC$ obtained from $\Gamma$ by sequences of mutations, and by $\irigid^{\Gamma}\cC$ the set of isomorphism classes of indecomposable direct summands of $\Gamma'\in\rigid^{\Gamma}\cC$. Let $\mod J$ be the category of finite dimensional left $J$-modules.

\begin{proposition}[{\cite[Lemma 3.2 and Subsection 3.3]{Pl11c}}]\label{prop:eq mod}
There is a full subcategory $\cD$ of $\cC$ including all objects in $\irigid^{\Gamma}\cC$ with an equivalence of categories
\[
\Hom_{\cC}(\Sigma^{-1}\Gamma,-) : \cD / (\Gamma) \xrightarrow{\sim} \mod\End_{\cC}(\Sigma^{-1}\Gamma)^{\rm op}\simeq\mod J,
\]
where $(\Gamma)$ is the ideal of morphisms of $\cC$ which factor through $\Gamma$.
\end{proposition}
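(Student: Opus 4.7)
The plan is to construct $\cD$ explicitly and verify the equivalence along the classical projectivization strategy of Keller--Reiten, adapted by Plamondon to the potentially infinite-dimensional Jacobian case. Define $\cD$ to be the full subcategory of $\cC$ whose objects $X$ admit a triangle
\[
\Gamma_1 \xrightarrow{f} \Gamma_0 \to X \to \Sigma\Gamma_1
\]
with $\Gamma_0,\Gamma_1\in\add\Gamma$ such that $\Hom_{\cC}(\Gamma,f)$ is surjective. This is engineered precisely so that the functor $F := \Hom_{\cC}(\Sigma^{-1}\Gamma,-)$ turns each such presentation triangle into a projective presentation of $FX$ over $J$.

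First I would verify $\irigid^{\Gamma}\cC\subset\cD$. For the summands $\Gamma_i$ of $\Gamma$ the trivial triangle $0\to\Gamma_i\to\Gamma_i\to 0$ serves as a presentation. For a mutated summand $\Gamma_k^{\ast}$, the defining triangle $\Gamma_k\to\bigoplus_{(k\leftarrow i)\in Q_1}\Gamma_i\to\Gamma_k^{\ast}\to\Sigma\Gamma_k$ has exactly the required form, and one arranges the middle arrow to be a minimal right $\add(\Gamma/\Gamma_k)$-approximation so that the surjectivity of $\Hom_{\cC}(\Gamma,-)$ applied to it is automatic. The extension to iterated mutations $\mu_{k_j}\cdots\mu_{k_1}\Gamma$ is then handled by induction using the triangle equivalences $\cC_{\mu_{k_j}\cdots\mu_{k_1}(Q,W)}\simeq\cC$ recalled above together with the observation that $\cD$ is stable under the operations produced by these equivalences.

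Next I would analyse the functor $F$ on $\cD$. Applying $F$ to a presentation triangle of $X\in\cD$ and using the rigidity of $\Gamma$ together with the $2$-Calabi--Yau property of $\cC$ yields an exact sequence
\[
F\Gamma_1\to F\Gamma_0\to FX\to 0.
\]
Since $F\Gamma\simeq J$ as a left $J$-module by $\End_{\cC}(\Sigma^{-1}\Gamma)^{\mathrm{op}}\simeq J$, the terms $F\Gamma_i$ are finitely generated projective $J$-modules, so $FX\in\mod J$. Thus $F$ restricted to $\cD$ lands in $\mod J$ and, since any map factoring through $\add\Gamma$ is killed after applying $F$ (as $F\Gamma$ is projective and the kernel pieces absorb such factorisations), descends to a functor $\bar{F}:\cD/(\Gamma)\to\mod J$.

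Finally I would prove that $\bar{F}$ is an equivalence by checking the three usual conditions. For essential surjectivity, given $M\in\mod J$, take a projective presentation $P_1\to P_0\to M\to 0$, lift it through the identification $F\Gamma\simeq J$ to a morphism $f:\Gamma_1\to\Gamma_0$ in $\add\Gamma$, and put $X:=\mathrm{cone}(f)[-1]$ shifted appropriately; then $X\in\cD$ and $FX\simeq M$. For fullness, lift a given $\varphi:FX\to FY$ to a compatible chain map between the presentation triangles of $X$ and $Y$ and realise it as a morphism $X\to Y$ via the triangle axioms. For faithfulness on the quotient, a diagram chase shows that any $g:X\to Y$ with $Fg=0$ factors through some object of $\add\Gamma$. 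The main obstacle lies precisely in these lifting and factorisation arguments when $J$ is infinite dimensional: one cannot pass to a finite-dimensional truncation as in Amiot's setting, and the surjectivity condition built into the definition of $\cD$ has to be carefully propagated through mutations, cones, and approximation arguments. This is the content of Plamondon's more delicate treatment, which provides the needed stability properties of $\cD$.
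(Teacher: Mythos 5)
The paper offers no proof of this proposition at all: it is quoted from Plamondon's work \cite{Pl11c}, so your sketch can only be measured against the cited argument, and as written it has genuine errors. The first is in your definition of $\cD$. Requiring $\Hom_{\cC}(\Gamma,f)$ to be surjective for a triangle $\Gamma_1\xrightarrow{f}\Gamma_0\to X\to\Sigma\Gamma_1$ with $\Gamma_0,\Gamma_1\in\add\Gamma$ forces, by the long exact sequence together with $\Hom_{\cC}(\Gamma,\Sigma\Gamma_1)=0$, that $\Hom_{\cC}(\Gamma,X)=0$. In particular $X=\Gamma_i$ (with its trivial presentation) violates your condition since $\Hom_{\cC}(\Gamma,\Gamma_i)\neq0$, and the mutated summands $\Gamma_k^{\ast}$ violate it as well because the map $\bigoplus_{(k\leftarrow i)}\Gamma_i\to\Gamma_k^{\ast}$ is generally nonzero; so your $\cD$ does not contain $\irigid^{\Gamma}\cC$, contrary to the very first step you claim to verify.

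The second and more fundamental error is the computation underlying your projectivization step. For $F=\Hom_{\cC}(\Sigma^{-1}\Gamma,-)$ one has $F\Gamma=\Hom_{\cC}(\Sigma^{-1}\Gamma,\Gamma)\cong\Hom_{\cC}(\Gamma,\Sigma\Gamma)=0$ by rigidity of $\Gamma$; it is $F(\Sigma^{-1}\Gamma)\cong\End_{\cC}(\Sigma^{-1}\Gamma)^{\rm op}\simeq J$ that plays the role of the regular module, exactly as the paper uses right after the proposition via $P_i\simeq\Hom_{\cC}(\Sigma^{-1}\Gamma,\Sigma^{-1}\Gamma_i)$. Consequently, applying $F$ to an $\add\Gamma$-presentation does not produce the claimed exact sequence $F\Gamma_1\to F\Gamma_0\to FX\to0$ with projective terms (the first two terms vanish); to get projective presentations you must present objects by $\add(\Sigma^{-1}\Gamma)$, or equivalently work with $\Hom_{\cC}(\Gamma,-)$ and the ideal $(\Sigma\Gamma)$. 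This shift mismatch propagates into your essential surjectivity, fullness and faithfulness arguments, which all lift presentations through the wrong copies of $\Gamma$. Finally, finite dimensionality of $FX$ is not automatic here, since $\cC$ need not be Hom-finite and $J$ may be infinite dimensional; in Plamondon's actual statement the subcategory $\cD$ is cut out inside the presentation subcategory precisely by such a finiteness requirement, and establishing that the reachable rigid objects satisfy it is the real technical content, which your sketch acknowledges only in passing and does not supply.
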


Let $P_i=J e_i$ for $i\in Q_0$. By $\Hom_{\cC}(\Gamma_i,\Sigma\Gamma)=0$ and Proposition \ref{prop:eq mod}, we have
{\setlength\arraycolsep{0.5mm}
\begin{eqnarray*}
\Hom_{J}(P_i,\Hom_{\cC}(\Sigma^{-1}\Gamma,X))&\simeq&\Hom_{J}(\Hom_{\cC}(\Sigma^{-1}\Gamma,\Sigma^{-1}\Gamma_i),\Hom_{\cC}(\Sigma^{-1}\Gamma,X))\\
&\simeq&\Hom_{\cC}(\Sigma^{-1}\Gamma_i,X).
\end{eqnarray*}}
In particular, taking $X=\Sigma^{-1}\Gamma_i$, we have
\begin{equation}\label{eq:End}
\End_{J}(P_i)\simeq\End_{\cC}(\Gamma_i).
\end{equation}

The following is a main result of the categorification of cluster algebras.

\begin{theorem}\label{thm:bijection-sC}
\begin{enumerate}
\item\cite[Theorem 4.1]{Pl11c}\cite[Corollary 3.5]{CKLP13}
There is a bijection
\[
\sC_{\Gamma} : \irigid^{\Gamma}\cC \leftrightarrow \clv Q
\]
which sends $\Gamma_i$ to the initial cluster variable $x_i$ for $i\in Q_0$ and induces a bijection
\[
\sC_{\Gamma} : \rigid^{\Gamma}\cC \leftrightarrow \clus Q
\]
which sends $\Gamma$ to the initial cluster and commutes with mutations.
\item\cite[Proposition 3.1]{DWZ10}\cite[Subsection 3.3]{Pl11a} For $X\in\irigid^{\Gamma}$, we have
\[
f(\sC_{\Gamma}(X))=\udim\Hom_{\cC}(\Sigma^{-1}\Gamma,X),
\]
where the right side is the dimension vector of the $J$-module $\Hom_{\cC}(\Sigma^{-1}\Gamma,X)$.
\end{enumerate}
\end{theorem}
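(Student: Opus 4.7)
The plan is to construct $\sC_{\Gamma}$ as an explicit Caldero--Chapoton--Palu cluster character and to show that it intertwines categorical mutation with cluster mutation. Concretely, for $X \in \irigid^{\Gamma}\cC$ with index $\iota(X)=(\iota_i(X))_{i\in Q_0} \in \bZ^n$ extracted from a minimal $\Gamma$-presentation, I would set
\[
\sC_{\Gamma}(X) := \left(\prod_{i\in Q_0}x_i^{\iota_i(X)}\right)\sum_{e\in\bZ_{\ge 0}^{n}} \chi\bigl(\mathrm{Gr}_e(\Hom_{\cC}(\Sigma^{-1}\Gamma,X))\bigr)\prod_{i\in Q_0}y_i^{e_i},
\]
where $\mathrm{Gr}_e$ denotes the quiver Grassmannian of submodules of dimension vector $e$ and $\chi$ is the topological Euler characteristic. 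A direct calculation using $\Hom_{\cC}(\Sigma^{-1}\Gamma,\Gamma_i)=0$ together with the trivial presentation of $\Sigma^{-1}\Gamma_i$ pins down $\sC_{\Gamma}(\Gamma_i)=x_i$ and $\sC_{\Gamma}(\Gamma)=(x_1,\ldots,x_n)$.

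The heart of the argument, and the main obstacle, is a \emph{multiplication formula}: whenever $\Gamma' \in \rigid^{\Gamma}\cC$ has an indecomposable summand $X$ to be replaced by $X^{\ast}$, and $X \to B \to X^{\ast} \to \Sigma X$ and $X^{\ast} \to B' \to X \to \Sigma X^{\ast}$ are the two exchange triangles, one needs
\[
\sC_{\Gamma}(X)\cdot \sC_{\Gamma}(X^{\ast}) = \sC_{\Gamma}(B) + \sC_{\Gamma}(B').
\]
Proving this requires stratifying the relevant Grassmannians according to how submodules project along the two exchange triangles, then invoking Palu's Euler-form identity to match the prefactors $\prod x_i^{\iota_i}$ on both sides. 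Rigidity of $\Gamma$ and of $\Gamma'$ is essential here, since it forces the vanishing of certain $\mathrm{Ext}^1$-groups that makes the stratification geometric and the Euler characteristic additive.

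With the multiplication formula in hand, part (1) follows by induction on the mutation distance from $\Gamma$: each exchange $\mu_X$ is converted by $\sC_{\Gamma}$ into the cluster exchange relation, so $\sC_{\Gamma}$ sends $\rigid^{\Gamma}\cC$ into $\clus Q$ and commutes with mutations. Surjectivity onto $\clv Q$ is then automatic, while injectivity uses that clusters determine their entries and that rigid indecomposables are separated by their index and their image $\Hom_{\cC}(\Sigma^{-1}\Gamma,-)$. For part (2), setting $x_1=\cdots=x_n=1$ collapses the Laurent expression to $\sum_e \chi(\mathrm{Gr}_e(M))\prod_i y_i^{e_i}$ with $M:=\Hom_{\cC}(\Sigma^{-1}\Gamma,X)$; the maximal $y_i$-degree in this $F$-polynomial is $\dim_K M_i$, attained uniquely by the summand $e=\udim M$, where $\mathrm{Gr}_{\udim M}(M)=\{M\}$ contributes $\chi=1$ and no larger submodule exists. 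Hence $f_i(\sC_{\Gamma}(X))=(\udim M)_i$, which is precisely the claim.
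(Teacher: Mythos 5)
First, note that the paper does not prove this theorem at all: it is quoted as background, with part (1) attributed to \cite[Theorem 4.1]{Pl11c} and \cite[Corollary 3.5]{CKLP13} and part (2) to \cite[Proposition 3.1]{DWZ10} and \cite[Subsection 3.3]{Pl11a}. Your sketch is therefore an attempted re-derivation of those references, and it does follow their route (Caldero--Chapoton--Palu characters, indices, the multiplication formula). But as a proof it has real gaps. (a) The displayed character is not the correct one with principal coefficients: the coefficient variables must enter through $\hat{y}_i=y_i\prod_j x_j^{b_{ji}}$ (equivalently through the coindex), not as bare $y_i$'s. With your formula $\sC_{\Gamma}$ would not satisfy the exchange relation $x_kx_k'=\prod x\prod y+\prod x\prod y$ of $\cA(Q)$, so the claimed identity ``$\sC_{\Gamma}(X)\sC_{\Gamma}(X^{\ast})=\sC_{\Gamma}(B)+\sC_{\Gamma}(B')$ turns categorical exchange into cluster exchange'' does not literally go through; you silently use the corrected formula later when you specialize $x_i=1$. (b) The multiplication formula, which you yourself identify as the heart of the matter, is only gestured at. In the generality of this paper $W$ is an arbitrary non-degenerate potential, $J$ may be infinite-dimensional and $\cC$ Hom-infinite, so Palu's theorem does not apply directly: one needs Plamondon's subcategory $\cD$ of presentable objects, finite-dimensionality of $\Hom_{\cC}(\Sigma^{-1}\Gamma,X)$ for reachable rigid $X$, well-definedness of the quiver Grassmannians and their Euler characteristics, and compatibility of the Keller--Yang mutation of $\Gamma$ (which is how $\mu_k\Gamma$ is defined in this paper) with the exchange triangles you use. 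None of this is routine; it is exactly the content of \cite{Pl11a,Pl11c}. (c) Injectivity is not ``automatic'': that reachable rigid objects are separated by their indices (equivalently that distinct indecomposables give distinct cluster variables, and that $\sC_{\Gamma}$ is well defined independently of the mutation path) is itself a main theorem of \cite{Pl11c} and \cite{CKLP13}, not a consequence you may quietly invoke.

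Part (2) is essentially fine \emph{granted} the $F$-polynomial formula $F(y)=\sum_e\chi(\mathrm{Gr}_e(M))y^e$ with $M=\Hom_{\cC}(\Sigma^{-1}\Gamma,X)$ finite-dimensional; the observation that the monomial $y^{\udim M}$ occurs with coefficient $\chi(\mathrm{Gr}_{\udim M}(M))=1$, while every submodule satisfies $e\le\udim M$ componentwise, does give $f_i=\dim_K M_i$ (your word ``uniquely'' is unnecessary: what matters is only that this top coefficient is nonzero, so no cancellation can lower the $y_i$-degree). But that $F$-polynomial formula is precisely the cited result of \cite{DWZ10,Pl11a}, so in effect your argument for (2) assumes the statement being proved. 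In summary, the outline matches the literature's strategy, but the two genuinely hard inputs --- the multiplication formula in the Hom-infinite setting and the index/injectivity theorem --- are asserted rather than proved, and the character formula as written needs the $\hat{y}$ correction.
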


Proposition \ref{prop:eq mod} and Theorem \ref{thm:bijection-sC} give the map in the introduction
\begin{equation*}\label{eq:bijection-sM}
\sM=\sM_{Q,W}:=\Hom_{\cC}(\Sigma^{-1}\Gamma,\sC_{\Gamma}^{-1}(-)) : \clv Q \setminus \{\text{initial cluster variables}\} \rightarrow \mod J,
\end{equation*}
and $f(x)=\udim\sM(x)$ for $x\in\clv Q \setminus \{\text{initial cluster variables}\}$. This implies the following.

\begin{corollary}\label{cor:d=dim d=f}
The quiver $Q$ satisfies \eqref{eq:d=dim} if and only if it satisfies \eqref{eq:d=f}.
\end{corollary}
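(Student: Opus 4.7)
The plan is essentially to observe that the corollary is an immediate consequence of Theorem \ref{thm:bijection-sC}(2), which already supplies the identification $f(x)=\udim\sM(x)$ used implicitly in the paragraph preceding the corollary. So the proof is less a new argument than an unpacking of definitions.

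First I would recall the setup: for a non-initial cluster variable $x\in\clv Q$, Theorem \ref{thm:bijection-sC}(1) produces a unique object $X:=\sC_\Gamma^{-1}(x)\in\irigid^\Gamma\cC$, and the module $\sM(x)$ is defined as $\Hom_\cC(\Sigma^{-1}\Gamma,X)$ via Proposition \ref{prop:eq mod}. Then Theorem \ref{thm:bijection-sC}(2) asserts
\[
f(x)=f(\sC_\Gamma(X))=\udim\Hom_\cC(\Sigma^{-1}\Gamma,X)=\udim\sM(x).
\]
Hence the equality $f(x)=\udim\sM(x)$ holds for every non-initial cluster variable $x$, and in particular both $f(x)$ and $\udim\sM(x)$ are independent of the choice of the non-degenerate potential $W$.

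Next, comparing the two properties vertex by vertex: \eqref{eq:d=dim} asserts $d(x)=\udim\sM(x)$ for every non-initial $x$, while \eqref{eq:d=f} asserts $d(x)=f(x)$ for every non-initial $x$. Substituting the identity $f(x)=\udim\sM(x)$ just obtained into either condition yields the other, so the two properties are logically equivalent. This finishes the proof.

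The only subtle point worth flagging is making sure the quantification matches: \eqref{eq:d=f} is stated over all $x\in\clv Q\setminus\{\text{initial cluster variables}\}$, and the map $\sM$ in the definition preceding the corollary is defined on exactly the same set, so the quantifiers line up and no boundary case intervenes. There is no real obstacle; the content of the corollary is entirely absorbed into Theorem \ref{thm:bijection-sC}(2).
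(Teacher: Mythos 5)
Your proposal is correct and follows exactly the paper's own (very short) argument: the paper derives the corollary immediately from the identity $f(x)=\udim\sM(x)$, which is precisely Theorem \ref{thm:bijection-sC}(2) combined with the definition of $\sM$ via Proposition \ref{prop:eq mod}, just as you do. Nothing further is needed.
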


On the other hand, if $J$ is finite dimensional, then $\cC$ is a Hom-finite Krull-Schmidt $2$-Calabi-Yau triangulated category and $\Gamma$ is a cluster tilting object \cite[Theorem 2.1]{A09}. From this viewpoint, it is important to check the finite dimensionality of $J$ and one way to do it is given as a consequence of \cite[Lemma 2.17]{Pl11c}.

\begin{proposition}[{\cite[Proposition 8.1]{BDP14}}]\label{prop:fin dim}
If $Q$ admits a reddening sequence, then $J(Q,W')$ is finite dimensional for any non-degenerate potential $W'$ of $Q$.
\end{proposition}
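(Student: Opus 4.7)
The plan is to convert the combinatorial hypothesis (existence of a reddening sequence) into a categorical statement about the generalized cluster category $\cC = \cC_{Q,W'}$, and then invoke \cite[Lemma 2.17]{Pl11c}. The bridging idea is that a reddening sequence forces the shift $\Sigma\Gamma$ of the canonical cluster tilting object to appear among the rigid objects reachable from $\Gamma$ by mutation in $\cC$.

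First, using non-degeneracy of $W'$, I would lift the reddening sequence $\mu = \mu_{k_j}\cdots\mu_{k_1}$ from the level of quivers to the level of quivers with potential. At each step the mutated potential can be chosen non-degenerate, and the Keller-Yang theorem \cite{KY11} recalled in Subsection \ref{subsec:categorification} yields a triangle equivalence of generalized cluster categories at every stage. Chaining these equivalences produces a rigid object $\mu\Gamma \in \rigid^\Gamma\cC$ representing the image of the terminal cluster tilting object inside $\cC$. The defining condition $\mu(\hat Q) = \check Q$ is then used to identify the indecomposable summands of $\mu\Gamma$ with the $\Sigma\Gamma_i$, so that each $\Sigma\Gamma_i$ belongs to $\irigid^\Gamma\cC$.

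Second, I would apply \cite[Lemma 2.17]{Pl11c} to the objects of $\irigid^\Gamma\cC$ obtained in the previous step, in particular to each $\Sigma\Gamma_i$. This lemma controls finite dimensionality of the Hom-spaces in $\cC$ between $\Sigma^{-1}\Gamma$ and any rigid object reachable from $\Gamma$, and, once $\Sigma\Gamma$ is known to be reachable, it propagates to finite dimensionality of $\Hom_\cC(\Gamma_i,\Gamma_j)$ for all $i,j \in Q_0$. Combining with the identification $\End_\cC(\Gamma)^{\rm op} \simeq J(Q,W')$ from \eqref{eq:End}, this forces $J(Q,W')$ to be finite dimensional.

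The main obstacle is the first step, namely the identification $\mu\Gamma \simeq \Sigma\Gamma$ in $\cC$. The frozen vertices of the ice quiver encode the indices (equivalently, $g$-vectors) of reachable rigid objects, and the reddening condition $\mu(\hat Q) = \check Q$ says exactly that every frozen arrow has been reversed by $\mu$. Translating this combinatorial sign change into the categorical statement that $\mu\Gamma$ is the shift of $\Gamma$ requires a careful dictionary between the shift functor $\Sigma$ and the negation of indices, and this is the technical heart of the argument; the remaining steps are then formal consequences of Plamondon's framework.
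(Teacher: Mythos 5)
The paper offers no proof of Proposition \ref{prop:fin dim}: it is quoted from \cite[Proposition 8.1]{BDP14}, with the remark just above it that the statement is a consequence of \cite[Lemma 2.17]{Pl11c}. Your outline --- lift the reddening sequence to mutations of $(Q,W')$, identify $\mu\Gamma$ with $\Sigma\Gamma$ so that $\Sigma\Gamma$ is reachable, then apply Plamondon's finiteness lemma together with $\End_{\cC}(\Gamma)^{\rm op}\simeq J(Q,W')$ --- is exactly that argument, and the ``dictionary'' you defer is the standard fact that reachable rigid objects are determined by their indices together with the $c$-vector/$g$-vector duality, as used in \cite{BDP14}.
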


\section{Cluster algebras from triangulated surfaces}\label{sec:CA from TS}

\subsection{Cluster algebras from triangulated surfaces}\label{subsec:CA TS}

Let $\cS$ be a marked surface. To $T\in\bT_{\cS}$, we associate a quiver $\bar{Q}_T$ with $(\bar{Q}_T)_0=T$ and whose arrows correspond with angles between tagged arcs of $T$ as in Table \ref{table:puzzle pieces to quivers}. We obtain a quiver $Q_T$ without loops nor $2$-cycles from $\bar{Q}_T$ by removing $2$-cycles. This construction commutes with flips and mutations, that is, $\mu_{\gamma}Q_T=Q_{\mu_{\gamma}T}$ for $\gamma\in T$ \cite[Proposition 4.8 and Lemma 9.7]{FoST08}. This implies that a quiver $Q$ is obtained from $Q_T$ by a sequence of mutations if and only if $Q=Q_{T'}$ for some $T'\in\bT_{\cS}^T$.
\renewcommand{\arraystretch}{2}
{\begin{table}[ht]
\begin{tabular}{c|c|c|c|c}
$\triangle$
&
\begin{tikzpicture}[baseline=0mm]
 \coordinate(l)at(-150:1); \coordinate(r)at(-30:1); \coordinate(u)at(90:1);
 \draw(u)--node[left]{$1$}(l)--node[below]{$3$}(r)--node[right]{$2$}(u);
 \fill(u)circle(0.07); \fill(l)circle(0.07); \fill(r)circle(0.07);
\end{tikzpicture}
&
\begin{tikzpicture}[baseline=-2mm]
 \coordinate(c)at(0,0); \coordinate(u)at(0,1); \coordinate(d)at(0,-1);
 \draw(d)to[out=180,in=180]node[left]{$1$}(u);
 \draw(d)to[out=0,in=0]node[right]{$2$}(u);
 \draw(d)to[out=150,in=-150]node[fill=white,inner sep=1]{$3$}(c);
 \draw(d)to[out=30,in=-30]node[pos=0.8]{\rotatebox{40}{\footnotesize $\bowtie$}}node[fill=white,inner sep=1]{$4$}(c);
 \fill(c)circle(0.07); \fill(u)circle(0.07); \fill(d)circle(0.07);
\end{tikzpicture}
&
\begin{tikzpicture}[baseline=-2mm]
 \coordinate(l)at(-0.5,0); \coordinate(r)at(0.5,0); \coordinate(d)at(0,-1);
 \draw(0,0)circle(1); \node at(0,0.7){$1$};
 \draw(d)to[out=170,in=-130]node[fill=white,inner sep=1]{$2$}(l);
 \draw(d)to[out=95,in=0]node[left,inner sep=1,pos=0.4]{$3$}node[pos=0.8]{\rotatebox{60}{\footnotesize $\bowtie$}}(l);
 \draw(d)to[out=85,in=180]node[right,inner sep=1,pos=0.4]{$4$}(r);
 \draw(d)to[out=10,in=-50] node[pos=0.8]{\rotatebox{10}{\footnotesize $\bowtie$}}node[fill=white,inner sep=1]{$5$}(r);
 \fill(l)circle(0.07); \fill(r)circle(0.07); \fill(d)circle(0.07);
\end{tikzpicture}
&
\begin{tikzpicture}[baseline=1mm]
 \coordinate(c)at(0,0); \coordinate(u)at(90:1); \coordinate(r)at(-30:1); \coordinate(l)at(210:1);
 \draw(c)to[out=60,in=120,relative]node[fill=white,inner sep=1]{$1$}(u);
 \draw(c)to[out=-60,in=-120,relative]node[pos=0.8]{\rotatebox{40}{\footnotesize $\bowtie$}}node[fill=white,inner sep=1]{$2$}(u);
 \draw(c)to[out=60,in=120,relative]node[fill=white,inner sep=1]{$3$}(l);
 \draw(c)to[out=-60,in=-120,relative] node[pos=0.8]{\rotatebox{160}{\footnotesize $\bowtie$}}node[fill=white,inner sep=1]{$4$}(l);
 \draw(c)to[out=60,in=120,relative]node[fill=white,inner sep=1]{$5$}(r);
 \draw(c)to[out=-60,in=-120,relative]node[pos=0.8]{\rotatebox{-80}{\footnotesize $\bowtie$}}node[fill=white,inner sep=1]{$6$}(r);
 \fill(c)circle(0.07); \fill(u)circle(0.07); \fill(l)circle(0.07); \fill(r)circle(0.07);
\end{tikzpicture}
\\\hline
$Q_{\triangle}$
&
\begin{tikzpicture}[baseline=-3mm,scale=0.8]
 \node(1)at(150:1){$1$}; \node(2)at(30:1){$2$}; \node(3)at(-90:1){$3$};
 \draw[->](3)--(2); \draw[->](2)--(1); \draw[->](1)--(3);
\end{tikzpicture}
&
\begin{tikzpicture}[baseline=-2mm]
 \node(1)at(-0.7,0.5){$1$}; \node(2)at(0.7,0.5){$2$}; \node(3)at(0,-0.2){$3$}; \node(4)at(0,-0.8){$4$};
 \draw[->](3)--(2); \draw[->](2)--(1); \draw[->](1)--(3); \draw[->](4)--(2); \draw[->](1)--(4);
\end{tikzpicture}
&
\begin{tikzpicture}[baseline=-1mm]
 \node(1)at(0,0.8){$1$}; \node(2)at(-170:1){$2$}; \node(3)at(-130:1){$3$};
 \node(4)at(-10:1){$4$}; \node(5)at(-50:1){$5$};
 \draw[->] (1)--(2); \draw[->] (2)--(4); \draw[->] (4)--(1);
 \draw[->] (1)--(3); \draw[->] (3)--(4);
 \draw[->] (2)--(5); \draw[->] (5)--(1); \draw[->] (3)--(5);
\end{tikzpicture}
&
\begin{tikzpicture}[baseline=0mm]
 \node(1)at(90:0.5){$1$}; \node(2)at(90:1.1){$2$}; \node(3)at(-30:0.5){$5$};
 \node(4)at(-30:1.1){$6$}; \node(5)at(210:0.5){$3$}; \node(6)at(210:1.1){$4$};
 \draw[->](1)--(3); \draw[->](3)--(5); \draw[->](5)--(1);
 \draw[->] (2) to [out=40,in=140,relative] (4); \draw[->] (4) to [out=40,in=140,relative] (6); \draw[->] (6) to [out=40,in=140,relative] (2);
 \draw[->] (2) to [out=40,in=140,relative] (3); \draw[->] (4) to [out=40,in=140,relative] (5); \draw[->] (6) to [out=40,in=140,relative] (1);
 \draw[->] (1) to [out=40,in=140,relative] (4); \draw[->] (3) to [out=40,in=140,relative] (6); \draw[->] (5) to [out=40,in=140,relative] (2);
\end{tikzpicture}
\end{tabular}\vspace{3mm}
\caption{The quiver $Q_{\triangle}$ associated with each puzzle piece $\triangle$}\label{table:puzzle pieces to quivers}
\end{table}}

\begin{example}\label{ex:quivers from sS}
For marked surfaces in Theorem \ref{thm:d=dim cS}(2) (or Theorem \ref{thm:CIV cS}(2)), it is easy to see the associated quivers as in Table \ref{table:examples of quivers}. In particular, $\cS$ is a polygon without puncture (resp., with exactly one puncture) if and only if there is a $T\in\bT_{\cS}$ such that $Q_T$ is a Dynkin quiver of type $A$ (resp., $D$). Moreover, $\cS$ is an annulus with exactly two marked points (resp., a torus with exactly one puncture) if and only if all associated quivers are Kronecker (resp., Markov) quivers. These immediately give Corollary \ref{cor:analogue2} from Theorem \ref{thm:d=dim cS}(2).
\renewcommand{\arraystretch}{2}
{\begin{table}[ht]
\begin{tabular}{c|c|c|c|c}
$T$
&
\begin{tikzpicture}[baseline=0mm]
 \coordinate(c)at(0,0); \coordinate(u)at(0,1); \coordinate(d)at(0,-1);
 \coordinate(ru)at(30:1); \coordinate(rd)at(-30:1); \coordinate(lu)at(150:1); \coordinate(ld)at(-150:1);
 \draw(c)circle(10mm);
 \draw[blue](u)--node[fill=white,inner sep=1]{$1$}(ld) (u)--node[fill=white,inner sep=1]{$2$}(d) (u)--node[fill=white,inner sep=1]{$3$}(rd);
 \fill(u)circle(0.07); \fill(d)circle(0.07); \fill(ru)circle(0.07); \fill(rd)circle(0.07); \fill(lu)circle(0.07); \fill(ld)circle(0.07);
 \node at(0,-1.1){};
\end{tikzpicture}
&
\begin{tikzpicture}[baseline=0mm]
 \coordinate(c)at(0,0); \coordinate(p)at(0,-0.3);
 \coordinate(ru)at(45:1); \coordinate(rd)at(-45:1); \coordinate(lu)at(135:1); \coordinate(ld)at(-135:1);
 \draw(c)circle(10mm);
 \draw[blue](ld)..controls(-1,0.2)and(-0.2,1)..node[fill=white,inner sep=1]{$1$}(ru);
 \draw[blue](ld)..controls(-0.6,0.4)and(0.6,0.4)..node[fill=white,inner sep=1]{$2$}(rd);
 \draw[blue](ld)--node[fill=white,inner sep=1]{$3$}(p)--node[fill=white,inner sep=1]{$4$}(rd);
 \fill(p)circle(0.07); \fill(ru)circle(0.07); \fill(rd)circle(0.07); \fill(lu)circle(0.07); \fill(ld)circle(0.07);
\end{tikzpicture}
&
\begin{tikzpicture}[baseline=0mm]
 \coordinate(c)at(0,0); \coordinate(cu)at(0,0.2); \coordinate(d)at(0,-1);
 \draw[pattern=north east lines](c)circle(2mm); \draw(c)circle(10mm);
 \draw[blue](d)..controls(-1,0)and(-0.3,0.6)..node[fill=white,inner sep=2,pos=0.3]{$1$}(cu);
 \draw[blue](d)..controls(1,0)and(0.3,0.6)..node[fill=white,inner sep=2,pos=0.3]{$2$}(cu);
 \fill(d)circle(0.07); \fill(cu)circle(0.07);
\end{tikzpicture}
&
\begin{tikzpicture}[baseline=0mm]
 \coordinate(p)at(0,-0.4);
 \draw(-1.3,0)to[out=90,in=90](1.3,0); \draw(-1.3,0)to[out=-90,in=-90](1.3,0);
 \draw(-0.5,0.1)to[out=-30,in=-150](0.5,0.1); \draw(-0.3,0)to[out=30,in=150](0.3,0);
 \draw[blue](p)..controls(1.3,-0.3)and(1.3,0.6)..(0,0.6);
 \draw[blue](p)..controls(-1.3,-0.3)and(-1.3,0.6)..node[fill=white,inner sep=1,pos=0.3]{$1$}(0,0.6);
 \draw[blue](p)to[out=90,in=180](0.15,-0.04); \draw[blue](p)to[out=-90,in=180](0.15,-0.75);
 \draw[blue,dotted](0.3,-0.4)to[out=90,in=0](0.15,-0.04); \draw[blue,dotted](0.3,-0.4)to[out=-90,in=0](0.15,-0.75);
 \draw[blue](p)..controls(1.3,0.2)and(0.3,0.6)..node[fill=white,inner sep=1,pos=0.6]{$3$}(-0.3,0);
 \draw[blue](p)..controls(-0.3,-0.6)and(-0.4,-0.7)..(-0.5,-0.71);
 \draw[blue,dotted](-0.3,0)..controls(-0.5,-0.3)and(-0.6,-0.7)..(-0.5,-0.71); \node[blue]at(0.2,-0.95){$2$};
 \fill(p)circle(0.07);
\end{tikzpicture}
\\\hline
$Q_T$
&
\begin{tikzpicture}[baseline=-1mm,scale=0.8]
 \node(1)at(-1,0){$1$}; \node(2)at(0,0){$2$}; \node(3)at(1,0){$3$};
 \draw[->](3)--(2); \draw[->](2)--(1);
\end{tikzpicture}
&
\begin{tikzpicture}[baseline=0mm]
 \node(1)at(0,0.9){$1$}; \node(2)at(0,0){$2$}; \node(3)at(-0.7,-0.7){$3$}; \node(4)at(0.7,-0.7){$4$};
 \draw[->](1)--(2); \draw[->](2)--(3); \draw[->](4)--(2);
 \node at(0,1.1){};
\end{tikzpicture}
&
\begin{tikzpicture}[baseline=-1mm]
 \node(1)at(-0.5,0){$1$}; \node(2)at(0.5,0){$2$};
 \draw[transform canvas={yshift=2pt},->](1)--(2); \draw[transform canvas={yshift=-2pt},->](1)--(2);
\end{tikzpicture}
&
\begin{tikzpicture}[baseline=1mm]
 \node(1)at(210:1){$1$}; \node(2)at(-30:1){$2$}; \node(3)at(90:1){$3$};
 \draw[transform canvas={yshift=2pt},->](1)--(2); \draw[transform canvas={yshift=-2pt},->](1)--(2);
 \draw[transform canvas={xshift=-1.8pt,yshift=-1.3pt},->](2)--(3); \draw[transform canvas={xshift=1.8pt,yshift=1.3pt},->](2)--(3);
 \draw[transform canvas={xshift=-1.8pt,yshift=1.3pt},->](3)--(1); \draw[transform canvas={xshift=1.8pt,yshift=-1.3pt},->](3)--(1);
\end{tikzpicture}
\end{tabular}\vspace{3mm}
\caption{The quiver $Q_T$ associated with a tagged triangulation $T$ of each marked surface in Theorem \ref{thm:d=dim cS}(2)}
\label{table:examples of quivers}
\end{table}}
\end{example}

The associated cluster algebra $\cA(Q_T)$ has the following properties.

\begin{theorem}\label{thm:bijection-x}
Let $T\in\bT_{\cS}$.
\begin{enumerate}
\item\cite[Theorem 7.11]{FoST08}\cite[Theorem 6.1]{FT18} There is a bijection
\[
x_T : \bA_{\cS}^T \leftrightarrow \clv Q_T.
\]
Moreover, it induces a bijection
\[
x_T : \bT_{\cS}^T \leftrightarrow \clus Q_T
\]
which sends $T$ to the initial cluster of $\cA(Q_T)$ and commutes with flips and mutations.
\item\cite[(7.7)]{FZ07}\cite[Lemma 9.20]{FoST08} For $\gamma \in \bA_{\cS}^T$, we have
\[
d(x_T(\gamma))=(T | \gamma).
\]
\item\cite[Theorem 7]{Y19}\label{thm:f=Int} For $\gamma \in \bA_{\cS}^T$, we have
\[
f(x_T(\gamma))=\Int(T,\gamma).
\]
\item\cite[Theorem 1.1]{Mi17}\label{thm:mgs} If $\cS$ is not a closed surface with exactly one puncture, then $Q_T$ admits a reddening sequence.
\end{enumerate}
\end{theorem}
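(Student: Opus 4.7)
The plan is to treat the four parts of Theorem \ref{thm:bijection-x} separately, since they address different objects (the cluster variable/arc bijection, the denominator formula, the $f$-vector formula, and the reddening sequence) and naturally rely on different techniques. The four parts are logically linked: part (1) provides the indexing that makes parts (2), (3), and (4) meaningful, so I would establish it first.

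For parts (1) and (2), the strategy is induction on the length of sequences of flips/mutations. The crucial structural input, already recorded in Section \ref{sec:CA from TS}, is the identity $Q_{\mu_\gamma T} = \mu_\gamma Q_T$. This allows one to \emph{define} $x_T$ by sending $\gamma\in T$ to the initial cluster variable labelled by $\gamma$, and then propagating along sequences of flips so that $x_{\mu_\gamma T} = \mu_\gamma x_T$. The nontrivial content is that this assignment is well-defined on $\bA_{\cS}^T$ and $\bT_{\cS}^T$, i.e.\ any two flip sequences connecting the same tagged triangulations give the same cluster — this reduces to checking the relations among flips that generate all relations (square, pentagon, and the relations coming from conjugate arcs). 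Surjectivity onto $\clv Q_T$ follows because every cluster variable is obtained from the initial one by mutations, which are matched by flips. For part (2), once (1) is in place, one verifies that the FST-intersection numbers obey the same mutation recursion as denominator vectors. Both satisfy a tropicalized exchange relation: on the algebra side $d_i(x_k') = \max\bigl(\sum_{j\to k} d_i(x_j),\sum_{j\leftarrow k} d_i(x_j)\bigr) - d_i(x_k)$, and on the geometric side one directly counts how a flip modifies the transversal intersection pattern of arcs.

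For part (3), the cleanest route is via the categorification recalled in Subsection \ref{subsec:categorification}. Theorem \ref{thm:bijection-sC}(2) identifies $f(x_T(\gamma))$ with $\udim \Hom_\cC(\Sigma^{-1}\Gamma, \sC_\Gamma^{-1}(x_T(\gamma)))$, so the claim becomes the statement that, under the geometric model of the generalized cluster category $\cC=\cC_{Q_T,W}$ in which indecomposable rigid objects are parameterised by tagged arcs, the dimension of $\Hom_\cC(\Sigma^{-1}M_\delta, M_\gamma)$ equals $\Int(\delta,\gamma)$. This reduces the problem to an essentially local count, where the three contributions $A_{\gamma,\delta}$, $C'_{\gamma,\delta}$, and $D'_{\gamma,\delta}$ correspond respectively to interior crossings, conjugate pairs, and puncture tag mismatches, each contributing to morphisms in $\cC$. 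Alternatively, one may verify the $F$-polynomial mutation recursion directly for $\Int(T,\gamma)$ by a case analysis on how flips alter QZ-intersection numbers, which is exactly what is carried out in the cited \cite{Y19}.

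The main obstacle lies in part (4): producing an explicit reddening sequence for $Q_T$. The strategy, following \cite{Mi17}, is to construct a sequence of flips whose effect on the framed quiver $\hat Q_T$ is to turn it into the coframed quiver $\check Q_T$. For marked surfaces with boundary and no punctures this is already delicate — one successively flips interior arcs in an order that pushes the ``green'' frozen direction monotonically to the boundary — and for punctured surfaces with boundary one must combine this with a careful treatment of arcs incident to each puncture, using the conjugate-arc mechanism to toggle tags. The excluded case of a closed surface with exactly one puncture is a genuine obstruction (not a gap in the argument), reflecting the fact that the associated Jacobian algebras can be infinite-dimensional; any attempted reddening sequence is blocked by this. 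Thus the proof decomposes into a uniform surface-by-surface construction, with most of the technical weight in the combinatorics of flip sequences near punctures.
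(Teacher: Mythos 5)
The paper does not prove Theorem \ref{thm:bijection-x} at all: every part is imported from the literature ((1) from \cite{FoST08,FT18}, (2) from \cite{FZ07,FoST08}, (3) from \cite{Y19}, (4) from \cite{Mi17}), so there is no internal argument to compare with, and your text must be judged as a reconstruction of those external proofs. Read that way, it is a sensible map of where the difficulties lie, but it is a survey of strategies rather than a proof: several steps you treat as routine are exactly the content of the cited theorems.

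The most serious gap is in part (1). Defining $x_T$ on $T$ and propagating along flips gives a map only once well-definedness is known, and you reduce this to ``the relations among flips that generate all relations (square, pentagon, conjugate arcs)''; but that presentation of the tagged flip groupoid is itself a substantial theorem which you neither prove nor quote, and you also do not check that seed mutation satisfies the corresponding identities. Moreover, well-definedness would only give a surjection $\bA_{\cS}^T\to\clv Q_T$; injectivity --- that distinct tagged arcs yield distinct cluster variables --- is never addressed and is not formal. The cited proofs go the other way: each tagged arc is sent to an explicit function (a lambda length on a decorated Teichm\"{u}ller space in \cite{FT18}), so well-definedness and injectivity are built into the construction. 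In part (2) you assert that denominator vectors obey the tropical exchange recursion; this requires ruling out cancellation in the exchange relation (that is what the reference to \cite{FZ07} carries), and the claim that $(T\,|\,\gamma)$ satisfies the same recursion under flips is precisely the nontrivial case analysis of \cite[Lemma 9.20]{FoST08}, which you do not carry out. Part (3) via Theorem \ref{thm:bijection-sC}(2) is a legitimate route, but it needs the actual theorem of \cite{QZ17} that $\dim\Hom_{\cC}(\sX_T(\delta),\Sigma\sX_T(\gamma))=\Int(\delta,\gamma)$ for the specific potential they use, compatibility of their arc parametrisation with $\sC_{\Gamma}$, and independence of $f$-vectors from the choice of $W$; calling it an ``essentially local count'' understates this. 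Finally, for part (4) your outline restates the goal (turning $\hat{Q}_T$ into $\check{Q}_T$) without producing any sequence; the explicit triangulation-by-triangulation construction is the whole content of \cite{Mi17}. Your remark that the excluded closed once-punctured case is a genuine obstruction is correct and consistent with the paper (combine Proposition \ref{prop:fin dim} with the infinite-dimensional Jacobian algebras of \cite{GLS16}), but it does not substitute for the missing construction in the remaining cases.
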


By Theorem \ref{thm:bijection-x}, we can apply the results in Section \ref{sec:cS} to cluster algebras. In particular, we get Theorems \ref{thm:d=dim} and \ref{thm:d=dim cS} from the following corollary.

\begin{corollary}\label{cor:wCIV d=dim}
For $T\in\bT_{\cS}$, it has wCIV if and only if $Q_T$ satisfies \eqref{eq:d=dim}.
\end{corollary}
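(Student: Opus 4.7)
The plan is to chain together the three identifications already in hand: the bijection between tagged arcs (reachable from $T$) and cluster variables, the denominator formula, and the $f$-vector formula. Everything then reduces to Corollary \ref{cor:d=dim d=f}.

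First, I would use Theorem \ref{thm:bijection-x}(1) to set up the bijection $x_T : \bA_\cS^T \leftrightarrow \clv Q_T$. Since $x_T$ sends $T$ to the initial cluster, it restricts to a bijection
\[
x_T : \bA_\cS^T \setminus T \;\longleftrightarrow\; \clv Q_T \setminus \{\text{initial cluster variables}\}.
\]
Thus the condition defining wCIV in Definition \ref{def:CIV}(2) ranges precisely over the preimages of non-initial cluster variables under $x_T$.

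Next, I would substitute the two geometric identifications. By Theorem \ref{thm:bijection-x}(2), $d(x_T(\gamma)) = (T|\gamma)$, and by Theorem \ref{thm:bijection-x}(3), $f(x_T(\gamma)) = \Int(T,\gamma)$, for every $\gamma \in \bA_\cS^T$. Hence for each non-initial cluster variable $x = x_T(\gamma)$ with $\gamma \in \bA_\cS^T\setminus T$, the equality $d(x)=f(x)$ is literally the equality $(T|\gamma)=\Int(T,\gamma)$. Quantifying over all such $\gamma$, we see that $T$ has wCIV if and only if $Q_T$ satisfies \eqref{eq:d=f}.

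Finally, I would invoke Corollary \ref{cor:d=dim d=f}, which states that \eqref{eq:d=f} and \eqref{eq:d=dim} are equivalent for any quiver $Q$ (without loops or $2$-cycles). Applying this to $Q_T$ completes the chain and gives the equivalence between wCIV and \eqref{eq:d=dim}. There is no real obstacle here; the content of the corollary is entirely packaged into the earlier bijections and Theorem \ref{thm:bijection-x}, so the proof is essentially a two-line citation argument.
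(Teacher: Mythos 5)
Your argument is correct and is exactly the paper's proof, which simply cites Corollary \ref{cor:d=dim d=f} together with Theorem \ref{thm:bijection-x}; you have merely unpacked the routine step that the bijection $x_T$ matches $\bA_{\cS}^T\setminus T$ with the non-initial cluster variables so that wCIV translates into \eqref{eq:d=f}. No further comment is needed.
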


\begin{proof}
The assertion follows from Corollary \ref{cor:d=dim d=f} and Theorem \ref{thm:bijection-x}.
\end{proof}

\begin{proof}[Proof of Theorem \ref{thm:d=dim}]
The assertion follows from Theorems \ref{thm:CIV}, \ref{thm:wCIV}, and Corollary \ref{cor:wCIV d=dim}.
\end{proof}

\begin{proof}[Proof of Theorem \ref{thm:d=dim cS}]
The assertions follows from Theorem \ref{thm:CIV cS} and Corollary \ref{cor:wCIV d=dim}.
\end{proof}

\begin{example}
For the tagged triangulation $T$ in Example \ref{ex:acyclic}, $Q_T$ is the quiver in Example \ref{ex:counterexample}. Moreover, the sequence of mutations in Example \ref{ex:counterexample} corresponds with the sequence of flips
\[
\begin{tikzpicture}[baseline=-1mm]
 \coordinate(0)at(0,0); \coordinate(u)at(0,1); \coordinate(c)at(0,0.2); \coordinate(d)at(0,-1);
 \draw[pattern=north east lines](0)circle(2mm); \draw(0)circle(10mm);
 \draw[blue](d)..controls(-0.8,0)and(-0.3,0.5)..node[fill=white,inner sep=2,pos=0.3]{$1$}(c);
 \draw[blue](d)..controls(0.8,0)and(0.3,0.5)..node[fill=white,inner sep=2,pos=0.3]{$2$}(c);
 \draw[blue](d)..controls(-1,-0.5)and(-1,0.6)..(0,0.6);
 \draw[blue](d)..controls(1,-0.5)and(1,0.6)..(0,0.6)node[fill=white,inner sep=2]{$3$};
 \fill(u)circle(0.07); \fill(c)circle (0.07); \fill(d)circle(0.07);
\end{tikzpicture}
 \hspace{3mm}\xrightarrow{\mu_3}\hspace{3mm}
\begin{tikzpicture}[baseline=-1mm]
 \coordinate(0)at(0,0); \coordinate(u)at(0,1); \coordinate(c)at(0,0.2); \coordinate(d)at(0,-1);
 \draw[pattern=north east lines](0)circle(2mm); \draw(0)circle(10mm);
 \draw[blue](d)..controls(-0.8,0)and(-0.3,0.5)..node[fill=white,inner sep=2,pos=0.3]{$1$}(c);
 \draw[blue](d)..controls(0.8,0)and(0.3,0.5)..node[fill=white,inner sep=2,pos=0.3]{$2$}(c);
 \draw[blue](c)--node[fill=white,inner sep=1]{$3'$}(u);
 \fill(u)circle(0.07); \fill(c)circle (0.07); \fill(d)circle(0.07);
\end{tikzpicture}
 \hspace{3mm}\xrightarrow{\mu_2}\hspace{3mm}
\begin{tikzpicture}[baseline=-1mm]
 \coordinate(0)at(0,0); \coordinate(u)at(0,1); \coordinate(c)at(0,0.2); \coordinate(d)at(0,-1);
 \draw[pattern=north east lines](0)circle(2mm); \draw(0)circle(10mm);
 \draw[blue](d)..controls(-0.8,0)and(-0.3,0.5)..node[fill=white,inner sep=2,pos=0.3]{$1$}(c);
 \draw[blue](u)..controls(0.8,0)and(0.3,-0.4)..node[fill=white,inner sep=2,pos=0.4]{$2'$}(0,-0.4);
 \draw[blue](c)..controls(-0.4,0.3)and(-0.4,-0.4)..(0,-0.4);
 \draw[blue](c)--node[fill=white,inner sep=1,pos=0.45]{$3'$}(u);
 \fill(u)circle(0.07); \fill(c)circle (0.07); \fill(d)circle(0.07);
\end{tikzpicture}
 \hspace{3mm}\xrightarrow{\mu_1}\hspace{3mm}
\begin{tikzpicture}[baseline=-1mm]
 \coordinate(0)at(0,0); \coordinate(u)at(0,1); \coordinate(c)at(0,0.2); \coordinate(d)at(0,-1);
 \draw[pattern=north east lines](0)circle(2mm); \draw(0)circle(10mm);
 \draw[blue](u)..controls(-1,0.5)and(-1,-0.65)..(0,-0.65);
 \draw[blue](u)..controls(1,0.5)and(1,-0.65)..(0,-0.65)node[fill=white,inner sep=2]{$1'$};
 \draw[blue](u)..controls(0.8,0)and(0.3,-0.4)..node[fill=white,inner sep=2,pos=0.4]{$2'$}(0,-0.4);
 \draw[blue](c)..controls(-0.4,0.3)and(-0.4,-0.4)..(0,-0.4);
 \draw[blue](c)--node[fill=white,inner sep=1,pos=0.45]{$3'$}(u);
 \fill(u)circle(0.07); \fill(c)circle (0.07); \fill(d)circle(0.07);
\end{tikzpicture}\ .
\]
In particular, we have $\rho(3)=1'$ and $x_T(i')=x_{i}'$ for $i\in\{1,2,3\}$, where $x_{i}'$ is the non-initial cluster variable of $\cA(Q_T)$ appearing in Example \ref{ex:counterexample}. Then we can see $d(x_T(1'))=(T|1')=(1,1,1)\neq(1,1,2)=f(x_T(1'))=\Int(T,1')$.

\end{example}

\subsection{Categorification on triangulated surfaces}\label{subsec:categorification TS}

Let $\cS$ be a marked surface and $T\in\bT_{\cS}$. The following property is given as a consequence of the classification of non-degenerate potentials of $Q_T$ studied in \cite{GG15,GLS16,GLM22,Lab09,Lab16,Lad12} (see also \cite{Lab16survey}). It also follows from Proposition \ref{prop:fin dim} and Theorem \ref{thm:bijection-x}(4).

\begin{theorem}[Finite dimensionality]\label{thm:fin dim}
If $\cS$ is not a closed surface with exactly one puncture, then $J(Q_T,W)$ is finite dimensional for any non-degenerate potential $W$ of $Q_T$.
\end{theorem}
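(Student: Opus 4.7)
The plan is to combine two previously stated results directly. First, I will invoke Theorem~\ref{thm:bijection-x}(4), which asserts that whenever $\cS$ is not a closed surface with exactly one puncture, the quiver $Q_T$ associated with any tagged triangulation $T\in\bT_{\cS}$ admits a reddening sequence. This is precisely the hypothesis of Proposition~\ref{prop:fin dim}, so I can then apply that proposition to conclude that $J(Q_T,W)$ is finite dimensional for every non-degenerate potential $W$ of $Q_T$.

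Concretely, the proof is only two lines: given $\cS$ satisfying the hypothesis and any non-degenerate potential $W$ of $Q_T$, Theorem~\ref{thm:bijection-x}(4) supplies a sequence of mutations $\mu$ with $\mu(\hat{Q}_T)=\check{Q}_T$, and then Proposition~\ref{prop:fin dim} (whose input is the mere existence of such a reddening sequence, independent of the chosen potential) yields finite dimensionality of $J(Q_T,W)$.

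Since the excerpt notes that the result also follows from the classification of non-degenerate potentials in \cite{GG15,GLS16,GLM22,Lab09,Lab16,Lad12}, I will mention this alternative route for completeness but not reproduce it, as the reddening-sequence argument is strictly shorter and self-contained relative to the material already established in the paper. There is no genuine obstacle here: both ingredients are quoted from earlier in the text, and all that is required is to verify that the hypothesis on $\cS$ matches the hypothesis of Theorem~\ref{thm:bijection-x}(4), which it does verbatim.
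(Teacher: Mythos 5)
Your proposal is correct and is essentially the same argument the paper gives: the text accompanying Theorem~\ref{thm:fin dim} explicitly notes that the result follows from Proposition~\ref{prop:fin dim} together with Theorem~\ref{thm:bijection-x}(4), which is exactly your two-line combination (the citation of the classification of non-degenerate potentials is only offered there as an alternative route, just as you treat it).
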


We remark that if $\cS$ is a closed surface with exactly one puncture, then there is a non-degenerate potential $W$ such that $J(Q,W)$ is infinite dimensional \cite[Proof of Proposition 9.13]{GLS16}.

Let $W$ be a non-degenerate potential of $Q_T$. In the rest of this paper, we denote $J=J(Q_T,W)$, $\Gamma=\Gamma_{Q_T,W}$, $\cC=\cC_{Q_T,W}$, $P_{\gamma}=J e_{\gamma}$, and $\Gamma_{\gamma}=\Gamma e_{\gamma}$ for $\gamma \in T=(Q_T)_0$. By Theorems \ref{thm:bijection-sC} and \ref{thm:bijection-x}, we have the bijection
\begin{equation}\label{eq:sX}
\sX_{T}:=\sC_{\Gamma}^{-1}x_{T} : \bA_{\cS} \leftrightarrow \irigid^{\Gamma}\cC
\end{equation}
which sends $\gamma \in T$ to $\Gamma_{\gamma}$ and $\Int(T,\delta)=\udim\Hom_{\cC}(\Sigma^{-1}\Gamma,\sX_{T}(\delta))$. The following theorem was given in \cite{BQ15} under the assumption that $J$ is finite dimensional, but the assumption automatically holds by Theorem \ref{thm:fin dim}.

\begin{theorem}[{\cite[Theorem 3.8]{BQ15}}]\label{thm:rotation=shift}
If $\cS$ is not a closed surface with exactly one puncture, then we have $\sX_{T}(\rho(\gamma))=\Sigma\sX_{T}(\gamma)$ for $\gamma \in \bA_{\cS}$.
\end{theorem}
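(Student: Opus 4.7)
The plan is to reduce the statement directly to \cite[Theorem 3.8]{BQ15}. That theorem establishes exactly the equality $\sX_T(\rho(\gamma)) = \Sigma \sX_T(\gamma)$ for every $\gamma \in \bA_{\cS}$, but under the additional standing assumption that the Jacobian algebra $J = J(Q_T, W)$ is finite dimensional. The only obstruction to importing their result as stated is therefore this finite-dimensionality hypothesis, and this is what Theorem \ref{thm:fin dim} is designed to remove: since $\cS$ is assumed not to be a closed surface with exactly one puncture, every non-degenerate potential $W$ of $Q_T$ yields a finite-dimensional $J(Q_T, W)$. Plugging this into \cite[Theorem 3.8]{BQ15} gives the conclusion.

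Before invoking the cited theorem, I would verify that the conventions match. The bijection $\sX_T$ defined in \eqref{eq:sX} sends each $\gamma \in T$ to the indecomposable summand $\Gamma_\gamma$ of $\Gamma$, commutes with flips and mutations (by combining Theorem \ref{thm:bijection-sC}(1) and Theorem \ref{thm:bijection-x}(1), together with the identity $\mu_\gamma Q_T = Q_{\mu_\gamma T}$ recalled in Subsection \ref{subsec:CA TS}), and the tagged rotation $\rho$ is the one of Definition \ref{def:rotation}, which is the same operation used in \cite{BQ15}. With these identifications, the equality claimed here is literally the one in \cite[Theorem 3.8]{BQ15}.

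For the reader's convenience I would also indicate the shape of the argument in \cite{BQ15}: in the Hom-finite $2$-Calabi--Yau setting guaranteed by finite dimensionality of $J$, $\Gamma$ is a cluster tilting object of $\cC$, so $\cC$ is controlled by its rigid indecomposables; one checks that $\rho$ and $\Sigma$ both act as bijections on $\irigid^\Gamma \cC$ (identified with $\bA_{\cS}$ via $\sX_T$), are compatible with flips/mutations, and agree on an initial tagged triangulation $T$, where $\rho(\gamma)$ and $\Sigma \Gamma_\gamma$ are identified via the fundamental triangle defining $\Gamma_\gamma^\ast$. Transitivity of flips on $\bT_{\cS}$ (Theorem \ref{thm:transitivity of flips}, which holds in our situation since $\cS$ is not a closed surface with exactly one puncture) then propagates the identity from $T$ to all of $\bA_{\cS}$.

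The main and essentially only obstacle is the one I have already addressed: making sure that the finite-dimensionality assumption of \cite{BQ15} is justified here, which is exactly the content of Theorem \ref{thm:fin dim} combined with Theorem \ref{thm:bijection-x}(4) and Proposition \ref{prop:fin dim}. No new combinatorial or homological input beyond this is required.
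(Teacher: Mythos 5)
Your proposal matches the paper's treatment exactly: the paper does not reprove the statement but imports \cite[Theorem 3.8]{BQ15} directly, observing that the finite-dimensionality hypothesis on $J(Q_T,W)$ required there is automatic by Theorem \ref{thm:fin dim} under the hypothesis that $\cS$ is not a closed surface with exactly one puncture. Your additional sketch of the argument inside \cite{BQ15} is fine but not needed; the reduction you describe is precisely the paper's own.
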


We are ready to give equivalences including Theorem \ref{thm:analogue1}.

\begin{theorem}\label{thm:TFAE final}
If $\cS$ is not a closed surface with exactly one puncture, then the following are equivalent:
\begin{enumerate}
\item $Q_T$ satisfies \eqref{eq:d=dim};
\item $\End_{\cC}(\Gamma_{\gamma})\simeq K$ for any $\gamma \in T$;
\item $\End_{J}(P_{\gamma})\simeq K$ for any $\gamma \in T$.
\end{enumerate}
\end{theorem}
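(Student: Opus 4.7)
The equivalence $(2)\Leftrightarrow(3)$ follows immediately from \eqref{eq:End}, so the real content is $(1)\Leftrightarrow(2)$. The plan is to reduce both sides to the single numerical condition $\Int(\gamma,\rho(\gamma))=1$ for every $\gamma\in T$, and then observe that the two reformulations match on the nose.

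On the combinatorial side, Corollary \ref{cor:wCIV d=dim} rephrases (1) as ``$T$ has wCIV''; since $\cS$ is not a closed surface with exactly one puncture, wCIV coincides with CIV by Proposition \ref{prop:once-punctured CIV}, and Theorem \ref{thm:CIV}(3) translates this into the statement $\Int(\gamma,\rho(\gamma))=1$ for every $\gamma\in T$.

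On the categorical side, Theorem \ref{thm:fin dim} (again using the hypothesis on $\cS$) makes $J$ finite dimensional, so by Amiot's construction $\cC$ is Hom-finite and $2$-Calabi-Yau. Combining Theorem \ref{thm:bijection-x}(3) with Theorem \ref{thm:bijection-sC}(2) and the identification $\sX_T(\rho(\gamma))=\Sigma\Gamma_\gamma$ furnished by Theorem \ref{thm:rotation=shift}, the $\gamma$-component of $\Int(T,\rho(\gamma))$ equals
\[
\dim_K\Hom_\cC(\Sigma^{-1}\Gamma_\gamma,\Sigma\Gamma_\gamma)=\dim_K\Hom_\cC(\Gamma_\gamma,\Sigma^2\Gamma_\gamma)=\dim_K\End_\cC(\Gamma_\gamma),
\]
where the last equality is the Serre duality isomorphism $\Hom_\cC(\Gamma_\gamma,\Sigma^2\Gamma_\gamma)\simeq D\End_\cC(\Gamma_\gamma)$. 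Since $\Gamma_\gamma$ is indecomposable and $K\cdot\mathrm{id}$ always sits inside $\End_\cC(\Gamma_\gamma)$, condition (2) is equivalent to $\dim_K\End_\cC(\Gamma_\gamma)=1$, matching the reformulation of (1).

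I do not anticipate a serious obstacle: once the ingredients are lined up---the coincidence of wCIV and CIV, the numerical criterion Theorem \ref{thm:CIV}(3), the identification of $f$-vectors with QZ-intersection vectors, the finite dimensionality of $J$, the rotation/shift compatibility, and the $2$-Calabi-Yauness of $\cC$---the argument is a short chain of substitutions. The only step that genuinely uses the standing hypothesis on $\cS$ is the invocation of Theorems \ref{thm:fin dim} and \ref{thm:rotation=shift}, and the failure of the statement for the once-punctured torus discussed in the introduction shows that this hypothesis really cannot be dropped.
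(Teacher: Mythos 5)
Your argument is correct and follows the same overall strategy as the paper: reduce \eqref{eq:d=dim} to the numerical condition that each $\gamma\in T$ has intersection number $1$ with its rotation (via Corollary \ref{cor:wCIV d=dim} and Theorem \ref{thm:CIV}), then identify that number with $\dim\End_{\cC}(\Gamma_\gamma)$ through the categorification, with $(2)\Leftrightarrow(3)$ coming from \eqref{eq:End}. The one substantive difference is in the final computation: you work with $\rho(\gamma)$ (Theorem \ref{thm:CIV}(3)) and therefore land on $\Hom_{\cC}(\Gamma_\gamma,\Sigma^2\Gamma_\gamma)$, which you convert to $\End_{\cC}(\Gamma_\gamma)$ by Serre duality, so you must additionally invoke Theorem \ref{thm:fin dim} and Amiot's theorem to know $\cC$ is Hom-finite $2$-Calabi--Yau. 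The paper instead uses $\rho^{-1}(\gamma)$ (Theorem \ref{thm:CIV}(4)), so that Theorem \ref{thm:rotation=shift} gives $\sX_T(\rho^{-1}(\gamma))=\Sigma^{-1}\Gamma_\gamma$ and the intersection number is literally $\dim\Hom_{\cC}(\Sigma^{-1}\Gamma_\gamma,\Sigma^{-1}\Gamma_\gamma)=\dim\End_{\cC}(\Gamma_\gamma)$, with no appeal to the $2$-CY structure; your route is valid but carries this extra (avoidable) input. One small citation slip: the equivalence of CIV and wCIV when $\cS$ is not a closed surface with exactly one puncture does not come from Proposition \ref{prop:once-punctured CIV} (which treats precisely the excluded once-punctured closed case); it follows from Theorem \ref{thm:transitivity of flips} (equivalently, it is recorded in Theorem \ref{thm:wCIV}), since then $\bA_{\cS}^T=\bA_{\cS}$.
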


\begin{proof}
Under the assumption, CIV and wCIV are equivalent. Then (1) is equivalent to $\Int(\gamma,\rho^{-1}(\gamma))=1$ for any $\gamma\in T$ by Theorem \ref{thm:CIV} and Corollary \ref{cor:wCIV d=dim}. By \eqref{eq:sX} and Theorem \ref{thm:rotation=shift}, we have
{\setlength\arraycolsep{0.5mm}
\begin{eqnarray*}
\Int(\gamma,\rho^{-1}(\gamma))
&=&\dim\Hom_{\cC}(\Sigma^{-1}\Gamma_{\gamma},\sX_T(\rho^{-1}(\gamma)))\\
&=&\dim\Hom_{\cC}(\Sigma^{-1}\Gamma_{\gamma},\Sigma^{-1}\sX_T(\gamma))\\
&=&\dim\End_{\cC}(\Gamma_{\gamma}).
\end{eqnarray*}}\hspace{-1mm}
Therefore, (1) and (2) are equivalent. The equivalence between (2) and (3) follows from \eqref{eq:End}.
\end{proof}

We also consider Corollary \ref{cor:analogue2} from the viewpoint of cluster categories. The following theorem was also given in \cite{Y20} under the assumption that $J$ is finite dimensional, but the assumption automatically holds by Theorem \ref{thm:fin dim}.

\begin{theorem}[{\cite[Theorem 1.4]{Y20}}]\label{thm:all rigid}
If $\cS$ is not a closed surface with exactly one puncture, then all indecomposable rigid objects in $\cC$ are contained in $\irigid^{\Gamma}\cC$.
\end{theorem}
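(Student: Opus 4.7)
The plan is to show that the injective map $\sX_T\colon\bA_\cS\to\irigid^\Gamma\cC$ (which is a bijection onto $\irigid^\Gamma\cC$ via Theorems \ref{thm:transitivity of flips}, \ref{thm:bijection-sC}(1) and \ref{thm:bijection-x}(1), since $\bA_\cS^T=\bA_\cS$ under the hypothesis that $\cS$ is not a closed surface with exactly one puncture) is actually a bijection onto the set of isomorphism classes of all indecomposable rigid objects of $\cC$.

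First I would set up the ambient structure: by Theorem \ref{thm:fin dim}, $J$ is finite dimensional, so $\cC$ is a Hom-finite $2$-Calabi-Yau triangulated category with cluster tilting object $\Gamma$. This puts us in the range of Plamondon's theory of indices: every indecomposable rigid $X\in\cC$ has a well-defined index $\ind_\Gamma(X)\in K_0(\add\Gamma)\cong\bZ^T$, and this index distinguishes basic rigid objects up to isomorphism. Moreover, indecomposable rigid objects that extend to a common basic cluster tilting object have indices spanning an $|T|$-dimensional simplicial cone, and these simplicial cones fit together to form a fan in $\bR^T$.

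Second, the essential extra input for surfaces is Theorem \ref{thm:bijection-x}(4): $Q_T$ admits a reddening sequence. By the general principle (formulated in the scattering-diagram framework of Gross–Hacking–Keel–Kontsevich, or in categorical form by Keller–Nicolás and Plamondon) that a reddening sequence forces the $g$-vector fan associated with $\rigid^\Gamma\cC$ to be a \emph{complete} fan in $\bR^T$, every vector in $\bZ^T$ that can appear as the index of an indecomposable rigid object must lie on a ray of the reachable $g$-vector fan. Each ray of this fan is realized by $\ind_\Gamma(\sX_T(\gamma))$ for some $\gamma\in\bA_\cS$, so by index injectivity $X\simeq\sX_T(\gamma)\in\irigid^\Gamma\cC$.

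The main obstacle is identifying $\ind_\Gamma(X)$ with a ray of the fan rather than a vector in the interior of some higher-dimensional cone. I would handle this by completing $X$ to a basic cluster tilting object $U=X\oplus Y$ (possible in any Hom-finite $2$-Calabi-Yau category by Iyama–Yoshino), so that $\ind_\Gamma(X)$ becomes one of the extremal rays of the simplicial cone generated by the indices of the summands of $U$; completeness of the $g$-vector fan then guarantees this ray is among those coming from $\bA_\cS$. The hypothesis excluding the once-punctured closed case is essential exactly here, since the reddening-sequence input from Theorem \ref{thm:bijection-x}(4) is no longer available in that geometry, as the Labardini-Fragoso potential on the once-punctured torus illustrates.
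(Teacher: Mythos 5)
Your argument breaks at its central step: the claim that the existence of a reddening sequence forces the reachable $g$-vector fan of $\rigid^{\Gamma}\cC$ to be complete in $\bR^{T}$ is not a valid general principle. Every acyclic quiver admits a maximal green (hence reddening) sequence, yet already for the generalized Kronecker quiver with two vertices and $r\ge 3$ arrows the closure of the union of the reachable $g$-vector cones omits a nonempty open cone, and the same happens for wild acyclic quivers of higher rank. So completeness (or density) of the fan is genuinely extra information that does not follow from Theorem \ref{thm:bijection-x}(4); in this paper the reddening sequence is used only to guarantee finite dimensionality of $J(Q_T,W)$ via Proposition \ref{prop:fin dim} (Theorem \ref{thm:fin dim}), not to control the fan. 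Since the density of the fan is exactly what your argument needs to conclude that the cone of the (a priori non-reachable) cluster tilting object $U=X\oplus Y$ coincides with a reachable cone, the proof as written does not go through.

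For comparison, the paper does not prove this statement at all: it quotes it from \cite{Y20}, where it is deduced from the main theorem of that paper, namely the density of the $g$-vector cones of tagged triangulations in $\bR^{T}$ for surfaces other than once-punctured closed surfaces. That density is established there by surface-specific constructions (explicit tagged triangulations adapted to a given integer vector), not by reddening sequences; the only new observation in the present paper is that the finite-dimensionality hypothesis of \cite{Y20} is automatic by Theorem \ref{thm:fin dim}. Your surrounding scaffolding — Hom-finiteness from Theorem \ref{thm:fin dim}, completion of a rigid object to a cluster tilting object (due to Zhou--Zhu's ``maximal rigid equals cluster tilting'' rather than Iyama--Yoshino, a minor attribution point), injectivity of indices on rigid objects, and disjointness of interiors of cones — is indeed the right frame in which the density statement is converted into reachability, but without that density input (or some substitute proved for surfaces) there is a genuine gap. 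Relatedly, the role of the once-punctured closed exclusion is that the density/reachability result of \cite{Y20} fails there (tagged triangulations split into two flip-classes, so notched-type rigid objects are not reachable from $\Gamma$), not merely that a reddening sequence is unavailable.
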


We add one condition to Corollary \ref{cor:analogue2} under the assumption that $\cS$ is not closed surface with exactly one puncture.

\begin{proposition}\label{prop:characterization}
If $\cS$ is not closed surface with exactly one puncture, then the following are equivalent:
\begin{enumerate}
\item $Q_T$ satisfies \eqref{eq:d=dim} for any $T\in\bT_{\cS}$;
\item $Q_T$ is either Dynkin type or a Kronecker quiver for any $T\in\bT_{\cS}$;
\item $\End_{\cC}(X)\simeq K$ for any indecomposable rigid object $X\in\cC$.
\end{enumerate}
\end{proposition}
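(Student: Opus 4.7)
The plan is to prove (1)$\Leftrightarrow$(2) and (1)$\Leftrightarrow$(3) separately, stringing together results already established in the paper. The first equivalence is a direct consequence of Theorem~\ref{thm:d=dim cS}(2), while the second translates the finite-set condition of Theorem~\ref{thm:TFAE final} into a statement about all indecomposable rigid objects of $\cC$ via the bijection $\sX_{T_0}$ and Theorem~\ref{thm:all rigid}.

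For (1)$\Leftrightarrow$(2), I would invoke Theorem~\ref{thm:d=dim cS}(2): excluding the torus with one puncture, (1) holds exactly when $\cS$ is a polygon with at most one puncture or an annulus with exactly two marked points. Example~\ref{ex:quivers from sS} then inspects the quivers arising in these cases and shows that every $Q_T$ is of Dynkin type $A$ or $D$ in the polygon case, respectively a Kronecker quiver in the annulus case, yielding (2). Conversely, on any surface outside the list of Theorem~\ref{thm:d=dim cS}(2), some $Q_T$ is neither Dynkin type nor a Kronecker quiver (the infinite mutation classes and the Markov quiver are both excluded from~(2)), so (2) fails.

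For (1)$\Leftrightarrow$(3), fix an initial $T_0\in\bT_{\cS}$ and a non-degenerate potential $W_0$ of $Q_{T_0}$, and set $\cC=\cC_{Q_{T_0},W_0}$. By Theorem~\ref{thm:transitivity of flips}, every $T\in\bT_{\cS}$ is reached from $T_0$ by a sequence of flips. Iterating the mutation equivalences recalled in Subsection~\ref{subsec:categorification} yields a non-degenerate potential $W$ of $Q_T$ together with a triangle equivalence $\cC_{Q_T,W}\xrightarrow{\sim}\cC$ that identifies $\Gamma_{Q_T,W}e_{\gamma}$ with $\sX_{T_0}(\gamma)$ for each $\gamma\in T$; this compatibility is forced by the construction $\sX_{T_0}=\sC_{\Gamma}^{-1}\circ x_{T_0}$ of~\eqref{eq:sX} together with Theorem~\ref{thm:bijection-sC}(1) and the compatibility of flips with quiver mutations recalled before Table~\ref{table:puzzle pieces to quivers}. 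Consequently
\[
\End_{\cC}(\sX_{T_0}(\gamma))\simeq\End_{\cC_{Q_T,W}}(\Gamma_{Q_T,W}e_{\gamma}),
\]
so Theorem~\ref{thm:TFAE final} applied to each $(T,W)$ translates (1) into the statement that $\End_{\cC}(\sX_{T_0}(\gamma))\simeq K$ for every $\gamma\in\bigcup_{T\in\bT_{\cS}}T=\bA_{\cS}$. Via the bijection $\sX_{T_0}\colon\bA_{\cS}\leftrightarrow\irigid^{\Gamma}\cC$, this is exactly $\End_{\cC}(X)\simeq K$ for all $X\in\irigid^{\Gamma}\cC$, and Theorem~\ref{thm:all rigid} replaces $\irigid^{\Gamma}\cC$ by the full collection of indecomposable rigid objects, yielding (3).

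The only technical point requiring care is the identification $\sX_{T_0}(\gamma)\simeq\Gamma_{Q_T,W}e_{\gamma}$ under the iterated mutation equivalences, but this is essentially bookkeeping once the framework of Subsection~\ref{subsec:categorification} is in place. Everything else is a direct quotation of Theorems~\ref{thm:d=dim cS}, \ref{thm:transitivity of flips}, \ref{thm:TFAE final}, and~\ref{thm:all rigid}.
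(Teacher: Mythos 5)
Your proposal is correct and follows essentially the same route as the paper: for (1)$\Leftrightarrow$(2) the paper simply notes $\bT_{\cS}=\bT_{\cS}^T$ and cites Corollary \ref{cor:analogue2} (which is itself deduced from Theorem \ref{thm:d=dim cS}(2) and Example \ref{ex:quivers from sS}, exactly as you re-derive it), and for (1)$\Leftrightarrow$(3) the paper likewise combines the mutation equivalences of Subsection \ref{subsec:categorification} (giving $\End_{\cC}(\mu\Gamma)\simeq\End_{\cC_{\mu(Q_T,W)}}(\Gamma_{\mu(Q_T,W)})$) with Theorem \ref{thm:TFAE final} and Theorem \ref{thm:all rigid}. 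Your identification of $\sX_{T_0}(\gamma)$ with the image of $\Gamma_{Q_T,W}e_{\gamma}$ is just a more explicit phrasing of that same bookkeeping, so there is no substantive difference.
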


\begin{proof}
By the assumption and Theorem \ref{thm:transitivity of flips}, we have $\bT_{\cS}=\bT_{\cS}^T$. Then the equivalence between (1) and (2) is just Corollary \ref{cor:analogue2}.

For a sequence $\mu$ of mutations, the definition of mutations of $\Gamma$ gives that
\[
\End_{\cC}(\mu\Gamma)\simeq\End_{\cC_{\mu(Q_T,W)}}(\Gamma_{\mu(Q_T,W)}).
\]
Therefore, by Theorem \ref{thm:TFAE final}, $\mu(Q_T)=Q_{\mu T}$ satisfies \eqref{eq:d=dim} if and only if $\End_{\cC_{\mu(Q_T,W)}}(X)\simeq K$ for any indecomposable direct summand $X$ of $\Gamma_{\mu(Q_T,W)}$, which is also equivalent to $\End_{\cC}(Y)\simeq K$ for any indecomposable direct summand $Y$ of $\mu\Gamma$. Thus the equivalence between (1) and (3) follows from Theorem \ref{thm:all rigid}.
\end{proof}

\section{Example}\label{sec:example}

We consider an example which is not covered by \cite{BMR09}. For an annulus with three marked points on the boundary and one puncture, we consider the following tagged triangulation and the associated quiver:
\[
\begin{tikzpicture}[baseline=-6mm,scale=3]
 \node at(-1.4,-0.2){$T=$}; \node at(1.3,-0.2){$,$};
 \coordinate(0)at(0,0); \coordinate(u)at(0,1); \coordinate(c)at(0,0.2); \coordinate(d)at(0,-1);
 \draw[pattern=north east lines](0)circle(2mm); \draw(0,-0.2)circle(12mm);
 \draw[blue](d)..controls(-0.8,0)and(-0.3,0.5)..node[fill=white,inner sep=2,pos=0.3]{$1$}(c);
 \draw[blue](d)..controls(0.8,0)and(0.3,0.5)..node[fill=white,inner sep=2,pos=0.3]{$2$}(c);
 \draw[blue](d)..controls(-1,-1)and(-1,0.6)..(0,0.6);
 \draw[blue](d)..controls(1,-1)and(1,0.6)..(0,0.6)node[fill=white,inner sep=2]{$3$};
 \draw[blue](u)..controls(-1.5,0.5)and(-1.1,-1.5)..node[fill=white,inner sep=2]{$5$}(d);
 \draw[blue](u)..controls(1.5,0.5)and(1.1,-1.5)..node[fill=white,inner sep=2]{$4$}(d);
 \fill(u)circle(0.03); \fill(c)circle (0.03); \fill(d)circle(0.03);
 \draw[->](d)++(120:0.3)arc[radius=0.3,start angle=120,end angle=60]node[above,pos=0.5]{$a_1$};
 \draw[->](d)++(50:0.3)arc[radius=0.3,start angle=50,end angle=15]node[right,pos=0.3]{$a_2$};
 \draw[->](d)++(5:0.3)arc[radius=0.3,start angle=5,end angle=-10]node[right,pos=0.5]{$a_3$};
 \draw[->](d)++(-20:0.3)arc[radius=0.3,start angle=-20,end angle=-160]node[above,pos=0.5]{$a_4$};
 \draw[->](d)++(-170:0.3)arc[radius=0.3,start angle=-170,end angle=-185]node[left,pos=0.5]{$a_5$};
 \draw[->](d)++(-195:0.3)arc[radius=0.3,start angle=-195,end angle=-230]node[left,pos=0.7]{$a_6$};
 \draw[->](c)++(150:0.2)arc[radius=0.2,start angle=150,end angle=30]node[below,pos=0.5]{$b$};
 \draw[->](u)++(-30:0.2)arc[radius=0.2,start angle=-30,end angle=-150]node[above,pos=0.5]{$c$};
\end{tikzpicture}\hspace{-3mm}
Q_T=
\begin{tikzpicture}[baseline=-1mm,scale=1.7]
 \node(3)at(0,0){$3$}; \node(1)at(-1,-1){$1$}; \node(2)at(1,-1){$2$}; \node(4)at(1,1){$4$}; \node(5)at(-1,1){$5$};
 \draw[->](3)--node[fill=white,inner sep=1]{$a_6$}(1); \draw[->](2)--node[fill=white,inner sep=1]{$a_2$}(3);
 \draw[->](3)--node[fill=white,inner sep=1]{$a_3$}(4); \draw[->](5)--node[fill=white,inner sep=1]{$a_5$}(3);
 \draw[transform canvas={yshift=2pt},->](1)--node[above]{$a_1$}(2);
 \draw[transform canvas={yshift=-2pt},->](1)--node[below]{$b$}(2);
 \draw[transform canvas={yshift=2pt},->](4)--node[above]{$c$}(5);
 \draw[transform canvas={yshift=-2pt},->](4)--node[below]{$a_4$}(5);
\end{tikzpicture}.
\]
We take a non-degenerate potential $W_T$ of $Q_T$ given in \cite{Lab09} (for a puncture $p$, we take the parameter $x_p=-1$) as follows:
\[
W_T=ba_6a_2+ca_3a_5-a_6a_5a_4a_3a_2a_1.
\]
Then the associated Jacobian algebra $J(T):=J(Q_T,W_T)$ is the quotient of the path algebra $KQ_T$ by the ideal generated by
\begin{eqnarray*}
&a_6a_2,\ a_3a_5,\ a_6a_5a_4a_3a_2,\ a_3a_2a_1a_6a_5, \\
&ba_6-a_1a_6a_5a_4a_3,\ a_2b-a_5a_4a_3a_2a_1,\ ca_3-a_4a_3a_2a_1a_6,\ a_5c-a_2a_1a_6a_5a_4.
\end{eqnarray*}
The indecomposable projectibe module $J(T)e_3$ is described by
\[
\begin{tikzpicture}[baseline=0mm]
 \node(s)at(0,0){$3$}; \node(t)at(0,-5){$3$}; 
 \path(s)++(-1,-0.5)node(l1){$1$}; \path(l1)++(0,-1)node(l2){$2$}; \path(l2)++(0,-1)node(l3){$3$}; \path(l3)++(0,-1)node(l4){$4$}; \path(l4)++(0,-1)node(l5){$5$};
 \path(s)++(1,-0.5)node(r1){$4$}; \path(r1)++(0,-1)node(r2){$5$}; \path(r2)++(0,-1)node(r3){$3$}; \path(r3)++(0,-1)node(r4){$1$}; \path(r4)++(0,-1)node(r5){$2$};
 \draw[->](s)--node[above,pos=0.6]{$a_6$}(l1); \draw[->](s)--node[above,pos=0.6]{$a_3$}(r1);
 \draw[->](l1)--node[left]{$a_1$}(l2); \draw[->](l2)--node[left]{$a_2$}(l3); \draw[->](l3)--node[left]{$a_3$}(l4); \draw[->](l4)--node[left]{$a_4$}(l5);
 \draw[->](r1)--node[right]{$a_4$}(r2); \draw[->](r2)--node[right]{$a_5$}(r3); \draw[->](r3)--node[right]{$a_6$}(r4); \draw[->](r4)--node[right]{$a_1$}(r5);
 \draw[->](l5)--node[below,pos=0.4]{$a_5$}(t); \draw[->](r5)--node[below,pos=0.4]{$a_2$}(t);
 \draw[->](l1)--node[right,pos=0.2]{$b$}(r5); \draw[->](r1)--node[left,pos=0.2]{$c$}(l5);
\end{tikzpicture}
\]
where all paths from the top vertex to each vertex coincide in $J(T)$ and the paths for all vertices form a basis of $J(T)e_3$. In particular, the dimension vector is given by the numbers of tagged arcs appearing in the diagram, that is, 
\[
\udim J(T)e_3=(2,2,4,2,2).
\]
Therefore, since $\dim\End_{J(T)}(J(T)e_3)=4\neq 1$, $Q_T$ does not satisfy \eqref{eq:d=dim} by Theorem \ref{thm:TFAE final}. Note that the dimension vector is also given by $\Int(T,\rho^{-1}(3))$ in the same way as the proof of Theorem \ref{thm:TFAE final}.

Next, we consider the tagged triangulation $\mu_3T$ and the associated quiver:
\[
\begin{tikzpicture}[baseline=-6mm,scale=3]
 \node at(-1.45,-0.2){$\mu_3T=$}; \node at(1.3,-0.2){$,$};
 \coordinate(0)at(0,0); \coordinate(u)at(0,1); \coordinate(c)at(0,0.2); \coordinate(d)at(0,-1);
 \draw[pattern=north east lines](0)circle(2mm); \draw(0,-0.2)circle(12mm);
 \draw[blue](d)..controls(-0.8,0)and(-0.3,0.5)..node[fill=white,inner sep=2,pos=0.3]{$1$}(c);
 \draw[blue](d)..controls(0.8,0)and(0.3,0.5)..node[fill=white,inner sep=2,pos=0.3]{$2$}(c);
 \draw[blue](c)--node[fill=white,inner sep=2]{$3'$}(u);
 \draw[blue](u)..controls(-1.5,0.5)and(-1.1,-1.5)..node[fill=white,inner sep=2]{$5$}(d);
 \draw[blue](u)..controls(1.5,0.5)and(1.1,-1.5)..node[fill=white,inner sep=2]{$4$}(d);
 \fill(u)circle(0.03); \fill(c)circle (0.03); \fill(d)circle(0.03);
 \draw[->](d)++(120:0.3)arc[radius=0.3,start angle=120,end angle=60]node[above,pos=0.5]{$\alpha_1$};
 \draw[->](d)++(50:0.3)arc[radius=0.3,start angle=50,end angle=-10]node[right,pos=0.5]{$\alpha_2$};
 \draw[->](d)++(-20:0.3)arc[radius=0.3,start angle=-20,end angle=-160]node[above,pos=0.5]{$\alpha_3$};
 \draw[->](d)++(-170:0.3)arc[radius=0.3,start angle=-170,end angle=-230]node[left,pos=0.5]{$\alpha_4$};
 \draw[->](c)++(150:0.2)arc[radius=0.2,start angle=150,end angle=95]node[above,pos=0.3]{$\beta_1$};
 \draw[->](c)++(85:0.2)arc[radius=0.2,start angle=85,end angle=30]node[above,pos=0.7]{$\beta_2$};
 \draw[->](u)++(-30:0.2)arc[radius=0.2,start angle=-30,end angle=-85]node[below,pos=0.3]{$\gamma_1$};
 \draw[->](u)++(-95:0.2)arc[radius=0.2,start angle=-95,end angle=-150]node[below,pos=0.7]{$\gamma_2$};
\end{tikzpicture}\hspace{-3mm}
Q_{\mu_3T}=
\begin{tikzpicture}[baseline=-1mm,scale=1.7]
 \node(3)at(0,0){$3$}; \node(1)at(-1,-1){$1$}; \node(2)at(1,-1){$2$}; \node(4)at(1,1){$4$}; \node(5)at(-1,1){$5$};
 \draw[<-](3)--node[fill=white,inner sep=1]{$\beta_1$}(1); \draw[->](3)--node[fill=white,inner sep=1]{$\beta_2$}(2);
 \draw[<-](3)--node[fill=white,inner sep=1]{$\gamma_1$}(4); \draw[->](3)--node[fill=white,inner sep=1]{$\gamma_2$}(5);
 \draw[->](1)--node[fill=white,inner sep=1]{$\alpha_1$}(2); \draw[->](2)--node[fill=white,inner sep=1]{$\alpha_2$}(4);
 \draw[->](4)--node[fill=white,inner sep=1]{$\alpha_3$}(5); \draw[->](5)--node[fill=white,inner sep=1]{$\alpha_4$}(1);
\end{tikzpicture}.
\]
We take a non-degenerate potential $W_{\mu_3T}$ of $Q_{\mu_3T}$ as follows:
\[
W_{\mu_3T}=\gamma_2\beta_1\alpha_4+\beta_2\gamma_1\alpha_2+\alpha_4\alpha_3\alpha_2\alpha_1.
\]
Note that the quivers with potential $(Q_{\mu_3T},W_{\mu_3T})$ and $\mu_3(Q_T,W_T)$ are right-equivalent \cite[Theorem 30]{Lab09}. Then the associated Jacobian algebra $J(\mu_3T):=J(Q_{\mu_3T},W_{\mu_3T})$ is the quotient of the path algebra $KQ_{\mu_3T}$ by the ideal generated by
\begin{eqnarray*}
&\alpha_4\gamma_2,\ \beta_1\alpha_4,\ \alpha_2\beta_2,\ \gamma_1\alpha_2,\ \alpha_4\alpha_3\alpha_2,\ \alpha_2\alpha_1\alpha_4\\
&\gamma_2\beta_1-\alpha_3\alpha_2\alpha_1,\ \beta_2\gamma_1-\alpha_1\alpha_4\alpha_3.
\end{eqnarray*}
All indecomposable projectibe $J(\mu_3T)$-modules are described as follows:
\[
\begin{tikzpicture}[baseline=0mm]
 \node(s)at(0,0){$1$};
 \path(s)++(-0.5,-1)node(l1){$2$}; \path(l1)++(0,-1)node(l2){$4$}; \path(l2)++(0,-1)node(l3){$5$};
 \path(s)++(0.5,-1)node(r1){$3$}; \path(r1)++(0,-1)node(r2){$2$};
 \draw[->](s)--node[left,pos=0.3]{$\alpha_1$}(l1); \draw[->](s)--node[right,pos=0.3]{$\beta_1$}(r1);
 \draw[->](l1)--node[left]{$\alpha_2$}(l2); \draw[->](l2)--node[left]{$\alpha_3$}(l3);
 \draw[->](r1)--node[right]{$\beta_2$}(r2);
 \draw[->](r1)--node[fill=white,inner sep=1]{$\gamma_2$}(l3);
\end{tikzpicture}
\hspace{5mm}
\begin{tikzpicture}[baseline=0mm]
 \node(s)at(0,0){$2$}; 
 \path(s)++(0,-1)node(l1){$4$}; \path(l1)++(0,-1)node(l2){$5$};
 \draw[->](s)--node[right]{$\alpha_2$}(l1); \draw[->](l1)--node[right]{$\alpha_3$}(l2);
\end{tikzpicture}
\hspace{5mm}
\begin{tikzpicture}[baseline=0mm]
 \node(s)at(0,0){$3$};
 \path(s)++(-0.5,-1)node(l1){$2$}; \path(s)++(0.5,-1)node(r1){$5$};
 \draw[->](s)--node[left,pos=0.3]{$\beta_2$}(l1); \draw[->](s)--node[right,pos=0.3]{$\gamma_2$}(r1);
\end{tikzpicture}
\hspace{5mm}
\begin{tikzpicture}[baseline=0mm]
 \node(s)at(0,0){$4$};
 \path(s)++(-0.5,-1)node(l1){$5$}; \path(l1)++(0,-1)node(l2){$1$}; \path(l2)++(0,-1)node(l3){$2$};
 \path(s)++(0.5,-1)node(r1){$3$}; \path(r1)++(0,-1)node(r2){$5$};
 \draw[->](s)--node[left,pos=0.3]{$\alpha_3$}(l1); \draw[->](s)--node[right,pos=0.3]{$\gamma_1$}(r1);
 \draw[->](l1)--node[left]{$\alpha_4$}(l2); \draw[->](l2)--node[left]{$\alpha_1$}(l3);
 \draw[->](r1)--node[right]{$\gamma_2$}(r2);
 \draw[->](r1)--node[fill=white,inner sep=1]{$\beta_2$}(l3);
\end{tikzpicture}
\hspace{5mm}
\begin{tikzpicture}[baseline=0mm]
 \node(s)at(0,0){$5$}; 
 \path(s)++(0,-1)node(l1){$1$}; \path(l1)++(0,-1)node(l2){$2$};
 \draw[->](s)--node[right]{$\alpha_4$}(l1); \draw[->](l1)--node[right]{$\alpha_1$}(l2);
\end{tikzpicture}
\]
We can see that $\dim\End(J(\mu_3T)e_i)=1$ for any $1 \le i \le 5$ and $Q_{\mu_3T}$ satisfies \eqref{eq:d=dim} by Theorem \ref{thm:TFAE final}.

\medskip\noindent{\bf Acknowledgements}.
The author would like to thank Pierre-Guy Plamondon for helpful comments on Proposition \ref{prop:fin dim}.
He was JSPS Overseas Research Fellow and supported by JSPS KAKENHI Grant Numbers JP20J00410, JP21K13761.

\bibliographystyle{alpha}
\bibliography{d=dim}

\end{document}